\documentclass[11]{amsart}
\usepackage{amsaddr}



\newcommand{\Ell}{\tfs{L}}

\newcommand{\cont}{\tfs{C}}

\newcommand{\Hom}{{\tfs{Hom}}}
\newcommand{\Ker}{\tfs{Ker}\,}

\newcommand{\id}{\tfs{id}}
\newcommand{\po}{{\tfs{po}}}
\newcommand{\funlattice}{{\mathcal F}}

\newcommand{\zerospace}{\{0\}}

\newcommand{\Cstar}{\ensuremath{\tfs{C}^\ast}}
\newcommand{\Calgebra}{\Cstar\!-algebra}

\newcommand{\homo}{{\mathcalalt H}}
\newcommand{\homon}{\homo_n}
\newcommand{\incr}{{\tfs{i}}}
\newcommand{\funincrn}{{\mathcalalt F}_n^\incr}
\newcommand{\ci}{{\mathcalalt I}}
\newcommand{\cia}{\ci^{\tfs{a}}}
\newcommand{\cic}{\ci^{\tfs{c}}}

\newcommand{\ct}{{\mathcalalt C}}
\newcommand{\ccond}{{\mathcalalt D}}

\newcommand{\ol}[1]{\overline{#1}}
\newcommand{\fac}[1]{\overline{#1}}

\newcommand{\Fun}{\tfs{Fun}\hskip 0.07em}

\newcommand{\fact}[1]{{\overline{#1}}}

\newcommand{\pospartofset}[1]{#1_+}
\newcommand{\normdualofbs}[1]{#1^\ast}
\newcommand{\algdualofvs}[1]{#1^\sharp}

\newcommand{\vs}{V}
\newcommand{\vstwo}{W}
\newcommand{\ls}{L}
\newcommand{\vspos}{\pospartofset{\vs}}
\newcommand{\povs}{(\vs,\vspos)}
\newcommand{\povsclosure}{(\vs,\overline{\vspos}^{\sigma(\vs,\lsdual)})}
\newcommand{\vsdual}{\algdualofvs{\vs}}

\newcommand{\lsdual}{\algdualofvs{\ls}}
\newcommand{\lsdualpos}{\pospartofset{\lsdual}}

\newcommand{\vl}{E}
\newcommand{\vltwo}{F}
\newcommand{\vlpos}{\pospartofset{\vl}}

\newcommand{\bl}{E}
\newcommand{\blpos}{\pospartofset{\bl}}

\newcommand{\bldual}{\normdualofbs{\bl}}

\newcommand{\bltwo}{F}

\newcommand{\ball}{{\tfs{B}}}
\newcommand{\ballbs}{\ball_{\bs}}
\newcommand{\ballbsdual}{\ball_{\bsdual}}
\newcommand{\ballbsdualpos}{\left(\ball_{\bsdual}\right)_+}	

\newcommand{\bs}{X}
\newcommand{\bspos}{\pospartofset{\bs}}
\newcommand{\bsdual}{\normdualofbs{\bs}}
\newcommand{\bsdualpos}{\bsdual_+}

\newcommand{\pobs}{(\bs,\bspos)}
\newcommand{\pobsclosure}{(\bs,\overline{\bspos})}

\newcommand{\FVL}{{\tfs{FVL}}}
\newcommand{\FVLpo}[1]{\FVL[(#1,\pospartofset{#1})]}
\newcommand{\FVLpovs}{\FVLpo{\vs}}
\newcommand{\FVLpobs}{\FVLpo{\bs}}
\newcommand{\FVLpovsclosure}{\FVL[(\vs,\overline{\vspos}^{\sigma(\vs,\lsdual)})]}
\newcommand{\FVLpobsclosure}{\FVL[(\bs,\overline{\bspos})]}	
\newcommand{\FBL}{{\tfs{FBL}}}
\newcommand{\FBLC}{\FBL^\ct}
\newcommand{\FBLCpo}[1]{\FBLC[(#1,\pospartofset{#1})]}
\newcommand{\FBLCpobs}{\FBLCpo{\bs}}
\newcommand{\FBLp}{\FBL^{(p)}}
\newcommand{\FBLppo}[1]{\FBLp[(#1,\pospartofset{#1})]}
\newcommand{\FBLppobs}{\FBLppo{\bs}}
\newcommand{\FBLinf}{\FBL^{(\infty)}}
\newcommand{\FBLinfpo}[1]{\FBLinf[(#1,\pospartofset{#1})]}
\newcommand{\FBLinfpobs}{\FBLinfpo{\bs}}

\newcommand{\conth}{\cont_{\tfs {ph}}}
\newcommand{\conthp}{\conth\sp p}
\newcommand{\contballposh}{\conth(\ballbsdualpos)}
\newcommand{\contballposhp}{\conthp(\ballbsdualpos)}

\newcommand{\freeelt}{\mathfrak f}

\newcommand{\mor}{\varphi}
\newcommand{\mortwo}{\psi}

\newcommand{\facmor}{\ol{\mor}}
\newcommand{\functor}{U}

\newcommand{\catfont}[1]{\textup{\fontfamily{qag}\selectfont{\footnotesize #1}}}
\newcommand{\catfontscript}[1]{\textup{\fontfamily{qag}\selectfont{\tiny{#1}}}}

\newcommand{\catsymbol}{\catfont{Cat}}
\newcommand{\catone}{\catsymbol_1}
\newcommand{\cattwo}{\catsymbol_2}

\newcommand{\catsymbolscript}{\catfontscript{Cat}}
\newcommand{\catonescript}{{\catsymbolscript}_1}
\newcommand{\cattwoscript}{{\catsymbolscript}_2}


\newcommand{\pset}{\Omega}
\newcommand{\pt}{\omega}


\newcommand{\freeletter}{\tfs{F}}
\newcommand{\Free}[3]{\freeletter_{#1}^{#2}\!\left[#3\right]}
\newcommand{\FCATONECATTWO}[1]{\Free{\catonescript}{\cattwoscript}{#1}}







\usepackage[english]{babel} 
\usepackage[UKenglish]{datetime}


\usepackage{amscd}
\usepackage{amsfonts}
\usepackage{amsmath}
\usepackage{amssymb}
\usepackage{amsthm}
\usepackage{thmtools} 
\usepackage{cases}
\usepackage{chngcntr}
\usepackage{cite}
\usepackage{color}
\usepackage{graphicx}
\usepackage{mathtools}
\usepackage{microtype}
\usepackage[active]{srcltx}
\usepackage{tikz}
\usepackage{tikz-cd}
\usepackage{url}
\usepackage{verbatim} 

\usepackage{tikz,tikz-cd}
\usetikzlibrary{matrix, arrows}


\usepackage{hyperref}
\usepackage{enumitem}
\usepackage{cleveref}





\newcommand*\patchAmsMathEnvironmentForLineno[1]{%
  \expandafter\let\csname old#1\expandafter\endcsname\csname #1\endcsname
  \expandafter\let\csname oldend#1\expandafter\endcsname\csname end#1\endcsname
  \renewenvironment{#1}%
     {\linenomath\csname old#1\endcsname}%
     {\csname oldend#1\endcsname\endlinenomath}}%
\newcommand*\patchBothAmsMathEnvironmentsForLineno[1]{%
  \patchAmsMathEnvironmentForLineno{#1}%
  \patchAmsMathEnvironmentForLineno{#1*}}%
\AtBeginDocument{%
\patchBothAmsMathEnvironmentsForLineno{equation}%
\patchBothAmsMathEnvironmentsForLineno{align}%
\patchBothAmsMathEnvironmentsForLineno{flalign}%
\patchBothAmsMathEnvironmentsForLineno{alignat}%
\patchBothAmsMathEnvironmentsForLineno{gather}%
\patchBothAmsMathEnvironmentsForLineno{multline}%
}



\usepackage[mathlines]{lineno}



\usepackage[mathscr]{euscript}
\usepackage{calrsfs}
\DeclareMathAlphabet{\mathcalalt}{OMS}{cmsy}{m}{n}
\usepackage[T1]{fontenc}
\usepackage[charter]{mathdesign}
\usepackage{bbm} 
\newcommand{\bbfont}{\mathbbm}



\usepackage{calrsfs}

\DeclareMathAlphabet{\mathcalalt}{OMS}{cmsy}{m}{n}
\DeclareMathAlphabet{\mathcalaltbf}{OMS}{cmsy}{b}{n}

\usepackage[mathscr]{euscript}




\usepackage{ifthen}

\makeatletter
\newcommand{\tfs}[1]
{
	\ifthenelse{\equal{\f@shape}{n}}{\ensuremath{\mathrm{#1}}}
	{\ifthenelse{\equal{\f@shape}{sc}}{\ensuremath{\mathrm{#1}}}
		{\ifthenelse{\equal{\f@shape}{it}}{\ensuremath{\mathit{#1}}}
			{\ifthenelse{\equal{\f@shape}{sl}}{\ensuremath{\mathit{#1}}}{}	
			}
		}
	}
}
\makeatother



\makeatletter
\newcommand{\btfs}[1]
{
	\ifthenelse{\equal{\f@shape}{n}}{\ensuremath{\mathrm{#1}}}
	{\ifthenelse{\equal{\f@shape}{sc}}{\ensuremath{\mathrm{#1}}}
		{\ifthenelse{\equal{\f@shape}{it}}{\ensuremath{\mathit{#1}}}
			{\ifthenelse{\equal{\f@shape}{sl}}{\ensuremath{\mathit{#1}}}{}	
			}
		}
	}
}
\makeatother




\newcommand{\NN}{{\bbfont N}}

\newcommand{\RR}{{\bbfont R}}


\newcommand{\ulpar}{{\textup{(}}}
\newcommand{\urpar}{{\textup{)}}}

\newcommand{\uppars}[1]{\ulpar #1\urpar}


\newcommand{\abs}[1]{{\lvert #1 \rvert}}
\newcommand{\norm}[1]{{\lVert #1 \rVert}}

\newcommand{\lrabs}[1]{{\left\lvert #1 \right\rvert}}
\newcommand{\lrnorm}[1]{{\left\lVert #1 \right\rVert}}

\newcommand{\Bignorm}[1]{{\Big\lVert #1 \Big\rVert}}


\theoremstyle{plain}

\newtheorem{theorem}{Theorem}[section]
\newtheorem{proposition}[theorem]{Proposition}
\newtheorem{lemma}[theorem]{Lemma}
\newtheorem{corollary}[theorem]{Corollary}

\newtheorem*{theorem*}{Theorem}
\newtheorem*{proposition*}{Proposition}
\newtheorem*{lemma*}{Lemma}
\newtheorem*{corollary*}{Corollary}

\theoremstyle{definition}

\newtheorem{definition}[theorem]{Definition}
\newtheorem{example}[theorem]{Example}
\newtheorem{remark}[theorem]{Remark}

\newtheorem*{definition*}{Definition}
\newtheorem*{example*}{Example}
\newtheorem*{remark*}{Remark}



\crefname{theorem}{Theorem}{Theorems}
\crefname{proposition}{Proposition}{Propositions}
\crefname{lemma}{Lemma}{Lemmas}
\crefname{corollary}{Corollary}{Corollaries}
\crefname{definition}{Definition}{Definitions}
\crefname{example}{Example}{Examples}
\crefname{remark}{Remark}{Remarks}

\crefname{section}{Section}{Sections}
\crefname{subsection}{Section}{Sections}
\crefname{subsubsection}{Section}{Sections}

\crefname{equation}{equation}{equations}
\crefname{enumi}{part}{parts}
\crefname{enumii}{part}{parts}
\crefname{enumiii}{part}{parts}
\crefname{enumiv}{part}{parts}



\numberwithin{equation}{section}

\allowdisplaybreaks

\urlstyle{same}

\usepackage[nobysame]{amsrefs}

\AtBeginDocument{\def\MR#1{}}


\begin{document}


\title [Free Banach lattices]{Free Banach lattices over pre-ordered Banach spaces}

\author{Marcel de Jeu}
\address[Marcel de Jeu]{Mathematical Institute, Leiden University, P.O.\ Box 9512, 2300 RA Leiden, The Netherlands\\
	and\\
	Department of Mathematics and Applied Mathematics, University of Pretoria, Corner of Lynnwood Road and Roper Street, Hatfield 0083, Pretoria,
	South Africa}
\email[Marcel de Jeu]{mdejeu@math.leidenuniv.nl}

\author{Xingni Jiang}
\address[Xingni Jiang]{College of Mathematics, Sichuan University, No.\ 24, South Section, First Ring Road, Chengdu, P.R.\ China}
\email[Xingni Jiang]{x.jiang@scu.edu.cn}

\subjclass[2010]{Primary 46B42, 46A40; Secondary 06F20, 47B60}

\keywords{Pre-ordered vector space, pre-ordered Banach space, free vector lattice, free Banach lattice, convexity}

\begin{abstract}We study free Banach lattices over pre-ordered Banach spaces in the category of Banach lattices of a given convexity type. These generalise the free Banach lattices under convexity conditions over Banach spaces in the literature. Their existence is shown from the existence of free vector lattices over pre-ordered vector spaces, which are also investigated.
\newline\indent We determine when the positive contraction from the pre-ordered Banach space into the free Banach lattice is injective or bipositive, and when it has closed range. It is a bipositive embedding with closed range if and only if the positive wedge of the space is a closed normal cone. Even for a Banach lattice it can be non-isometric.
\newline\indent 
By analysing the norm of the free $p$-convex Banach lattice with convexity constant 1 over a pre-ordered Banach space, it becomes clear that it can be realised as a function lattice on the positive part of the dual unit ball. This generalises the known realisation for a free Banach lattice of that type over a Banach space.
\newline\indent
As a preparation for this analysis of the norm, characterisations of $p$-convex Banach lattices in terms of vector lattice homomorphisms into $\mathrm{L}_p(\mu)$-spaces for probability measures $\mu$ are given.
\end{abstract}

\maketitle


\section{Introduction and overview}\label{sec:introduction_and_overview}

\noindent The notion of the free Banach lattice over a set was introduced by de Pagter and Wickstead in \cite{de_pagter_wickstead:2015}. It was followed by the definition of the free Banach lattice $\FBL[\bs]$ over a Banach space $\bs$ by Avil\'es, Rodr\'{\i}guez, and Tradacete in \cite{aviles_rodriguez_tradacete:2018};  simplifications in its construction were pointed out by Troitsky in \cite{troitsky:2019}. The seminal paper \cite{aviles_rodriguez_tradacete:2018} was the starting point of an active line of research. The existence and properties of $\FBL[\bs]$ are not only of interest in itself, but the isometric embedding of $\bs$ into it also makes a better understanding possible of the relation between Banach spaces and Banach lattices.  For a comprehensive treatment and additional references, we refer to \cite{oikhberg_taylor_tradacete_troitsky:2024}.

In the present paper, we introduce pre-ordering to the field. For a pre-ordered Banach space $\bs$ with (not necessarily closed) positive wedge $\bspos$ and a given convexity type $\ct$ (a notion introduced in the paper), we define and study the free Banach lattice $\FBLCpobs$ over $\pobs$.  When taking $\ct=\emptyset$, our $\FBL^{\emptyset}[(\bs,\zerospace)]$ is $\FBL[X]$ from \cite{aviles_rodriguez_tradacete:2018}; the free Banach lattices $\FBL^{\ccond}[X]$ under convexity conditions from \cite{jardon-sanchez_laustsen_taylor_tradacete_troitsky:2022} all occur as an $\FBLC[(\bs,\zerospace]$ in the present paper.

A part of the definition of $\FBLCpobs$ consists of a positive contraction $j\colon\bs\to\FBLCpobs$. It need not be injective (it can be the zero map into the zero lattice) or bipositive, nor need it have closed range. When $\bspos$ is a closed normal cone, however, it is a bipositive injection with closed range; the converse is also true. This embedding opens the way for a better understanding of the relation between such ordered Banach spaces and Banach lattices. It is not true that $j$ is always isometric when $\bspos$ is a closed normal cone. It is isometric when $\bspos=\zerospace$ (as in \cites{aviles_rodriguez_tradacete:2018,jardon-sanchez_laustsen_taylor_tradacete_troitsky:2022}), and also when $\bs$ itself is what we shall call a $\ct$-convex Banach lattice. However, even for $\ell_1^2$ it can already fail to be isometric.\footnote{See \cref{ex:not_an_isometry}.}

This paper is organised as follows.

\cref{sec:preliminaries} contains the necessary preliminaries.

\cref{sec:convexity} is concerned with convexity in Banach lattices. In its first part, a characterisation of $p$-convex Banach lattices is obtained in terms of vector lattice homomorphisms into $\Ell_p(\mu)$-spaces for probability measures $\mu$. This is instrumental to \cref{sec:realisation}. In its second part, the notion of a convexity type is introduced, generalising that of a convexity condition in \cite{jardon-sanchez_laustsen_taylor_tradacete_troitsky:2022}. If, for a Banach lattice, there is a norming family of contractive vector lattice homomorphisms into Banach lattices that are all of the same convexity type, then so is the domain lattice.

\cref{sec:free_vector_lattices_over_pre-ordered_vector_spaces} introduces the free vector lattice $\FVLpovs$ over a pre-ordered vector space $\vs$ with positive wedge $\vspos$. It is shown that it exists, and that it can canonically be realised as a function lattice on the positive part of any separating linear subspace $L^\sharp$ of the dual of $\vs$ such that $\vspos$ is $\sigma(V,L^\sharp)$-closed. This is then applied to a pre-ordered Banach space; it also yields an extension of the lattice-linear functional calculus.

In \cref{sec:free_banach_lattices_over_pre-ordered_banach_spaces}, we give a definition, as stipulated by category theory, of the free Banach lattice $\FBLCpobs$ of a given convexity type $\ct$ over a pre-ordered Banach space $\pobs$. With the existence of the free vector lattice over the pre-ordered vector space $\pobs$ as a starting point, it is not too difficult to show that it exists. This differs from the constructions in \cites{jardon-sanchez_laustsen_taylor_tradacete_troitsky:2022,oikhberg_taylor_tradacete_troitsky:2024,troitsky:2019}, where the starting point is a vector lattice of functions on the dual unit ball. Our approach simplifies the notation, and it also appears to be more in line with similar constructions elsewhere. In \cite[Proposition~2.7.1]{dixmier_C-STAR-ALGEBRAS_ENGLISH_NORTH_HOLLAND_EDITION:1977}), for example, the enveloping \Calgebra\ of an involutive Banach algebra is similarly constructed, by first taking a universal object in an algebraic category, and then using the universal property of the sought analytic object to `dress this up'. The same is done in \cite{van_amstel_THESIS:2023} when constructing Banach lattice algebras with certain universal properties. The starting point is then a free vector lattice algebra. There is no concrete model of this available{\textemdash}there appears to be little hope for this{\textemdash}but this is also not necessary for the construction to proceed.

In the construction of the free Banach lattices under convexity conditions in \cite{jardon-sanchez_laustsen_taylor_tradacete_troitsky:2022}, an investigation is made, building on \cite{lausten_troitsky:2020}, of the positively homogeneous functional calculus and its continuity properties for suitable, not necessarily uniformly complete, vector lattices. As our construction shows, this can be circumvented.

After investigating basic properties of $\FBLCpobs$, and notably those of the positive contraction from $\bs$ into it, \cref{sec:free_banach_lattices_over_pre-ordered_banach_spaces} concludes with a comparison of the definitions of free Banach lattices in \cite{aviles_rodriguez_tradacete:2018} and in the present paper. It is shown that they coincide when $\bspos=\zerospace$ and $\ct=\emptyset$, but also that the obvious adaptation of the definition in \cite{aviles_rodriguez_tradacete:2018} to the general case is not the correct one to work with.

In \cref{sec:realisation}, it is shown that the free $p$-convex Banach lattice with $p$-convexity constant 1 over $\pobs$ can be realised as a vector lattice of positively homogeneous continuous functions on the positive part of the dual unit ball. When $\bspos=\zerospace$, one retrieves the realisations in \cites{aviles_rodriguez_tradacete:2018, jardon-sanchez_laustsen_taylor_tradacete_troitsky:2022}. Our approach is fundamentally different from that in \cites{aviles_rodriguez_tradacete:2018, jardon-sanchez_laustsen_taylor_tradacete_troitsky:2022}, where the proof of the existence of the free Banach lattice is given in \cite{aviles_rodriguez_tradacete:2018}, and redone in \cite{jardon-sanchez_laustsen_taylor_tradacete_troitsky:2022}, by `simply' showing that the proposed function lattice has the required properties. We start from the free Banach lattice, which we know to exist, and analyse its norm to the extent where it becomes obvious that it has a realisation as a function lattice on the positive part of the dual unit ball. One could say that in the present paper these realisations are \emph{derived}, and that in \cites{aviles_rodriguez_tradacete:2018, jardon-sanchez_laustsen_taylor_tradacete_troitsky:2022} they are \emph{found}.

We conclude with two remarks on notation. Firstly, compared to many papers on free Banach lattices, our canonical choice of letters is reversed: $\bs$ is a Banach space and $\bl$ and $\bltwo$ are Banach lattices or vector lattices. We believe that this reflects the notation in most of the existing textbooks on Banach lattices and vector lattices, and apologise for any inconvenience this may cause. Secondly, we have consistently used Greek letters for positive contractions and contractive vector lattice homomorphisms, rather than the usual operator letters. This fits better into our categorical approach, where these maps are the morphisms we work with.

\section{preliminaries}\label{sec:preliminaries}

\noindent In this section, we collect the necessary notations, definitions, and conventions.

All vector spaces are over the real numbers. When $\vs$ is a vector space, we let  $\vs^\sharp$ denotes its dual of all linear maps from $\vs$ to $\RR$.

A vector lattice need not be Archimedean. If $S$ is a non-empty set, then $\Fun(S)$ is the vector lattice of all real-valued functions on $S$.

If $\vs$ is a vector space, then a \emph{wedge} in $\vs$ is a non-empty subset $\vspos$ such that $\alpha_1x_1+\alpha_2x_2\in \vspos$ for all $\alpha_1,\alpha_2\geq 0$ and $x_1,x_2\in \vspos$.
A wedge $\vspos$ is a \emph{cone} when $\vspos\cap (-\vspos)=\zerospace$. For $x,y\in\vs$, we write $x\leq y$ or $y\geq x$ when $y-x\in\vspos$. As is well known, this gives a correspondence between pre-orderings on $\vs$ and wedges in $\vs$, where the partial orderings correspond to the cones. A \emph{pre-ordered vector space} is a pair $\povs$, where $\vspos$ is a wedge in the vector space $\vs$. If $\povs$ and  $(\vstwo,\pospartofset{\vstwo})$ are pre-ordered vector spaces, then a linear map $\mor\colon\vs\to\vstwo$ is \emph{positive} when $\mor(\vspos)\subseteq\pospartofset{\vstwo}$; when $\vspos=\zerospace$, these are just the linear maps. If, for $x\in\vs$, $\mor(x)\in\pospartofset{\vstwo}$ if and only if $x\in\vspos$, then $\mor$ is \emph{bipositive}. For a non-empty subset $S^\sharp$ of $\vs^\sharp$, we let $\pospartofset{S^\sharp}$ denote the positive elements of $S^\sharp$. If $\vspos=\zerospace$, then $\pospartofset{S^\sharp}=S^\sharp$.

If $\bs$ is a Banach space, then we let $\ballbs$ denote its unit ball and $\bs^*$ its dual. A \emph{pre-ordered Banach space} is a pair $\pobs$, where $\bspos$ is a (not necessarily closed) wedge in the Banach space $\bs$.
The positive contractive functionals on $\bs$, $\ballbsdualpos$, will play an important part below.
When $\bspos=\zerospace$, $\ballbsdualpos=\ballbsdual$.

We supply $\ballbsdualpos$ with the relative weak*-topology, making it into a compact Hausdorff space, and let $\contballposh$ denote the continuous, positively homogeneous, real-valued functions on it.\footnote{The positive homogeneity of a function $f$ on $\ballbsdualpos$  is to be interpreted as $f(tx^\ast)=tf(x^\ast)$ whenever $t\geq 0$ and $x^\ast, tx^\ast\in\ballbsdualpos$.}
For $x\in\bs$, we define $\delta_x\in\contballposh$ by setting
\begin{equation}\label{eq:delta_definition}
\delta_x(x^*)\coloneqq x^*(x)
\end{equation}
for $x^*\in\ballbsdualpos$. If $x\in\bspos$, then $\delta_x\geq 0$. When $\bspos=\zerospace$, $\delta_x$ is defined on $\ballbsdual$.

To conclude, we recall the following definition of free objects from  \cite[Definition~8.22]{adamek_herrlich_strecker_ABSTRACT_AND_CONCRETE_CATEGORIES_THE_JOY_OF_CATS:2006}.

\begin{definition}\label{def:free_object}
	Suppose that $\catone$ and $\cattwo$ are categories, and that $\functor\colon\cattwo\mapsto\catone$ is a faithful functor.\footnote{Recall that $\functor$ is \emph{faithful} when the associated map $\functor\colon\Hom_{\cattwoscript}(O_2,O_2^\prime)\to\Hom_{\catonescript}(\functor(O_2),\functor(O_2^\prime))$ is injective for all objects $O_2,O_2^\prime$ of $\cattwo$. This will always be the case in the present paper.}. Take an object $O_1$ of $\catone$. A \emph{free object over $O_1$ of $\cattwo$ with respect to $\functor$} is a pair $(j,\FCATONECATTWO{O_1})$, where $\FCATONECATTWO{O_1}$ is an object of $\cattwo$ and $j:O_1\to\functor\left(\FCATONECATTWO{O_1}\right)$ is a morphism of $\catone$,
	with the property that, for every object $O_2$ of $\cattwo$ and every morphism $\mor:O_1\to \functor(O_2)$ of $\catone$, there exists a unique morphism  $\facmor: \FCATONECATTWO{O_1}\to O_2$ of $\cattwo$ such that the diagram
	\begin{equation*}\label{dia:free_object}
		\begin{tikzcd}
			O_1\arrow[r, "j"]\arrow[dr, "\mor", swap]& \functor\left(\FCATONECATTWO{O_1}\right)\arrow[d, "\functor(\ol{\mor})"]
			\\ & \functor(O_2)
		\end{tikzcd}
	\end{equation*}
	in $\catone$ is commutative.
\end{definition}

A free object $(j,\FCATONECATTWO{O_1})$ as in \cref{def:free_object} need not exist. However, if it exists, and if  $(j^\prime,\FCATONECATTWO{O_1}^\prime)$ is another such pair, then a standard argument shows that the unique morphism  $\ol{j^\prime}:\FCATONECATTWO{O_1}\to\FCATONECATTWO{O_1}^\prime$ of $\cattwo$ such that $\functor(\ol{j^\prime})\circ j=j^\prime$ is, in fact, an isomorphism with as its inverse the unique morphism  $\ol{j}:\FCATONECATTWO{O_1}^\prime\to\FCATONECATTWO{O_1}$ of $\cattwo$ such that $\functor(\ol{j})\circ j^\prime=j$. In particular, a free object over $O_1$ of $\cattwo$ with respect to $\functor$, if it exists, is determined up to an isomorphism of $\cattwo$.  In the case of existence we shall often simply speak of `the' free object $\FCATONECATTWO{O_1}$ over $O_1$ of $\cattwo$, the accompanying morphism $j$ being tacitly understood from the context.

In the current paper, we are concerned with two contexts for \cref{def:free_object}, an algebraic and an analytic one. The results in the algebraic context are the stepping stone for those in the analytic context.

\section{Convexity}\label{sec:convexity}

\noindent In this section, we collect the material on convexity that will be needed in the sequel. For the positively homogeneous functional calculus for Banach lattices that is used in it, we refer to \cite[Section~1.d]{lindenstrauss_tzafriri_CLASSICAL_BANACH_SPACES_VOLUME_II_FUNCTION_SPACES:1979}. It will be used repeatedly that it is compatible with vector lattice homomorphisms; this well-known fact follows from the uniqueness statement in  \cite[Theorem~1.d.1]{lindenstrauss_tzafriri_CLASSICAL_BANACH_SPACES_VOLUME_II_FUNCTION_SPACES:1979}.

\subsection{p-convexity} \cref{sec:realisation} is concerned with free $p$-convex Banach lattices over pre-ordered Banach spaces. As a preparation for this, we recall the relevant definitions from \cite[Section~1.d]{lindenstrauss_tzafriri_CLASSICAL_BANACH_SPACES_VOLUME_II_FUNCTION_SPACES:1979}, and give new  characterisations of $p$-convex Banach lattices in terms of contractive vector lattice homomorphisms into $\Ell_p(\mu)$-spaces for probability measures $\mu$.

Let $M\geq 1$. For $1\leq p<\infty$, a Banach lattice $\bl$ is said to be \emph{$p$-convex with $p$-convexity constant at most $M$} when
\[
\lrnorm{\left(\sum_{i=1}^n\abs{x_i}^p\right)^{1/p}}\leq M \left(\sum_{i=1}^n\norm{x_i}^p\right)^{1/p}
\]
for $x_1,\dotsc,x_n\in\bl$. The smallest such $M$ is called the $p$-convexity constant $M^{(p)}(\bl)$ of $\bl$. All $\Ell_p$-spaces are $p$-convex with $p$-convexity constant 1. All Banach lattices are 1-convex with 1-convexity constant 1.

A Banach lattice $\bl$ is said to be \emph{$\infty$-convex with $\infty$-convexity constant at most $M$} when
\[
\lrnorm{\bigvee_{i=1}^n\abs{x_i}}\leq M \bigvee_{i=1}^n\norm{x_i}
\]
for $x_1,\dotsc,x_n\in\bl$. The smallest such $M$ is the $\infty$-convexity constant $M^{(\infty)}(\bl)$ of $\bl$.

Some parts of the proofs of the following characterisations of $p$-convexity for $1\leq p\leq\infty$ are inspired by ideas in the proof of \cite[Theorem~6.1]{jardon-sanchez_laustsen_taylor_tradacete_troitsky:2022}.

We start with $p=1$.

\begin{proposition}\label{res:characterisation_1}
	Let $\bl$ be a Banach lattice. Then, for $x\in\bl$,
	\[
	\norm{x}=\max_{\mor}\norm{\mor(x)},
	\]
	where $\mor$ runs over all contractive vector lattice homomorphisms $\mor\colon\bl\to\Ell_1(\mu_\mor)$ for probability measures $\mu_\mor$.
\end{proposition}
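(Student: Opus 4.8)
The plan is to prove the nontrivial inequality $\norm x \le \max_\mor\norm{\mor(x)}$ by constructing a single contractive vector lattice homomorphism into an $\Ell_1$-space over a \emph{probability} measure that attains $\norm x$; the reverse inequality is automatic, since every contractive $\mor$ satisfies $\norm{\mor(x)}\le\norm x$, and exhibiting a maximiser simultaneously upgrades the supremum to a maximum. By positive homogeneity I would first reduce to $\norm x=1$, and since $\norm{\abs x}=\norm x$ and lattice homomorphisms commute with $\abs{\cdot}$, it suffices to treat the positive element $u\coloneqq\abs x$.

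First I would produce a positive norming functional. Weak*-compactness of the dual ball yields $x^*\in\dual\bl$ with $\norm{x^*}\le 1$ and $x^*(u)=\norm u$, and I would then replace it by its modulus $\phi\coloneqq\abs{x^*}$ in the Dedekind complete dual Banach lattice $\dual\bl$. Since $\norm\phi=\norm{x^*}\le 1$ and $\norm u = x^*(u)\le\phi(u)\le\norm\phi\,\norm u\le\norm u$, the functional $\phi$ is positive, contractive, and satisfies $\phi(u)=\norm u=1$.

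Next I would pass to the abstract L-space attached to $\phi$. Setting $p(y)\coloneqq\phi(\abs y)$, positivity of $\phi$ makes $p$ a lattice seminorm dominated by $\norm{\cdot}$, while additivity of $\phi$ on the positive cone gives $p(y+z)=p(y)+p(z)$ for $y,z\ge 0$. Quotienting $\bl$ by the ideal $\{p=0\}$ and completing then produces an abstract L-space $L$ together with a contractive vector lattice homomorphism $\mortwo\colon\bl\to L$ for which $\norm{\mortwo(x)}=p(x)=\norm x$. Finally, to force the target to be $\Ell_1$ of a probability measure, I would compose with the band projection $P$ of $L$ onto the band $B$ generated by $e\coloneqq\mortwo(u)=\abs{\mortwo(x)}$; this projection exists because $L$ is Dedekind complete and is itself a contractive lattice homomorphism. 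Since $\abs{\mortwo(x)}=e\in B$ and $B$ is an ideal, $\mortwo(x)\in B$, so the composite $\mor\coloneqq P\mortwo$ still satisfies $\norm{\mor(x)}=\norm x$; now $e$ is a weak order unit of $B$ of norm $1$, and Kakutani's representation theorem identifies $B$ lattice-isometrically with $\Ell_1(\mu)$ for a probability measure $\mu$.

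The routine parts are verifying that $p$ is a lattice seminorm, that the quotient-completion yields an L-space, and that band projections are contractive lattice homomorphisms. The step I expect to carry the real idea is the modulus trick producing the positive norming functional $\phi$, and the point most likely to need care is the final reduction, via the band projection onto the band generated by $e$, which is exactly what lets me land in $\Ell_1$ of a probability measure rather than of an arbitrary measure.
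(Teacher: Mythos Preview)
Your proposal is correct and follows essentially the same route as the paper: choose a positive contractive functional $\phi$ with $\phi(\abs{x})=\norm{x}$, pass to the AL-space completion of $\bl/\{\phi(\abs{\cdot})=0\}$ in the seminorm $y\mapsto\phi(\abs{y})$, and then cut down so that the target is $\Ell_1$ of a probability measure. The only difference is cosmetic and occurs in this last step: the paper first realises the AL-space concretely as some $\Ell_1(\nu)$ and performs an explicit change of measure on the support of $\abs{\mortwo(x)}$, whereas you stay abstract and project onto the band generated by $\abs{\mortwo(x)}$ before invoking Kakutani with a weak order unit of norm~$1$; these are two descriptions of the same reduction.
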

\begin{proof}
	Take a non-zero $x_0\in E$. Choose $x_0^*\in\ball_{\bldual_+}$ such that $x_0^*(\abs{x_0})=\norm{x_0}$. We let $N_{x_0^*}\coloneqq\{x\in\bl:x_0^*(\abs{x})=0\}$ be its null ideal, and define $\bltwo$ to be the completion of $\bl/N(x_0^*)$ in the norm $\norm{x+N_{x_0^*}}\coloneqq x_0^*(\abs{x})$. The canonical vector lattice homomorphism from $E$ into $F$ is contractive. As any two disjoint positive elements in a quotient have disjoint positive lifts, $\bltwo$ is an AL-space, so that it can be identified with a concrete $\Ell_1$-space $L_1(\nu)$ on a set $\pset$. If we let $\mortwo\colon\bl\to\Ell_1(\nu)$ be the resulting contractive vector lattice homomorphism, then $\norm{x_0}=\norm{\mortwo(x_0 )}_{\Ell_1(\nu)}$. To obtain a probability measure, we set $S\coloneqq\{\pt\in\pset:[\mortwo(x_0)](\pt)\neq 0\}$, we let $\nu\!\restriction_S$ be the restriction of $\nu$ to $S$, and define a probability measure $\mu$ on $S$ by setting
	\[
	\mathrm{d}\mu\coloneqq\frac{\abs{\psi(x_0)}\!\restriction_S}{\norm{x_0}}\,\,\mathrm{d}\!\nu\!\restriction_S.
	\]
	We define  $\chi\colon\Ell_1(\nu)\to\Ell_1(\mu)$ by setting
	\[
	\chi(f)\coloneqq \frac{\norm{x_0}}{\abs{\psi(x_0) }\!\restriction_s} \,\,f\!\!\restriction_S
	\] for $f\in\Ell_1(\nu)$. Then $\chi$ is a contractive vector lattice homomorphism. Furthermore,  $\norm{(\chi\circ\psi)(x_0)}_{\Ell_1(\mu)}=\norm{\psi(x_0)}_{\Ell_1(\nu)}=\norm{x_0}$. The result now follows easily.
\end{proof}

For $1<p<\infty$, we have the following.

\begin{theorem}\label{res:characterisation_p}
	Let $1<p<\infty$ and let $M\geq 1$. The following are equivalent for a Banach lattice $\bl$:
	\begin{enumerate}
		\item\label{part:characterisation_p_1} $E$ is $p$-convex and $M^{(p)}(E)\leq M$;
		\item\label{part:characterisation_p_2} for $x\in\bl$,
		\[
		\norm{x}\leq M \sup_{\mor}\norm{\mor(x)},
		\]
		where $\mor$ runs over all contractive vector lattice homomorphisms $\mor\colon\bl\to\Ell_p(\mu_\mor)$ for probability measures $\mu_\mor$.
	\end{enumerate}
\end{theorem}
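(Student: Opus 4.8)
The plan is to prove the two implications separately, reducing the substantive one, \ref{part:characterisation_p_1}$\Rightarrow$\ref{part:characterisation_p_2}, to the case $p=1$ that is already settled in \cref{res:characterisation_1}.

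For \ref{part:characterisation_p_2}$\Rightarrow$\ref{part:characterisation_p_1} I would argue directly. Given $x_1,\dotsc,x_n\in\bl$, set $u\coloneqq\bigl(\sum_{i=1}^n\abs{x_i}^p\bigr)^{1/p}$, formed with the positively homogeneous functional calculus. For any contractive vector lattice homomorphism $\mor\colon\bl\to\Ell_p(\mu)$, the compatibility of the functional calculus with vector lattice homomorphisms gives $\mor(u)=\bigl(\sum_i\abs{\mor(x_i)}^p\bigr)^{1/p}$, and in $\Ell_p(\mu)$ one computes \emph{exactly} that $\norm{\mor(u)}^p=\sum_i\norm{\mor(x_i)}^p\le\sum_i\norm{x_i}^p$, using that $\mor$ is contractive. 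Taking the supremum over $\mor$ and invoking \ref{part:characterisation_p_2} then yields $\norm{u}\le M\bigl(\sum_i\norm{x_i}^p\bigr)^{1/p}$, which is precisely $p$-convexity with constant at most $M$.

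The forward implication \ref{part:characterisation_p_1}$\Rightarrow$\ref{part:characterisation_p_2} is the heart of the matter, and the device I would use is the $p$-concavification. Consider the vector lattice $\bl$ equipped with the lattice functional $\norm{z}_{(p)}\coloneqq\norm{\abs{z}^{1/p}}_{\bl}^p$; call this $\bl_{(p)}$. Two facts drive the argument. First, using once more that the functional calculus commutes with vector lattice homomorphisms, together with the identity $\norm{f}_{\Ell_1}=\norm{\abs{f}^{1/p}}_{\Ell_p}^p$ valid in every $\Ell_p$-space, one checks that a given map is a contractive vector lattice homomorphism $\mor\colon\bl\to\Ell_p(\mu)$ if and only if the same underlying map is a contractive vector lattice homomorphism $\bl_{(p)}\to\Ell_1(\mu)$ with respect to $\norm{\cdot}_{(p)}$; moreover for $x\ge 0$ one has $\norm{\mor(x)}_{\Ell_p}^p=\norm{\mor(\abs{x}^p)}_{\Ell_1}$. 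Second, $p$-convexity with constant $M$ is exactly what is needed to dominate $\norm{\cdot}_{(p)}$ by a genuine Banach lattice norm: the regularisation $\rho(z)\coloneqq\inf\bigl\{\sum_i\norm{x_i}_{\bl}^p: \abs{z}\le\sum_i\abs{x_i}^p\bigr\}$ is a lattice norm satisfying $\rho(z)\le\norm{z}_{(p)}\le M^p\rho(z)$, the right-hand estimate being a direct transcription of the $p$-convexity inequality after taking $p$-th powers. That $(\bl_{(p)},\rho)$ is in fact a Banach lattice is part of the standard theory of the concavification, for which I would cite \cite[Section~1.d]{lindenstrauss_tzafriri_CLASSICAL_BANAH_SPACES_VOLUME_II_FUNCTION_SPACES:1979}.

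With these preparations, I would apply \cref{res:characterisation_1} to the Banach lattice $(\bl_{(p)},\rho)$. Fixing $x\in\bl$, which may be taken positive since neither $\norm{x}_{\bl}$ nor $\sup_\mor\norm{\mor(x)}_{\Ell_p}$ changes on replacing $x$ by $\abs{x}$, I set $z\coloneqq\abs{x}^p\in\bl_{(p)}$ and note $\norm{z}_{(p)}=\norm{x}_{\bl}^p$, whence $\rho(z)\ge\norm{x}_{\bl}^p/M^p$. \cref{res:characterisation_1} furnishes $\rho$-contractive vector lattice homomorphisms $\mortwo\colon\bl_{(p)}\to\Ell_1(\mu)$ with $\norm{\mortwo(z)}_{\Ell_1}$ arbitrarily close to $\rho(z)$; since $\rho\le\norm{\cdot}_{(p)}$, each such $\mortwo$ is also contractive for $\norm{\cdot}_{(p)}$, hence corresponds under the first fact above to a contractive $\mor\colon\bl\to\Ell_p(\mu)$ with $\norm{\mor(x)}_{\Ell_p}^p=\norm{\mortwo(z)}_{\Ell_1}$. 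Combining these gives $\sup_\mor\norm{\mor(x)}_{\Ell_p}^p\ge\rho(z)\ge\norm{x}_{\bl}^p/M^p$, and taking $p$-th roots produces $\norm{x}_{\bl}\le M\sup_\mor\norm{\mor(x)}_{\Ell_p}$, as required. The step I expect to be most delicate is the bookkeeping of the convexity constant: the naive concavified functional $\norm{\cdot}_{(p)}$ is only subadditive up to a factor $M^p$, so one must pass to $\rho$ and verify the two-sided comparison $\rho\le\norm{\cdot}_{(p)}\le M^p\rho$; it is precisely this $M^p$, undone by the final $p$-th root, that yields the sharp constant $M$ in \ref{part:characterisation_p_2}. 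A secondary technical point, handled by the functional-calculus formalism of \cite{lindenstrauss_tzafriri_CLASSICAL_BANAH_SPACES_VOLUME_II_FUNCTION_SPACES:1979}, is the legitimacy of forming $\abs{x}^p$ and $\abs{z}^{1/p}$ and the completeness of $(\bl_{(p)},\rho)$.
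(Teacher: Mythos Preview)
Your proof of \ref{part:characterisation_p_2}$\Rightarrow$\ref{part:characterisation_p_1} coincides with the paper's. For the converse, your route via the $p$-concavification is correct in outline and genuinely different from the paper's. The paper applies \cref{res:characterisation_1} directly to $\bl$ to obtain a contractive lattice homomorphism $\mor\colon\bl\to\Ell_1(\mu)$ with $\norm{\mor(x)}_{\Ell_1}=\norm{x}$, observes that $\mor$ inherits a $p$-convexity estimate from $\bl$, and then invokes the Maurey--Nikishin factorisation theorem \cite[Theorem~7.1.2]{albiac_kalton_TOPICS_IN_BANACH_SPACE_THEORY:2006} to factor $\mor$ through an $\Ell_p(\nu)$-space via a map $S$ with $\norm{S}\leq M$; the explicit change-of-density form of that factorisation ensures $S$ is again a lattice homomorphism. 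Your approach trades this external factorisation theorem for the internal concavification machinery, keeping the argument within Banach lattice theory; the paper's route is shorter once Maurey--Nikishin is taken as given. One caution on your write-up: you describe $\bl_{(p)}$ as ``the vector lattice $\bl$ equipped with the lattice functional $\norm{\,\cdot\,}_{(p)}$'', but the expressions $\abs{x}^p$ and $\sum_i\abs{x_i}^p$ are not positively homogeneous and so are not directly available from the calculus you cite; the honest concavification in \cite[Section~1.d]{lindenstrauss_tzafriri_CLASSICAL_BANAH_SPACES_VOLUME_II_FUNCTION_SPACES:1979} modifies the \emph{linear} structure as well (addition via the positively homogeneous function $(s,t)\mapsto(\abs{s}^p+\abs{t}^p)^{1/p}$, scalar multiplication by $\lambda\mapsto\text{sgn}(\lambda)\abs{\lambda}^{1/p}$), and it is only in that modified structure that $(\bl_{(p)},\rho)$ is a Banach lattice and the bijection with lattice homomorphisms into $\Ell_1$ holds. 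Since you defer these details to \cite{lindenstrauss_tzafriri_CLASSICAL_BANAH_SPACES_VOLUME_II_FUNCTION_SPACES:1979}, the argument still goes through, but the phrasing understates what is being changed.
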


\begin{proof}
	We start with the easy proof that part~\ref{part:characterisation_p_2} implies part~\ref{part:characterisation_p_1}.\! Take $x_1,\dotsc,x_n\in\bl$. Using  that the codomains are $\Ell_p$-spaces in the third step, we then have
	\begin{align*}
		\lrnorm{\left(\sum_{i=1}^n\abs{x_i}^p\right)^{1/p}}&\leq M \sup_\mor\lrnorm{\mor\left(\left(\sum_{i=1}^n\abs{x_i}^p\right)^{1/p}\right)} \\
		&=M \sup_\mor \lrnorm{\left(\sum_{i=1}^n\abs{\mor(x_i)}^p\right)^{1/p}}\\
		&=M \sup_\mor \left(\sum_{i=1}^n\norm{\mor(x_i)}^p\right)^{1/p}\\	
		&\leq M \left(\sum_{i=1}^n\norm{x_i}^p\right)^{1/p}.
	\end{align*}
	
	We now prove that part~\ref{part:characterisation_p_1} implies part~\ref{part:characterisation_p_2}. Take $x\in\bl$. \cref{res:characterisation_1} furnishes a probability measure $\mu$ and a contractive vector lattice homomorphism $\mor\colon\bl\to\Ell_1(\mu)$ such that $\norm{x}_E=\norm{\mor(x)}_{\Ell_1(\mu)}$. Take $x_1,\dotsc,x_n\in\bl$. Then
	\begin{align*}
		 \lrnorm{\left(\sum_{i=1}^n\abs{\mor(x_i)}^p\right)^{1/p}}_{\Ell_1(\mu)}&=\lrnorm{\mor\left(\left(\sum_{i=1}^n\abs{x_i}^p\right)^{1/p}\right)}_{\Ell_1(\mu)}\\
		&\leq\lrnorm{\left(\sum_{i=1}^n\abs{x_i}^p\right)^{1/p}}_\bl\\
		&	\leq M \left(\sum_{i=1}^n\norm{x_i}_\bl^p\right)^{1/p}.
	\end{align*}
	 The Maurey-Nikishin factorisation theorem (see \cite[Theorem~7.1.2]{albiac_kalton_TOPICS_IN_BANACH_SPACE_THEORY:2006}) now yields a probability measure $\nu$, a vector lattice homomorphism $S\colon\bl\to\Ell_p(\nu)$ with $\norm{S}\leq M$, and an isometric embedding $j\colon \Ell_1(\nu)\to\Ell_1(\mu)$ such that the diagram
	
	\begin{equation*}
		\begin{tikzcd}
			\bl\arrow[r, "\mor"]\arrow[d, "S", swap]&  \Ell_1(\mu)
			\\ \Ell_p(\nu)\arrow[r,hookrightarrow,"i"]& \Ell_1(\nu)\arrow[u, "j", swap, hookrightarrow]
		\end{tikzcd}
	\end{equation*}
	is commutative. Here $i$ is the inclusion map, which is contractive as $\nu$ is a probability measure. We then have
	\begin{align*}
		\norm{x}_E&=\norm{\mor(x)}_{\Ell_1(\mu)}=\norm{(j\circ i\circ S)(x)}_{\Ell_1(\mu)}=\norm{(i\circ S)(x)}_{\Ell_1(\nu)}\leq \norm{ S(x)}_{\Ell_p(\nu)}.
	\end{align*}	
	On writing $S=M\cdot(S/M)$ we see that the inequality in part~\ref{part:characterisation_p_2} holds for $x$.
\end{proof}

To conclude, we cover the case $p=\infty$.

\begin{proposition}\label{res:characterisation_infinite_p}
	The following are equivalent for a Banach lattice $\bl$:
	\begin{enumerate}
		\item\label{part:characterisation_infinite_p_1} $E$ is $\infty$-convex with $\infty$-convexity constant 1;
		\item\label{part:characterisation_infinite_p_2} for $x\in\bl$,
		\[
		\norm{x}=\max_{\mor}\norm{\mor(x)},
		\]
		where $\mor$ runs over all contractive vector lattice homomorphisms $\mor\colon\bl\to\Ell_\infty(\mu_\mor)$ for probability measures $\mu_\mor$;
		\item\label{part:characterisation_infinite_p_3} for $x\in\bl$,
		\[
		\norm{x}=\max_{\mor}\norm{\mor(x)},
		\]
		where $\mor$ runs over all contractive vector lattice homomorphisms $\mor\colon\bl\to\RR$.
	\end{enumerate}
\end{proposition}

\begin{proof}
	It follows from the Kakutani representation theorem (see \cite[Theorem~1.b.6]{lindenstrauss_tzafriri_CLASSICAL_BANACH_SPACES_VOLUME_II_FUNCTION_SPACES:1979}, for example) that the $\infty$-convex Banach lattices with $\infty$-convexity constant 1 are, up to an isometric vector lattice homomorphism, precisely the Banach sublattices of $\cont(K)$ for some compact Hausdorff space $K$. On considering the point evaluations in such a functional representation, we see that part~\ref{part:characterisation_infinite_p_1} implies  part~\ref{part:characterisation_infinite_p_3}.
	
	Since $\RR$ is the $\Ell_\infty$-space for a probability measure on a one-point set, it is clear that part~\ref{part:characterisation_infinite_p_3} implies  part~\ref{part:characterisation_infinite_p_2}.
	
	The argument that part~\ref{part:characterisation_infinite_p_2} implies  part~\ref{part:characterisation_infinite_p_1} is similar to that in the first part of the proof of \cref{res:characterisation_p}.
\end{proof}

The three preceding results combine to yield the following.

\begin{corollary}\label{res:characterisation_all_p}
	Let $1\leq p\leq \infty$. The following are equivalent for a Banach lattice $\bl$:
	\begin{enumerate}
		\item\label{part:characterisation_all_p_1} $E$ is $p$-convex with $p$-convexity constant 1;
		\item\label{part:characterisation_all_p_2} for $x\in\bl$,
		\begin{equation}\label{eq:norming_equation}
		\norm{x}=\sup_{\mor}\norm{\mor(x)},
		\end{equation}
		where $\mor$ runs over all contractive vector lattice homomorphisms $\mor\colon\bl\to\Ell_p(\mu_\mor)$  for probability measures $\mu_\mor$.
	\end{enumerate}
When this is the case for $p=1$ or $p=\infty$, the supremum in \eqref{eq:norming_equation} is a maximum.
\end{corollary}
\begin{remark}
	For $1<p<\infty$, we shall see in \cref{rem:renorming} how \cref{res:characterisation_all_p} provides an alternate proof of the fact that a $p$-convex Banach lattice has an equivalent vector lattice norm in which it is $p$-convex with $p$-convexity constant~1.
\end{remark}

\subsection{Convexity types}

The definition of a convexity type uses the positively homogeneous functional calculus.
For $n\in\NN$, we let $\homon$ denote the set of all continuous, positively homogeneous, real-valued functions on $\RR^n$. We let $\funincrn$ denote the set of functions $f\colon \RR^n_+\to\mathbb R$ such that $f(0,\dotsc,0)=0$ and $f(t_1,\dotsc,t_n)\leq f(s_1,\dotsc,s_n)$ when $t_i\leq s_i$ for $i=1,\dots,n$. There are no regularity requirements for these increasing functions on the positive cone $\RR^n_+$ of $\RR^n$; it will become clear in the proof of \cref{res:quotients_C-convex} and \cref{res:convexity} why they are supposed to be increasing. The condition that $f(0,\dotsc,0)=0$, which implies that $f$ is positive, is not essential, but is necessary when one wants to ensure that the class of $\ct$-convex Banach lattices as defined below is non-void.

A \emph{convexity implication} $\ci$ is a triple $(\cia,\cic;n)$, where $n\in\NN$, $\cia\subseteq\homon$, and $\cic\subseteq\homon\times\funincrn$. We allow that $\cia=\emptyset$ but require that $\cic\neq\emptyset$. $\cia$ is the \emph{antecedent} of $\ci$; $\cic$ is its \emph{consequent}. A Banach lattice $\bl$ is said to \emph{satisfy $\ci$} if
\begin{equation}\label{eq:convexity_inequality}
	\norm{c_1(x_1,\dotsc,x_n)}\leq c_2(\norm{x_1},\dots,\norm{x_n})
\end{equation}
for every $(c_1,c_2)\in\cic$, whenever $x_1,\dotsc,x_n\in\bl$ are such that $a(x_1,\dotsc,x_n)\leq 0$ for every $a\in\cia$. When $\cia=\emptyset$, we require that \eqref{eq:convexity_inequality} hold for all $x_1,\cdots,x_n\in\bl$.\footnote{Thus $\bl$ satisfies $\ci=(\cia,\cic;n)$ if an implication is valid for $n$-tuples of its elements, the (possibly vacuously fulfilled) antecedent of which is expressed in terms of the elements $a$ of $\cia$, and the consequent of which is expressed in terms of the elements $(c_1,c_2)$ of $\cic$. This motivates the terminology and notation.}

A \emph{convexity type} $\ct$ is a (possibly empty) set of convexity implications. A Banach lattice is said to be \emph{$\ct$-convex} if it satisfies all convexity implications in $\ct$. The zero lattice is always $\ct$-convex and every Banach lattice is $\emptyset$-convex.

Before giving examples of convexity types, we elaborate a little on how the antecedent $\cia$ of a convexity implication $(\cia,\cic;n)$ can be used to express various conditions on elements $x_1,\cdots,x_n$ of a Banach lattice under which the resulting inequalities  from $\cic$ in \eqref{eq:convexity_inequality} are required to hold.

First of all, we note that, for $a\in\homon$, including both $a$ and $-a$ in $\cia$ also gives the condition $a(x_1,\dots,x_n)=0$ a place in the formalism.

The following example shows how the common conditions of being pairwise disjoint, positive, or both, all result from an suitable $\cia$.

\begin{example}\label{ex:antecedents_1}
	Take $n\in\NN$ and define $a_n\in\homon$ by setting $a_n(t_1,\dotsc,t_n)\coloneqq \sum_{i,j=1}^n\abs{t_i}\wedge\abs{t_j}$. Then $a_n(x_1,\dotsc,x_n)\leq 0$ if and only if the $x_i$ are pairwise disjoint.  If we set
	$a_n^\prime(t_1,\dotsc,t_n)\coloneqq \sum_{i=1}^n {\abs{t_i-\abs{t_i}}}$, then $a_n^\prime(x_1,\dotsc,x_n)\leq 0$ if and only if $x_1,\dotsc,x_n\in\blpos$.  Thus, for $\ci=(\{a_n\},\cic;n)$ (resp.\ $\ci=(\{a_n^\prime\},\cic;n))$, a Banach lattice $\bl$ satisfies $\ci$ when the resulting inequalities  from $\cic$ in \eqref{eq:convexity_inequality} hold for all pairwise disjoint (resp.\ positive) $x_1,\dotsc,x_n$ in $E$. For  $\ci=(\{a_n+a_n^\prime\},\cic;n)$, the inequalities are required to hold for all pairwise disjoint positive $x_1,\dotsc,n$.
	\end{example}

 Whereas the antecedents $\{a_n\}$, $\{a_n^\prime\}$, and $\{a_n^\prime+a_n^{\prime\prime}\}$ in \cref{ex:antecedents_1} are evidently relevant in practice, the formalism also captures other conditions that are quite natural. For example, if $n\geq 2$ and we set
	\[
	a_n^{\prime\prime}(t_1,\dotsc,t_n)\coloneqq \sum_{i=1}^n \abs{t_i-\abs{t_i}} +\sum_{i=1}^n \abs{t_i}\wedge\abs{t_j} + \lrabs{t_n-\sum_{i=1}^{n-1}t_i},
	\]
	then $a^{\prime\prime}(x_1,\dotsc,x_n)\leq 0$ if and only if the $x_i$ are positive, pairwise disjoint, and $x_n=x_1+\dotsb x_{n-1}$. If $n\geq 2$ and we set
	\[
	a_n^{\prime\prime\prime}(t_1,\dotsc,t_n)\coloneqq \sum_{i=1}^n {\abs{t_i-\abs{t_i}}}+\sum_{i=1}^n \abs{t_i-{t_{i-1}}-\abs{t_i-t_{i-1}}},
	\]
	then $a^{\prime\prime\prime}(x_1,\dotsc,x_n)\leq 0$ if and only if $0\leq x_1\leq x_2\leq\dotsb\leq x_n$.
	
	Finally, we remark that it is also possible to obtain more `exotic' conditions on $x_1,\dotsc,x_n$ by letting $\cia$ consist of more than one function, and also including functions that are not in the vector lattice generated by the coordinate functionals. For example, take $1\leq p,q<\infty$ and $M>0$, and set
	\[
	a_n^{\prime\prime\prime\prime}(t_1,\dotsc,t_n)\coloneqq \left(\sum_{i=1}^n \abs{t_i}^p\right)^{1/p}-M\left(\sum_{i=1}^n \abs{t_i}^q\right)^{1/q}.
	\]
	If $\ci=(\{	a_n^{\prime\prime\prime},a_n^{\prime\prime\prime\prime}\},\cic;n)$ is a convexity implication, then a Banach lattice $\bl$ satisfies $\ci$ when the resulting inequalities from $\cic$ in \eqref{eq:convexity_inequality} hold for all $0\leq x_1\leq\dotsb\leq x_n$ in $\bl$ such that $ \left(\sum_{i=1}^n \abs{x_i}^p\right)^{1/p}\leq M\left(\sum_{i=1}^n \abs{x_i}^q\right)^{1/q}$.

We now continue with some examples of convexity types.

\begin{example}\label{ex:convexity_type_1}
	Let $M\geq 1$. Take $1\leq p<\infty$. If we set $c_n^p(t_1,\ldots,t_n)\coloneqq\left(\sum_{i=1}^n\abs{t_i}^p\right)^{1/p}$ for $n\geq 1$, and take
	\[
	\ct^{p,M}\coloneqq \big\{(\emptyset,\{(c_n^p, M c_n^p)\};n):n=1,2,\ldots \big\},
	\]
	then the $\ct^{p,M}$-convex Banach lattices are the $p$-convex Banach lattices with $p$-convexity constant at most $M$. If we take		
	\[
	\ct^{p,M,\mathrm{u}}\coloneqq \big\{(\{a_n\},\{(c_n^p, M c_n^p)\};n):n=1,2,\ldots \big\},
	\]
	where the $a_n$ are as in \cref{ex:antecedents_1}, then the $\ct^{p,M,\mathrm{u}}$-convex Banach lattices are the Banach lattices satisfying a so-called upper $p$-estimate with constant $M$.
\end{example}

\begin{example}\label{ex:convexity_type_2}
	Let $M\geq 1$. If we set $c_n^\infty (t_1,\ldots,t_n)\coloneqq\sum_{i=1}^n\abs{t_i}$ for $n\geq 1$, and take
	\[
	\ct^{\infty,M} \coloneqq \big\{(\emptyset,\{(c_n^\infty, M c_n^\infty)\};n):n=1,2,\ldots \big\},
	\]
	then the $\ct^{\infty,M}$-convex Banach lattices are the $\infty$-convex Banach lattices with $\infty$-convexity constant at most $M$.
\end{example}

\begin{example}\label{ex:convexity_type_3}
	Let $M\geq 1$. Take $1\leq p,q<\infty$. With $c_n^p$ and $c_n^q$ as in \cref{ex:convexity_type_1}, if we take
	\[
	\ct^{p,q,M}\coloneqq \big\{(\emptyset,\{(c_n^q, M c_n^p)\};n):n=1,2,\ldots \big\},
	\]
	then the $\ct^{p,q,M}$-convex Banach lattices are the so-called $(p,q)$-convex Banach lattices with $(p,q)$-convexity constant at most $M$.\footnote{We refer to the literature for the non-trivial facts that the zero lattice is the only $(p,q)$-convex Banach lattice when $1\leq q<p$, and that the
	the $(p,q)$-convex Banach lattices coincide with the  $(p,q^\prime)$-convex ones when $p<q,q^\prime<\infty$.} If we let
	\[
	\ct^{p,q,M,\mathrm{u}}\coloneqq \big\{(\{a_n\},\{(c_n^q, M c_n^p)\};n):n=1,2,\ldots \big\},
	\]
	where the $a_n$ are as in \cref{ex:antecedents_1}, then the $\ct^{p,q,M,\mathrm{u}}$-convex Banach lattices are those that could be said to  satisfy an upper $(p,q)$-estimate with constant $M$.
\end{example}

Clearly, a Banach sublattice of a $\ct$-convex Banach lattice is also $\ct$-convex. For a quotient, this is not automatic. A sufficient condition is that elements in the quotient that satisfy all the conditions from an antecedent should have lifts as in the following result. It has its roots in \cite[Lemma~4.7]{jardon-sanchez_laustsen_taylor_tradacete_troitsky:2022}, where such lifts always exist.

\begin{lemma}\label{res:quotients_C-convex}
Let $\ct$ be a convexity type, let $\bl$ be a $\ct$-convex Banach lattice, let $I$ be a closed order ideal in $\bl$, and let $q\colon\bl\to\bl/I$ be the quotient vector lattice homomorphism. Suppose that the following hold for every $\ci=(\cia,\cic;n)$ in $\ct$ :
\begin{enumerate}
	\item $c_2$ is continuous for all $(c_1,c_2)\in\cic$;
	\item for all $\widetilde x_1,\dotsc,\widetilde x_n\in \bl/I$ such that $a(\widetilde x_1,\dotsc,\widetilde x_n)\leq 0$ for all $a\in\cia$, and for all $\varepsilon>0$, there exist $x_1,\dotsc,x_n\in\bl$ such that
	\begin{enumerate}
		\item $q(x_k)=\widetilde x_k$;
		\item $a(x_1,\dotsc,x_n)\leq 0$ for all $a\in\cia$;
		\item $\norm{x_k}\leq\norm{\widetilde x_k}+\varepsilon$ for $k=1\dotsc,n$.
	\end{enumerate}
\end{enumerate}
Then $\bl/I$ is $\ct$-convex.
\end{lemma}

\begin{proof}
Take $\ci=(\cia,\cic;n)$ in $\ct$, and suppose that $\widetilde x_1,\dotsc,\widetilde x_n\in \bl/I$ are such that $a(\widetilde x_1,\dotsc,\widetilde x_n)\leq 0$ for all $a\in\cia$. Take $(c_1,c_2)\in\cic$, and let $\varepsilon>0$. Choose  $x_1,\dotsc,x_n\in\bl$ as in the statement. Using that $\bl$ is $\ct$-convex and that $c_2$ is increasing, we have
\begin{align*}
\norm{c_1(\widetilde x_1,\dotsc,\widetilde x_n)}&=\norm{q(c_1(x_1,\dotsc,x_n))}
\leq \norm{(c_1(x_1,\dotsc,x_n))}\\
&\leq c_2(\norm{x_1},\dotsc,\norm{x_n})
\leq c_2(\norm{\widetilde x_1}+\varepsilon,\dotsc,\norm{\widetilde x_n}+\varepsilon).
\end{align*}
As $c_2$ is supposed to be continuous and $\varepsilon$ is arbitrary, we conclude that $\bl/I$ is $\ct$-convex.
\end{proof}

We now come to the key fact about convexity types, to be used in \cref{sec:free_banach_lattices_over_pre-ordered_banach_spaces} for an easy proof that the candidate free Banach lattice as constructed there does indeed have the desired convexity type.

\begin{lemma}\label{res:convexity}
	Let $\bl$ be a Banach lattice and let $\ct$ be a convexity type. Suppose that, for all $x\in\bl$,
	\begin{equation*}\label{eq:convexity}
		\norm{x}=\sup_\mor\norm{\mor(x)},
	\end{equation*}
	where $\mor$ runs over a class of contractive vector lattice homomorphisms $\mor\colon\bl\to\bl_\mor$ into Banach lattices $\bl_\mor$. If all $\bl_\mor$ are $\ct$-convex, then $\bl$ is $\ct$-convex.
\end{lemma}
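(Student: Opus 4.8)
The plan is to unwind the definition of $\ct$-convexity to a single convexity implication and then transport everything through the maps $\mor$, exploiting that vector lattice homomorphisms are positive, contractive, and compatible with the positively homogeneous functional calculus. Fix a convexity implication $\ci=(\cia,\cic;n)$ in $\ct$ and elements $x_1,\dotsc,x_n\in\bl$ satisfying its antecedent, so that $a(x_1,\dotsc,x_n)\leq 0$ for all $a\in\cia$ (the antecedent being vacuous when $\cia=\emptyset$). Given $(c_1,c_2)\in\cic$, the goal is the inequality $\norm{c_1(x_1,\dotsc,x_n)}\leq c_2(\norm{x_1},\dotsc,\norm{x_n})$.

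First I would transport the antecedent along each $\mor$ in the norming class. As $\mor$ is a vector lattice homomorphism it is positive, and the positively homogeneous functional calculus is compatible with it; hence $a(\mor(x_1),\dotsc,\mor(x_n))=\mor(a(x_1,\dotsc,x_n))\leq 0$ for every $a\in\cia$. Thus the images $\mor(x_1),\dotsc,\mor(x_n)$ satisfy the antecedent of $\ci$ inside $\bl_\mor$. Since $\bl_\mor$ is $\ct$-convex, it satisfies $\ci$, so $\norm{c_1(\mor(x_1),\dotsc,\mor(x_n))}\leq c_2(\norm{\mor(x_1)},\dotsc,\norm{\mor(x_n)})$. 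Rewriting the left-hand side via compatibility once more as $\norm{\mor(c_1(x_1,\dotsc,x_n))}$ gives, for each $\mor$,
\[
\norm{\mor(c_1(x_1,\dotsc,x_n))}\leq c_2(\norm{\mor(x_1)},\dotsc,\norm{\mor(x_n)}).
\]

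The final step is to take the supremum over $\mor$. On the left, the hypothesis yields $\sup_\mor\norm{\mor(c_1(x_1,\dotsc,x_n))}=\norm{c_1(x_1,\dotsc,x_n)}$. On the right, contractivity of each $\mor$ gives $\norm{\mor(x_i)}\leq\norm{x_i}$, and since $c_2\in\homonincr$ is increasing on $\RR^n_+$ while all arguments are nonnegative, monotonicity yields $c_2(\norm{\mor(x_1)},\dotsc,\norm{\mor(x_n)})\leq c_2(\norm{x_1},\dotsc,\norm{x_n})$ uniformly in $\mor$. Combining these two observations produces the desired inequality, and since $\ci$ and $(c_1,c_2)$ were arbitrary, $\bl$ is $\ct$-convex.

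The one step requiring care{\textemdash}and precisely the reason the definition insists $c_2\in\homonincr${\textemdash}is this closing monotonicity argument: contractivity controls the individual norms $\norm{\mor(x_i)}$ only from above, so passing from $c_2(\norm{\mor(x_i)})$ to $c_2(\norm{x_i})$ is legitimate only because $c_2$ is increasing on the positive cone. Everything else reduces to the positivity, contractivity, and functional-calculus compatibility already recorded in the preliminaries, so I anticipate no further obstacles.
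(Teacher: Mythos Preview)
Your proof is correct and follows essentially the same approach as the paper's: transport the antecedent through each $\mor$ via compatibility of the functional calculus with vector lattice homomorphisms, apply $\ct$-convexity of the codomains, then take the supremum over $\mor$ using the norming hypothesis on the left and contractivity together with the monotonicity of $c_2$ on $\RR^n_+$ on the right. You have also correctly identified the crucial role of $c_2\in\homonincr$, which the paper likewise singles out.
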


\begin{proof}
	If $\ct=\emptyset$, there is nothing to prove, so we suppose that $\ct$ is non-empty. Take a convexity implication $\ci=(\cia,\cic;n)$ in $\ct$. Suppose that $x_1,\cdots,x_n\in\bl$ are such that  $a(x_1,\ldots,x_n)\leq 0$ for all $a\in\cia$; this is vacuously fulfilled if $\cia=\emptyset$. Take a $\mor\colon\bl\to\bl_\mor$ as in the statement. Then $a(\mor(x_1),\ldots,\mor(x_n))=\mor(a(x_1,\ldots,x_n))\leq 0$ for all $a\in\cia$. Since $\bl_\mor$ satisfies $\ci$, we have 	
	\[
	\norm{c_1(\mor(x_1),\dotsc,\mor(x_n))}\leq c_2(\norm{\mor(x_1)},\dots,\norm{\mor(x_n)})
	\]
	for all $(c_1,c_2)\in\cic$. Using the fact that $c_2$ is increasing in the final step, we then see that, for all $(c_1,c_2)\in\cic$,
	\begin{align*}
		\norm{c_1(x_1,\dotsc,x_n)}&=\sup_\mor \norm{\mor\left(c_1(x_1,\dotsc,x_n)\right)}\\
		&=\sup_\mor \norm{c_1(\mor(x_1),\dotsc,\mor(x_n))}\\
		&\leq\sup_\mor c_2(\norm{\mor(x_1)},\dots,\norm{\mor(x_n)})\\
		&\leq c_2(\norm{x_1},\dotsc,\norm{x_2}).
	\end{align*}
	Hence $\bl$ satisfies $\ci$, as required.
\end{proof}

\begin{remark}\label{rem:renorming}
	Let $1< p<\infty$, and let $\bl$ be a $p$-convex Banach lattice. \cref{res:characterisation_p} then shows that $\norm{x}\leq M^{(p)}(E)\sup_\mor\norm{\mor(x)}$, where $\mor$ runs over all contractive vector lattice homomorphisms $\mor\colon\bl\to\Ell_p(\mu_\mor)$ for probability measures $\mu_\mor$. For $x\in\bl$, set $\norm{x}^\prime\coloneqq \sup_\mor\norm{\mor(x)}$. Since  $\norm{x}^\prime\leq\norm{x}\leq M^{(p)}(E)\norm{x}^\prime$,  $\norm{\,\cdot\,}^\prime$ is a complete vector lattice norm on $\bs$ that is equivalent to $\norm{\,\cdot\,}$. As all codomains of the $\varphi$ are $p$-convex with $p$-convexity constant 1, \cref{res:convexity} shows that the same is true for $\bl$ in the norm $\norm{\,\cdot\,}^\prime$. We have thus retrieved a part of \cite[Proposition~1.d.8]{lindenstrauss_tzafriri_CLASSICAL_BANACH_SPACES_VOLUME_II_FUNCTION_SPACES:1979}.
\end{remark}

\begin{remark}
With \cref{ex:antecedents_1} in mind, it is not difficult to see how every convexity condition $\ccond$ as defined in \cite[Section~3]{jardon-sanchez_laustsen_taylor_tradacete_troitsky:2022} yields a convexity type $\ct$, such that the $\ccond$-convex Banach lattices as defined in \cite{jardon-sanchez_laustsen_taylor_tradacete_troitsky:2022} are precisely the $\ct$-convex Banach lattices as defined in the current paper. The present set-up is, however, more general, both where the antecedent and where the consequent are concerned.  Whereas the conditions on $x_1,\dots,x_n$ that are used in \cite{jardon-sanchez_laustsen_taylor_tradacete_troitsky:2022} are those of being pairwise disjoint or being positive, corresponding to the antecedents $\{a_n\}$ resp.\ $\{a_n^\prime\}$ in \cref{ex:antecedents_1}, the present formalism allows a variety of possible antecedents, which can be sets of arbitrary cardinality. On the consequent side, \cref{ex:convexity_type_3} shows that $(p,q)$-convexity is covered by the present formalism, whereas it is not by that in \cite{jardon-sanchez_laustsen_taylor_tradacete_troitsky:2022}.
\end{remark}

It can happen that the zero lattice is the only $\ct$-convex Banach lattice.  As an example where this is the case, take $n=1$, define $c_1(t_1)=2t_1$ and $c_2(t_1)=t_1$, and set $\ct=\big\{(\emptyset,\{(c_1,c_2)\};1)\big\}$. There is an easy criterion for $\ct$ to avoid such a collapse of \cref{sec:free_banach_lattices_over_pre-ordered_banach_spaces} to the study of the zero lattice. On considering one-dimensional Banach sublattices, we see that the existence of non-zero $\ct$-convex Banach lattices is equivalent to $\RR$ being $\ct$-convex. Therefore, {\bf we assume in the sequel that $\boldsymbol{\RR}$ is $\boldsymbol{\ct}$-convex.}

\section{Free vector lattices over pre-ordered vector spaces}\label{sec:free_vector_lattices_over_pre-ordered_vector_spaces}

\noindent The existence of free Banach lattices over pre-ordered Banach spaces in \cref{sec:free_banach_lattices_over_pre-ordered_banach_spaces} is a rather straightforward consequence of the existence of free vector lattices over pre-ordered vector spaces. The current sections is concerned with these. We show that they exist and{\textemdash}although this is not needed for the existence of the free Banach lattices{\textemdash}develop their theory a little further. For clarity, we recall that in the present paper vector lattices are not supposed to be Archimedean.

We apply \cref{def:free_object} in the following context. For $\catone$, we take the pre-ordered vector spaces as objects and the positive linear maps as morphisms. For  $\cattwo$, we take the vector lattices as objects and the vector lattice homomorphisms as morphisms. We define the functor $\functor$ from $\cattwo$ to $\catone$ by sending a vector lattice $\vl$ to the pre-ordered vector space $(\vl,\pospartofset{\vl})$ and viewing a vector lattice homomorphism as a positive linear map. Simplifying the notation and terminology a little, this leads to the following definition.

\begin{definition}\label{def:free_vector_lattice_over_a_pre-ordered_vector_space}
	A \emph{free vector lattice over the pre-ordered vector space $\povs$} is a pair $(j,\FVLpovs)$, where
	$\FVLpovs$ is a vector lattice and $j\colon\vs\to\FVLpovs$ is a positive linear map with the property that, for every positive linear map $\mor\colon\vs\to\vltwo$ into a vector lattice $\vltwo$, there exists a unique vector lattice homomorphism $\facmor\colon\FVLpovs\to\vltwo$ such that the diagram
	\begin{equation}\label{dia:free_vector_lattice}
		\begin{tikzcd}
			\vs\arrow[r, "j"]\arrow[dr, "\mor", swap]& \FVLpovs \arrow[d, "\facmor"]
			\\ & \vltwo
		\end{tikzcd}
	\end{equation}
	commutes.
\end{definition}

\begin{remark}\label{rem:free_vector_lattice_over_a_pre-ordered_vector_space}\quad
	\begin{enumerate}
		\item\label{part:free_vector_lattice_over_a_pre-ordered_vector_space_1} If $\vspos=\zerospace $, then every linear map $\mor\colon\vs\to\vltwo$ is positive. Hence $(j,\FVL[(\vs,\zerospace )])$ is what is generally called the free vector lattice over the vector space $\vs$.
		\item If $\FVLpovs$ exists, then standard argument shows that it is generated by $j(\vs)$ as a vector lattice.
		\item\label{part:free_vector_lattice_over_a_pre-ordered_vector_space_2} If $\FVLpovs$ exists, then it is clear that the map $j$ in diagram  \eqref{dia:free_vector_lattice} is injective if and only if the positive linear maps $\mor\colon \vs\to\vltwo$ into vector lattices $\vltwo$ separate the points of $\vs$. When $\vspos=\zerospace $, the linear maps into $\RR$ already do this, so that the free vector lattice over a vector space $\vs$ will contain $\vs$ as a linear subspace. This is not always the case. When $\vspos=\vs$, the only positive linear map $\mor$ in diagram \eqref{dia:free_vector_lattice}  is the zero map. Hence $\FVL[(V,V)]=\zerospace $ and $j$ is the zero map. We shall have more to say about the kernel of $j$ in \cref{res:free_vector_lattice_over_pre-ordered_vector_space_as_lattice_of_functions,res:free_vector_lattice_over_pre-ordered_banach_space_as_lattice_of_functions}.
	\end{enumerate}
\end{remark}

\begin{theorem}\label{res:existence_of_free_vector_lattice}
	Let $\povs$ be a pre-ordered vector space. There exists a free vector lattice $(j,\FVLpovs)$ over $\povs$.
\end{theorem}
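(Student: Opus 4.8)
The plan is to exploit the fact that (not necessarily Archimedean) vector lattices form a \emph{variety} of universal algebras: the signature consists of the abelian group operations, a unary operation for each scalar $\lambda\in\RR$, and the lattice operations $\vee,\wedge$, subject to purely equational (Riesz space) axioms. In such a variety the free algebra over any \emph{set} exists, and quotients by the congruence generated by any family of relations exist and carry the expected universal property. (It is precisely because we do not demand the Archimedean property{\textemdash}which is not equational{\textemdash}that these classical principles apply without an Archimedeanisation step.) So I would build $\FVLpovs$ as a quotient of the free vector lattice over the underlying \emph{set} of $\vs$, adding relations that force the insertion map to be linear and positive.

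Concretely, let $L$ be the free vector lattice over the set $\vs$, with set-theoretic insertion $\iota\colon\vs\to L$; by construction $\iota(\vs)$ generates $L$ as a vector lattice. Let $\theta$ be the congruence on $L$ generated by the relations $\iota(x+y)\sim\iota(x)+\iota(y)$ and $\iota(\lambda x)\sim\lambda\,\iota(x)$ for all $x,y\in\vs$ and $\lambda\in\RR$, together with $\iota(x)\vee 0\sim\iota(x)$ for all $x\in\vspos$. Write $q\colon L\to L/\theta$ for the quotient vector lattice homomorphism and set $j\coloneqq q\circ\iota\colon\vs\to L/\theta$. The first two families of relations make $j$ linear, and the third forces $j(\vspos)\subseteq\pos{(L/\theta)}$, so $j$ is a positive linear map; I define $\FVLpovs\coloneqq L/\theta$. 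Since $\iota(\vs)$ generates $L$ and $q$ is a surjective homomorphism, $j(\vs)=q(\iota(\vs))$ generates $\FVLpovs$, which already settles the final assertion.

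For the universal property, take a positive linear map $\mor\colon\vs\to\vltwo$ into a vector lattice $\vltwo$. Freeness of $L$ over the set $\vs$ yields a unique vector lattice homomorphism $\Phi\colon L\to\vltwo$ with $\Phi\circ\iota=\mor$. I then check that $\Phi$ respects the generating relations: linearity of $\mor$ gives $\Phi(\iota(x+y))=\mor(x)+\mor(y)=\Phi(\iota(x)+\iota(y))$ and likewise for scalars, while for $x\in\vspos$ positivity gives $\Phi(\iota(x)\vee 0)=\mor(x)\vee 0=\mor(x)=\Phi(\iota(x))$. Hence $\Phi$ is constant on $\theta$-classes, so it factors uniquely as $\Phi=\facmor\circ q$ for a vector lattice homomorphism $\facmor\colon\FVLpovs\to\vltwo$, whence $\facmor\circ j=\mor$. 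Uniqueness of $\facmor$ is immediate because $j(\vs)$ generates $\FVLpovs$, so any two homomorphisms out of $\FVLpovs$ agreeing on $j(\vs)$ coincide.

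The only genuine content lies in the background facts I am invoking{\textemdash}the existence of free vector lattices over sets and of quotients by congruences in the variety of vector lattices{\textemdash}and in choosing the relations so that they capture \emph{exactly} linearity and positivity, neither more nor less. Given those, the verification that a positive linear $\mor$ lifts to a $\theta$-respecting homomorphism is the crux, and it is routine. I would therefore regard the main (modest) obstacle as a bookkeeping one: making the congruence-and-lift argument precise, rather than any deep analytic difficulty; an alternative, less constructive route via the Freyd adjoint functor theorem is available but would obscure the explicit generation by $j(\vs)$ that the statement also requires.
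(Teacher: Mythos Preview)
Your argument is correct and follows the same overall strategy as the paper: start from a free object whose existence is guaranteed by universal algebra, then quotient by relations that encode the additional positivity constraint, and verify the universal property by lifting and factoring. The difference is only in the choice of starting point. The paper begins one level higher, taking the free vector lattice $\FVL[(\vs,\nullspace)]$ over the \emph{vector space} $\vs$ (whose existence it cites), and then quotients by the order ideal generated by $\{j(v)^{+}-j(v):v\in\vspos\}$ to impose positivity; you begin with the free vector lattice over the underlying \emph{set} of $\vs$ and impose linearity and positivity simultaneously via a single congruence. Your route is more self-contained and makes the dependence on the variety structure fully explicit, while the paper's route is more modular, isolating the passage from ``free over a vector space'' to ``free over a pre-ordered vector space'' as a single ideal quotient. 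Both arguments rest on the same universal-algebra foundation, and the verifications are essentially identical once the starting object is fixed.
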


\begin{proof}
	There exists a free vector lattice $(j,\FVL[(\vs,\zerospace )]))$ over $(\vs,\zerospace )$. This is a part of  \cite[Theorem~6.2]{de_jeu:2021}. Alternatively, one can infer this from the well-known fact that the free vector lattice over a set exists (see \cites{baker:1968,birkhoff:1942,bleier:1973,weinberg:1963}); the free vector lattice over a basis of $\vs$ is then as needed. \footnote{In both approaches, the existence ultimately relies on the existence theorem in universal algebra for free objects in equational classes over sets; see \cite[Corollary to  Theorem~2.10]{jacobson_BASIC_ALGEBRA_II_SECOND_EDITION:1989}, for example. We refer to \cite{de_jeu:2021} for a detailed and self-contained exposition of this material in the context of vector lattices and vector lattice algebras.}.
	
	Take the order ideal $I$ of $\FVL[(\vs,\zerospace )]$ that is generated by $\{j(v)^-:v\in \vspos\}$. Let $q\colon\FVL[(\vs,\zerospace )]\to\FVL[(\vs,\zerospace )]/I$ denote the quotient vector lattice homomorphism. We claim that $(q\circ j,\FVL[(\vs,\zerospace )]/I)$ is a free vector lattice over the pre-ordered vector space $\povs$.
	
	Take $v\in\vspos$. Then $[(q\circ j)(v)]^{\,-}=[q(j(v))]^-=q(j(v)^{\,-})=0$. Hence $q\circ j$ is positive, as required.
	
	Let $\mor\colon\vs\to\vltwo$ be a positive linear map into a vector lattice $\vltwo$. There exists a unique vector lattice homomorphism $\facmor\colon\FVL[(\vs,\zerospace )]\to\vltwo$ such that $\facmor\circ j=\mor$. Take $v\in\vspos$. Using the positivity of $\mor$ in the final step, we have $\facmor\big(j(v)^-\big)=\left[\facmor(j(v))\right]^{\,-}=\mor(v)^{\,-}=0$.
	Hence $I\subseteq\ker\mor$, which implies that there exists a unique vector lattice homomorphism $\fac{\facmor}\colon\FVL[(\vs,\zerospace )]/I\to\vltwo$ such that $\fac{\facmor}\circ q =\facmor$. Since the diagram
	\begin{equation*}
		\begin{tikzcd}
			\vs\arrow[r, "j"]\arrow[dr,"\mor"]& \FVL[(\vs,\zerospace )] \arrow[r,"q"]\arrow[d, "\facmor"]&\FVL[(\vs,\zerospace )]/I\arrow[dl,"\fac{\facmor}"]\\
			&\vltwo&
		\end{tikzcd}
	\end{equation*}
	is commutative, we have $\fac{\facmor}\circ(q\circ j)=\mor$. It remains to show uniqueness. Suppose that $\psi\colon\FVL[(\vs,\zerospace )]/I\to\vltwo$ is a vector lattice homomorphism such that  $\psi \circ(q\circ j)=\mor$. Then $(\psi\circ q)\circ j=\mor$, so $ \psi\circ q=\facmor$ and then $\psi=\fac{\facmor}$.
\end{proof}

\begin{remark}\label{rem:projection}
	Suppose that $\vs$ itself is a vector lattice. Then it is easy to see that  $(j\circ\fac{\id})\colon\FVLpovs\to\FVLpovs$, where $\id\colon\vs\to\vs$ is the identity map, is a positive projection onto $j(\vs)$.
\end{remark}

\cref{res:existence_of_free_vector_lattice} is sufficient as the starting point for the  construction of free Banach lattices over pre-ordered Banach spaces in \cref{sec:free_banach_lattices_over_pre-ordered_banach_spaces}. Nevertheless, we develop the material a little further, for its own interest and also to put later results in this paper and existing results and constructions in the literature into perspective.

It is well known that the free vector lattice over a set can be realised as a vector lattice of functions. The first occurrence of this result appears to be  \cite[Theorem~2.4]{baker:1968}; with a different proof it can also be found as \cite[Theorem~2.3]{bleier:1973}. The latter proof is based on convex analysis, which is also used in \cite[Theorem~2.3]{de_jeu_UNPUBLISHED:2020b} to show that the free vector lattice over a vector space can be realised as a vector lattice of functions on any linear subspace of its dual that separates its points.  In our next result, we generalise the latter in a more detailed form to the pre-ordered context.

\begin{theorem}\label{res:free_vector_lattice_over_pre-ordered_vector_space_as_lattice_of_functions}
	Let $\povs$ be a pre-ordered vector space.  Take a linear subspace $\lsdual$ of $\vsdual$ that separates the points of $V$, and let $\sigma(\vs,L^\sharp)$ denote the weak topology that $L^\sharp$ induces on $\vs$. Define the positive linear map $\Psi:V\to\Fun(L^\sharp_+)$ by setting
	\[
	[\Psi(v)](l^\sharp)\coloneqq l^\sharp(v)
	\]
	for $v\in V$ and $l^\sharp\in L^\sharp_+$. Then:
	 \begin{enumerate}
	 	\item for $v\in\vl$, $\Psi(v)\geq 0$ if and only if $v\in \overline{\vspos}^{\sigma(\vs,L^\sharp)}$;  \label{part:free_vector_lattice_over_pre-ordered_vector_space_as_lattice_of_functions_1}	
		\item $\Ker \Psi=\overline{\vspos}^{\sigma(\vs,L^\sharp)}\cap -\overline{\vspos}^{\sigma(\vs,L^\sharp)}$;\label{part:free_vector_lattice_over_pre-ordered_vector_space_as_lattice_of_functions_2}
		\item $L_+^\sharp=\zerospace$ if and only if $\overline{\vspos}^{\sigma(\vs,L^\sharp)}=\vs$. \label{part:free_vector_lattice_over_pre-ordered_vector_space_as_lattice_of_functions_3}		
	\end{enumerate}
	Take a free vector lattice $(j,\FVLpovsclosure)$ over $\povsclosure$. Then the unique vector lattice homomorphism $\fact{\Psi}$ making the diagram	
	\begin{equation*}\label{dia:free_vector_lattice_as-function_lattice}
		\begin{tikzcd}
			\vs\arrow[r, "j"]\arrow[dr, "\Psi", swap]& \FVLpovsclosure \arrow[d, "\fac{\Psi}"]
			\\ & \Fun(L^\sharp_+)
		\end{tikzcd}
	\end{equation*}
	commutative is injective, establishing a vector lattice isomorphism between the free vector lattice $\FVLpovsclosure$ and the Archimedean vector sublattice $\funlattice$ of $\Fun(L^\sharp_+)$ that is generated by $\Psi(V)$. Consequently, when we view $\Psi$ as a map from $\vs$ into $\funlattice$, $(\Psi,\funlattice)$ is a free vector lattice over $\povsclosure$.  The map $\Psi$ is injective if and only if $\overline{\vspos}^{\sigma(\vs,L^\sharp)}$ is a cone. For $l^\sharp\in\ls^\sharp_+$, the unique vector lattice homomorphism $\fac{l^\sharp}$ making the diagram
	\begin{equation*}\label{dia:free_vector_lattice_as-function_lattice_point_evaluation}
		\begin{tikzcd}
			\vs\arrow[r, "\Psi"]\arrow[dr, "l^\sharp", swap]& \funlattice \arrow[d, "\fac{l^\sharp}"]
			\\ & \mathbb R
		\end{tikzcd}
	\end{equation*}
commutative is given by $\fac{l^\sharp}(\freeelt)=\freeelt(l^\sharp)$ for $\freeelt\in\funlattice$, so that the $\fac{l^\sharp}$ for $l^\sharp\in\lsdualpos$ separate the points of $\funlattice$.
\end{theorem}

When $\vspos=\zerospace$, \cref{res:free_vector_lattice_over_pre-ordered_vector_space_as_lattice_of_functions} applies to every linear subspace $L^\sharp$ of $V^\sharp$ separating the points of $\vs$, yielding \cite[Theorem~2.3]{de_jeu_UNPUBLISHED:2020b}.

\begin{proof}
	We start with part~\ref{part:free_vector_lattice_over_pre-ordered_vector_space_as_lattice_of_functions_1},  from which the  parts~\ref{part:free_vector_lattice_over_pre-ordered_vector_space_as_lattice_of_functions_2} and~\ref{part:free_vector_lattice_over_pre-ordered_vector_space_as_lattice_of_functions_3} are immediate. It is clear that $\Psi(v)\geq 0$ when $v\in \overline{\vspos}^{\sigma(\vs,L^\sharp)}$. For the converse, take $v_0\in\vs$ such that $\Psi(v_0)\geq 0$. Suppose that $v_0\notin\overline{\vspos}^{\sigma(\vs,L^\sharp)}$. Then \cite[Theorems~3.4 and~3.10]{rudin_FUNCTIONAL_ANALYSIS_SECOND_EDITION:1991} show that there exist $\alpha,\beta\in\RR$ with $\alpha<\beta$ and $l_0^\sharp\in L^\sharp$, such that  $l_0^\sharp(v_0)<\alpha$ and $l_0^\sharp(v)>\beta$ for $v\in\overline{\vspos}^{\sigma(\vs,L^\sharp)}$. Since $\overline{\vspos}^{\sigma(\vs,L^\sharp)}$ is invariant under multiplication by non-negative scalars, we must have $l_0^\sharp(v)\geq 0$ for $v\in\overline{\vspos}^{\sigma(\vs,L^\sharp)}$, i.e., $l_0^\sharp\in L^\sharp_+$. We also have $l_0^\sharp(v_0)<\alpha<\beta<l_0^\sharp(0)=0$. This contradicts that $\Psi(v_0)\geq 0$. Thus $v_0\in\overline{\vspos}^{\sigma(\vs,L^\sharp)}$, as required.
	
	We now show that $\fact{\Psi}$ is injective, with a proof in the spirit of Bleier's paper \cite{bleier:1973}.  Its main part consist of establishing the following preparatory claim:
	
	\noindent\emph{Suppose that $v_1,\dotsc,v_q\in \vs$ are such that $\bigwedge_{r=1}^q \Psi(v_r)\leq 0$ in $\Fun(L^\sharp_+)$. Then $\bigwedge_{r=1}^q j(v_r)\leq 0$ in $\FVLpovsclosure$.}
	
	As the first step to see this, suppose that the intersection of the convex hull of $\{v_1,\dotsc,v_q\}$ and $-\overline{\vspos}^{\sigma(\vs,L^\sharp)}$ were empty. As this convex hull, being the continuous image of a finite dimensional simplex, is compact in the $\sigma(\vs,L^\sharp)$-topology, there exist $\alpha,\beta\in\RR$ with $\alpha<\beta$ and $l_0^\sharp\in L^\sharp$, such that  $l_0^\sharp(v_r)<\alpha$ for $r=1,\dotsc,q$ and $l_0^\sharp(-v)>\beta$ for $v\in\vspos$. As earlier, this implies that $-l_0^\sharp\in L^\sharp_+$ and that $\alpha<\beta<0$. We then have $\bigwedge_{r=1}^q (-l_0^\sharp)(v_r)>-\alpha>0$, contradicting that $\bigwedge_{r=1}^q \Psi(v_r)\leq 0$. We conclude that the intersection is not empty.
	
	As the second step, we can now choose non-negative $t_1,\dotsc,t_q$ with $\sum_{p=1}^q t_p=1$ such that $\sum_{p=1}^q t_p v_p\leq 0$. We have $t_p\bigwedge_{r=1}^q j(v_r)\leq t_p j(v_p)$ for $p=1,\dotsc,q$, so that
	\[
	\bigwedge_{r=1}^q j(v_r)=\sum_{p=1}^q \left(t_p \bigwedge_{r=1}^q j(v_r)\right)\leq \sum_{p=1}^q t_p j(v_p)=j\left(\sum_{p=1}^q t_pv_p\right)\leq 0,
	\]
	where the positivity of $j$ was used in the final step. Our claim has been established.
	
	We can now show the injectivity of $\fact{\Psi}$. Suppose that $\freeelt\in \FVLpovsclosure$ and that $\fact{\Psi}(\freeelt)=0$. Since $\FVLpovsclosure$ is generated as a vector lattice by its linear subspace $j(\vs)$, there exist an integer $N\geq 1$, integers $M_k\geq 1$ for $k=1,\dotsc,N$, and elements $v_{kl}$ of $\vs$ for $k=1,\dotsc,N$ and $l=1,\dotsc,M_k$ such that $\freeelt=\bigvee_{k=1}^N\bigwedge_{l=1}^{M_k} j(v_{kl})$; see \cite[Exercise~4.1.8]{aliprantis_burkinshaw_POSITIVE_OPERATORS_SPRINGER_REPRINT:2006}, for example. Hence $\fact{\Psi}(\freeelt)=\bigvee_{k=1}^N\bigwedge_{l=1}^{M_k} \fact{\Psi}(\left(j(v_{kl})\right)=\bigvee_{k=1}^N\bigwedge_{l=1}^{M_k} \Psi(v_{kl})=0$. Hence $\bigwedge_{l=1}^{M_k} \Psi(v_{kl})\leq 0$ for $k=1,\dotsc,N$. By the claim, we also have  $\bigwedge_{l=1}^{M_k} j(v_{kl})\leq 0$ for $k=1,\dotsc,N$, implying that $\freeelt \leq 0$. The fact that $\fact{\Psi}(-\freeelt)=0$ yields that also $-\freeelt\leq 0$. Hence $\freeelt=0$, as desired.
	
	The remaining statements are now easily verified.
	
\end{proof}

Clearly, the vector lattice $\funlattice$ in \cref{res:free_vector_lattice_over_pre-ordered_vector_space_as_lattice_of_functions} consists of positively homogeneous functions on $L\sp\sharp_+$. Let $\pobs$ be a pre-ordered Banach space.  \cref{res:free_vector_lattice_over_pre-ordered_vector_space_as_lattice_of_functions} can be applied with $\vs=\bs$ and $L^\sharp=\bsdual$.
The vector lattice of positively homogeneous function on $\bsdualpos$ is isomorphic to that of positively homogenous functions on $\ballbsdualpos$ via the restriction map. With this in mind, and recalling the definition of the $\delta_x$ in \eqref{eq:delta_definition}, the following is an immediate consequence of \cref{res:free_vector_lattice_over_pre-ordered_vector_space_as_lattice_of_functions}.

\begin{corollary}\label{res:free_vector_lattice_over_pre-ordered_banach_space_as_lattice_of_functions}
	Let $\pobs$ be a pre-ordered Banach space. Let $\funlattice$ denote the vector sublattice of $\contballposh$ that is generated by $\{\delta_x: x\in \bs\}$, and define $\delta\colon\bs\to\funlattice$ by setting $\delta(x)\coloneqq \delta_x$ for $x\in\bs$. Then:
	\begin{enumerate}
		\item for $x\in\bs$, $\delta_x\geq 0$ if and only if $x\in\overline{\bspos}$:
		\item $\Ker \delta=\overline{\bspos}\cap-\overline{\bspos}$;
		\item $\ballbsdualpos=\zerospace$ if and only if $\overline{\bspos}=\bs$;
		\item \begin{enumerate}
			\item $(\delta,\funlattice)$ is a free vector lattice over the pre-ordered vector space $\pobsclosure$;
			\item the map $\delta$ is injective if and only if $\overline{\bspos}$ is a cone;
			\item for $\varphi\in\ballbsdualpos$, the unique vector lattice homomorphism $\facmor$ making the diagram
			\begin{equation*}\label{dia:free_vector_lattice_as-function_lattice_point_evaluation_bs}
				\begin{tikzcd}
					\bs\arrow[r, "\delta"]\arrow[dr, "\mor", swap]& \funlattice \arrow[d, "\facmor"]
					\\ & \mathbb R
				\end{tikzcd}
			\end{equation*}
			commutative is given by $\facmor(\freeelt)=\freeelt(\mor)$ for $\freeelt\in\funlattice$, so that the $\facmor$ for $\mor\in\ballbsdualpos$ separate the points of $\funlattice$.
				\end{enumerate}
			\end{enumerate}
\end{corollary}

\begin{remark}\label{rem:escaped}
When $\bspos=\zerospace$, \cref{res:free_vector_lattice_over_pre-ordered_banach_space_as_lattice_of_functions} gives a realisation as a vector lattice of functions on $\ballbsdual$ of what is generally called the free vector lattice over the vector space $\bs$; this was already observed in \cite{de_jeu_UNPUBLISHED:2020b}. Using this fact, which is a special case of \cref{res:free_vector_lattice_over_pre-ordered_vector_space_as_lattice_of_functions}, could have simplified later papers on free Banach lattices somewhat. In \cite[p.18]{oikhberg_taylor_tradacete_troitsky:2024}, for example, the lattice-linear functional calculus is used to proved that $(\delta,\funlattice)$ has the pertinent universal property for $(\bs,\zerospace)$ with respect to Archimedean vector lattices as codomains $F$ in diagram \ref{dia:free_vector_lattice}. As is now clear, the Archimedean property can be omitted, and the lattice-linear functional calculus is not needed.
\end{remark}

Although they are not at the core of the present paper, the following consequences of \cref{res:free_vector_lattice_over_pre-ordered_vector_space_as_lattice_of_functions} should not be left unnoticed. The first one generalises \cite[Theorem~2.4]{baker:1968} and \cite[Theorem~2.3]{bleier:1973} on the realisation of the free vector lattice over a set as a function lattice.

\begin{corollary}\label{res:free_vector_lattice_over_arbitrary_pointred_set}
	Let $S$ be a non-empty set, and let $\pospartofset{S}$ be a \uppars{possibly empty} subset of $S$. Set
	\[
	F\coloneqq\{f\colon S\to\RR: f\text{ is positive on } \pospartofset{S}\}.
	\]
	For $s\in S$, define $j(s)\colon F\to\RR$ by setting
	\[
	[j(s)] (f)\coloneqq f(s)
	\]
	for $f\in F$. Let $\funlattice$ be the vector sublattice of $\Fun(F)$ that is generated by $j(S)$. Then $j\colon S\mapsto\funlattice$ is positive on $\pospartofset{S}$ and, for every map $\mor \colon S\to\vl$ into a vector lattice $\vl$ that is positive on $\pospartofset{S}$, there exists a unique vector lattice homomorphism making the diagram
	\begin{equation*}
		\begin{tikzcd}
			S \arrow[r, "j"]\arrow[dr, "\mor", swap]& \funlattice\arrow[d, "\facmor"]
			\\ & \vl
		\end{tikzcd}
	\end{equation*}
	commutative.
	\end{corollary}
	
	\begin{proof}
		Let $\vs$ be the free vector space over $S$. It has a basis $\{e_s:s\in S\}$, and we let $\pospartofset{\vs}$ be the cone generated by  $\{e_s:s\in \pospartofset{S}\}$. Its dual $\vsdual$ can be naturally identified with the real-valued functions on $S$, and then $\pospartofset{\vsdual}$ corresponds to $F$. The maps from $S$ into a vector lattice $\vl$ that are positive on $S$ correspond to positive linear maps from $\vs$ into $\vl$. The statements follow easily from these observations and an application of \cref{res:free_vector_lattice_over_pre-ordered_vector_space_as_lattice_of_functions} to $\povs$ with $\lsdual=\vsdual$.
	\end{proof}
	
For finite sets, \cref{res:free_vector_lattice_over_arbitrary_pointred_set} yields the following generalisation of the lattice-linear functional calculus.

\begin{corollary}\label{res:lattice_linear_calculus_generalised}
	Let $k,l\geq 0$ not both be zero, and let $\funlattice$ be the vector sublattice of $\Fun(\RR^k\times \RR_+^l)$ that is generated by the canonically defined coordinate functionals $t_1,\dotsc,t_k$ and $t_1^\prime,\dots,t_l^\prime$ on $\RR^k\times \RR_+^l$. Suppose that $\vl$ is a vector lattice, that $x_1,\dotsc,x_k\in\vl$, and that $x_1^\prime,\dotsc,x_l^\prime\in\vlpos$. Then there exists a unique vector lattice homomorphism $\psi\colon\funlattice\to\vl$ such that $\psi(t_i)=x_i$ for $i=1,\dotsc k$ and $\psi(t_i^\prime)=x_i^\prime$ for $i=1,\dotsc l$.
\end{corollary}

\begin{proof}
	Take a set $S$ with $k+l$ elements and a subset $\pospartofset{S}$ of it with $l$ elements, and apply \cref{res:free_vector_lattice_over_arbitrary_pointred_set}, where $F$ can now be identified with $\RR^k\times \RR_+^l$.
\end{proof}

\section{Free $\ct$-convex Banach lattices over pre-ordered Banach spaces}\label{sec:free_banach_lattices_over_pre-ordered_banach_spaces}

\noindent
In this section, we define the free $\ct$-convex Banach lattice $\FBLCpobs$ over a pre-ordered Banach space $\pobs$ and show that it exists. We also establish a number of its properties that can be derived from its definition. At the end of the section, we compare our categorical definition of $\FBLCpobs$ to that of the free Banach lattice over a Banach space in \cite{aviles_rodriguez_tradacete:2018}.

Let $\ct$ be a convexity type. In the general context of \cref{def:free_object}, we take for $\catone$ the pre-ordered Banach spaces as objects and the positive contractions as morphisms. Note that, under our standing assumption that $\RR$ be $\ct$-convex, $\ballbsdualpos$ consists of morphisms in $\catone$ for every pre-ordered Banach space $(\bs,{\bs^+})$. For $\cattwo$, we take the $\ct$-convex Banach lattices as objects and the contractive vector lattice homomorphisms as morphisms. The functor $\functor$ is defined by sending a Banach lattice $\bl$ to the pre-ordered Banach space $(\bl,\blpos)$ and viewing a contractive vector lattice homomorphism as a positive contraction. Simplifying the notation and terminology a little as was done in \cref{def:free_vector_lattice_over_a_pre-ordered_vector_space}, this leads to the following definition.

\begin{definition}\label{def:free_C-convex_banach_lattice_over_a_pre-ordered_banach_space}
	Let $\ct$ be a convexity type. A \emph{free $\ct$-convex Banach lattice over the pre-ordered Banach $\pobs$} is a pair $(j,\FBLCpobs)$, where
	$\FBLCpobs$ is a $\ct$-convex Banach lattice and $j\colon\bs\to\FBLCpobs$ is a positive contraction with the property that, for every positive contraction $\mor\colon\bs\to\bltwo$ into a $\ct$-convex Banach lattice $\bltwo$,  there exists a unique contractive vector lattice homomorphism $\facmor\colon\FBLCpobs\to\bltwo$ such that the diagram
	\begin{equation}\label{dia:free_C-convex_banach_lattice}
		\begin{tikzcd}
			\bs\arrow[r, "j"]\arrow[dr, "\mor", swap]& \FBLCpobs \arrow[d, "\facmor"]
			\\ & \vltwo
		\end{tikzcd}
	\end{equation}
	commutes.
\end{definition}

\begin{remark}\label{rem:comments_on_definition_of_free_banach_lattice}\quad
	\begin{enumerate}
	\item If $\FBLCpobs$ exists, then it is uniquely determined up to an isometric vector lattice isomorphism.  Indeed, for any two free objects, there are mutually inverse contractive vector lattice isomorphisms between them. These contractions must then, in fact, be isometries. Also by general principles, $j(\bs)$ generates $\FBLCpobs$ as a Banach lattice.
	\item A contraction from $\bs$ to a Banach lattice is positive on $\bspos$ if and only if it is positive on $\overline{\bspos}$. It is immediate from this that a free $\ct$-convex Banach lattice over $(\bs,\overline{\bspos})$ is also a free $\ct$-convex Banach lattice over $\pobs$, and vice versa.
	\end{enumerate}
\end{remark}

It will become clear below that $\FBLCpobs$ can be more or less routinely constructed from $\FVLpobs$, which we know to exist. Before proceeding to this construction, we point out two properties of $\FBLCpobs$ that are a priori clear, should it exist. None of these results is used in the construction as such, but they may serve as a motivation for it.

\begin{lemma}\label{res:norm_formula}
	Let $\ct$ be a convexity type, and suppose that $(j,\FBLCpobs)$ is a free $\ct$-convex Banach lattice over the pre-ordered Banach space $\pobs$.
	For $\freeelt\in\FBLCpobs$,
	\begin{equation}\label{eq:norm_formula}
	\norm{\freeelt}=\max_\mor\norm{\facmor(\freeelt)},
	\end{equation}
	where $\mor$ runs over the positive contractions $\mor\colon\bs\to\bltwo_\mor$ into $\ct$-convex Banach lattices $\bltwo_\mor$. In particular, for $x\in\bs$, we have
	\begin{equation}\label{eq:norm_formula_special}
		\norm{j(x)}=\max_\mor\norm{\mor(x)},
	\end{equation}
	where $\mor$ runs over the positive contractions $\mor\colon\bs\to\bltwo_\mor$ into $\ct$-convex Banach lattices $\bltwo_\mor$.
\end{lemma}

\begin{proof} For $\mor=j$, $\facmor$ is the identity operator, making clear that \eqref{eq:norm_formula} holds. The expression for $\norm{j(x)}$ is then clear.
\end{proof}

\begin{theorem}\label{res:FVL_embeds}
	Let $\ct$ be a convexity type, and suppose that $(j,\FBLCpobs)$ is a free $\ct$-convex Banach lattice over the pre-ordered Banach space $\pobs$.
	 Then:
	 \begin{enumerate}
	 	\item for $x\in\bs$, $j(x)\geq 0$ if and only if $x\in\overline{\bspos}$;
	 	\item $\Ker j=\overline{\bspos}\cap - \overline{\bspos}$, so that $j$ is injective if and only if $\overline{\bspos}$ is a cone.
	 \end{enumerate}
 Let $\vl$ be the dense vector sublattice of $\FBLCpobs$ that is generated by $j(\bs)$, and view $j$ as a positive linear map from $\bs$ into $E$. Then:
	\begin{enumerate}[resume]
		\item $(j,E)$ is a free vector lattice over the pre-ordered vector space $\pobsclosure$.
	\end{enumerate}
\end{theorem}

\begin{proof} Take a free vector lattice $(j_{\po},\FVLpobsclosure)$ over the pre-ordered vector space $\pobsclosure$.
	There is a unique vector lattice homomorphism $\fac{j}\colon\!\FVLpobsclosure\!\to\!\FBLCpobs$ such that the diagram
\begin{equation*}
	\begin{tikzcd}
		\bs\arrow[r, "j_{\po}"]\arrow[dr,"j", swap]& \FVLpobsclosure \arrow[d, "\fac{j}"]\\
		&\FBLCpobs
	\end{tikzcd}
\end{equation*}
is commutative. Hence $\fac{j}\big(\FVLpobsclosure\big)$ coincides with the dense vector sublattice of $\FBLCpobs$ that is generated by $j(\bs)$. We claim that $\fac{j}$ is injective. To see this, take a positive contraction $\mor\colon\bs\to\bltwo$ into a $\ct$-convex Banach lattice $\bltwo$. There exist a unique contractive vector lattice homomorphism $\facmor\colon\FBLCpobs\to\bltwo$ such that $\facmor\circ j=\mor$, and a unique vector lattice homomorphism $\fac{\mor_{\po}}\colon\FVLpobsclosure\to\vltwo$ such that $\fac{\mor_{\po}}\circ j_{\po}=\mor_{\po}$; here the subscript in $\mor_{\po}$ indicates again that we view $\mor$ as a positive linear map between the pre-ordered vector spaces $\pobsclosure$ and $(F,\pospartofset{F})$. For $x\in\bs$, we have
\begin{equation*}
	 \left(\facmor\circ\fac{j}\right)(j_{\po}(x))=\facmor(j(x))=\mor(x)=\mor_{\po}(x)=\fac{\mor_{\po}}(j_{\po}(x)).
\end{equation*}
Since they agree on a generating subset of $\FVLpobsclosure$, we have $\facmor\circ \fac{j}=\fac{\mor_{\po}}$.
As the $\facmor$ thus obtained separate the points of $\FBLCpobs$ by \cref{res:norm_formula}, we now see that, for $\freeelt\in\FVLpobsclosure$, $\fac{j}(\freeelt)=0$ if and only if $\fac{\mor_{\po}}(\freeelt)=0$ for all positive contractions $\mor$ from $\bs$ into $\ct$-convex Banach lattices, seen as positive linear maps with the pre-ordered vector space $\pobsclosure$ as domain. Thus, if $\fac{j}(\freeelt)=0$, then certainly $\fac{\mor_{\po}}(\freeelt)=0$ for all $\mor\in\ballbsdualpos$. By \cref{res:free_vector_lattice_over_pre-ordered_banach_space_as_lattice_of_functions}, this is already sufficient to imply that $\freeelt=0$. We conclude that $\fac{j}$ is injective, as claimed.

The statements in the theorem now follow easily from this and \cref{res:free_vector_lattice_over_pre-ordered_vector_space_as_lattice_of_functions}.
\end{proof}

We are not aware of a reference for the following lemma, which, together with \cref{res:convexity}, allows a short argument to show that $\FBLCpobs$ as constructed below is actually $\ct$-convex.  We include the easy proof.

\begin{lemma}\label{res:norming}
	Let $\bs$ be a normed space. Suppose that, for all $x$ in a dense subset of $\bs$,
	\begin{equation}\label{eq:norming}
		\norm{x}=\sup_\mor\norm{\mor(x)}
	\end{equation}
	where $\mor$ runs over a class of contractions $\mor\colon\bs\to Y_\mor$ into normed spaces $Y_\mor$. Then \eqref{eq:norming} holds for all $x\in\bs$.
\end{lemma}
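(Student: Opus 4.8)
The statement asks to upgrade an identity of the form $\norm{x}=\sup_\mor\norm{\mor(x)}$ from a dense subset to the whole space, where the $\mor$ are contractions. The plan is to observe first that one of the two inequalities is entirely free: since each $\mor\colon\bs\to\bs_\mor$ is a contraction, $\norm{\mor(x)}\leq\norm{x}$ for \emph{every} $x\in\bs$, and hence $\sup_\mor\norm{\mor(x)}\leq\norm{x}$ automatically, with no density hypothesis needed. Thus the only content is the reverse inequality $\norm{x}\leq\sup_\mor\norm{\mor(x)}$, which is where density enters.

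For the reverse inequality I would run a standard $\veps/3$ argument. Fix $x\in\bs$ and $\veps>0$. Using density, choose $y$ in the dense subset with $\norm{x-y}<\veps$. Applying the hypothesis to $y$ gives $\norm{y}=\sup_\mor\norm{\mor(y)}$, so there is a contraction $\mor$ in the class with $\norm{\mor(y)}>\norm{y}-\veps$. Now transfer this estimate from $y$ to $x$ using that $\mor$ is a contraction:
\[
\norm{\mor(x)}\geq\norm{\mor(y)}-\norm{\mor(x-y)}\geq\norm{\mor(y)}-\norm{x-y}>\bigl(\norm{y}-\veps\bigr)-\veps.
\]
Combining this with $\norm{y}\geq\norm{x}-\norm{x-y}>\norm{x}-\veps$ yields $\norm{\mor(x)}>\norm{x}-3\veps$, and therefore $\sup_\mor\norm{\mor(x)}>\norm{x}-3\veps$. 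Since $\veps>0$ was arbitrary, $\sup_\mor\norm{\mor(x)}\geq\norm{x}$.

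Putting the two inequalities together gives $\norm{x}=\sup_\mor\norm{\mor(x)}$ for all $x\in\bs$, as required. There is no genuine obstacle here: the argument is purely metric and uses only that the morphisms are norm-nonincreasing together with the triangle inequality and density. The one point worth stating cleanly is that the supremum is approximated to within $\veps$ at the nearby point $y$ (not at $x$ itself), and the contractivity of $\mor$ is exactly what lets that near-optimal choice be carried over to $x$; no compactness or attainment of the supremum is needed, so the result holds verbatim whether or not the supremum is a maximum.
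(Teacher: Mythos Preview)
Your proof is correct and follows essentially the same $\veps/3$ density argument as the paper: one inequality is free from contractivity, and for the other you approximate $x$ by a point $y$ in the dense set, pick a near-optimal $\mor$ for $y$, and transfer back to $x$ using the triangle inequality and contractivity. The only cosmetic difference is that the paper takes $\norm{x-y}<\veps/3$ so that the final bound comes out as $\veps$ rather than $3\veps$.
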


\begin{proof}
	Take $x\in\bs$. It is clear that $\norm{x}\geq \sup_\mor\norm{\mor(x)}$. For the reverse inequality, take $\varepsilon>0$ and then an $x_0$ with $\norm{x-x_0}<\varepsilon/3$ such that $\norm{x_0}=\sup_\mor\norm{\mor(x_0)}$. There exists a $\mor_0$ in the class such that $\norm{x_0}\leq \norm{\mor_0 (x_0)}+\varepsilon/3$. Then
	\begin{align*}
		\norm{x}&\leq\norm{x-x_0}+\norm{x_0}\\
		&\leq\varepsilon/3+\norm{\mor_0(x_0)}+\varepsilon/3\\
		&\leq \norm{\mor_0(x_0-x)}+\norm{\mor_0(x)}+2\varepsilon/3\\
		&\leq \norm{\mor_0(x)}+\varepsilon\\
		&\leq \sup_\mor\norm{\mor(x)}+\varepsilon.
	\end{align*}
	As $\varepsilon$ was arbitrary, we are done.
\end{proof}

We now proceed with the construction of the free $\ct$-convex Banach lattice over a pre-ordered Banach space $\pobs$. It uses only the existence of $\FVLpobs$ from \cref{res:existence_of_free_vector_lattice}, and \cref{res:convexity,res:norming}.

\begin{proposition}
Let $\pobs$ be a pre-ordered Banach space, and let $\ct$ be a convexity type. Then there exists a free $\ct$-convex Banach lattice  $(j,\FBLCpobs)$ over $\pobs$.
\end{proposition}

\begin{proof}
We start by viewing $\pobs$ as a pre-ordered vector space and take the free vector lattice $(j_{\po},\FVLpo{\bs})$ over it, which we know to exist from \cref{res:existence_of_free_vector_lattice}. If $\mor\colon \bs\to\bltwo$ is a positive contraction into a $\ct$-convex Banach lattice $\bltwo$, then there exists a unique vector lattice homomorphism $\facmor\colon\FVLpo{\bs}\to\bltwo$ such that the diagram
\begin{equation*}
	\begin{tikzcd}
		\bs\arrow[r, "j_{\po}"]\arrow[dr, "\mor", swap]& \FVLpo{\bs} \arrow[d, "\facmor"]
		\\ & \bltwo
	\end{tikzcd}
\end{equation*}
commutes. Take $\xi \in\FVLpo{\bs}$. Motivated by \cref{res:norm_formula}, we set
\begin{equation}\label{eq:semi-norm}
	\rho(\xi )\coloneqq\sup_\mor\norm{\facmor(\xi )}
\end{equation}
where $\mor$ runs over the positive contractions $\mor\colon\bs\to\bltwo_\mor$ into $\ct$-convex Banach lattices $\bltwo_\mor$.
Evidently, $\rho(j_{\po}(x))=\sup_\mor\norm{\mor(x)}\leq\norm{x}$ for $x\in\bs$. Since the subset of $\FVLpo{\bs}$ where $\rho$ is finite is easily seen to be a vector sublattice, and since it contains its generating linear subspace $j_{\po}(\bs)$, $\rho$ is finite everywhere. It is a vector lattice semi-norm.
The kernel $\ker \rho(f)$ is an order ideal of $\FVLpo{\bs}$. Let $q\colon\FVLpo{\bs}\to\FVLpo{\bs}/\ker \rho$ denote the quotient vector lattice homomorphism, and introduce a vector lattice norm $\norm{\,\cdot\,}$ on $\FVLpo{\bs}/\ker \rho$ by setting $\norm{q(\xi )}\coloneqq\rho(\xi )$ for $\xi \in\FVLpo{\bs}$. Let $\FBLCpobs$ be the completion of $(\FVLpo{\bs}/\ker \rho,\norm{\,\cdot\,})$ and view $q$ as a map from $\FVLpo{\bs}$  into $\FBLCpobs$. We claim that, as the notation suggests, $(q\circ j_{\po},\FBLCpobs)$ is a free $\ct$-convex Banach lattice over $\pobs$.

There are a number of things to be checked. First of all, $q\circ j_{\po}$ is positive since $q$ and $j_{\po}$ are; it is contractive as $\norm{(q\circ j_{\po})(x)}=\rho(j_{\po}(x))=\sup_\mor\norm{\mor(x)}\leq\norm{x}$ for $x\in\bs$. Next, take a positive contraction $\mor\colon\bs\to\bltwo$ into a $\ct$-convex Banach lattice $\bltwo$. It follows from \eqref{eq:semi-norm} that $\ker \rho\subseteq\ker\facmor$, so there exists a unique vector lattice homomorphism $\fac{\facmor}\colon\FVLpo{\bs}/\ker \rho\to\bltwo $ such that $\fac{\facmor}\circ q=\facmor$. For $\xi \in\FVLpo{\bs}$, we have $\norm{\fac{\facmor}(q(\xi ))}=\norm{\facmor(\xi )}\leq\rho(\xi )=\norm{q(\xi )}$. Hence $\fac{\facmor}$ is a contractive vector lattice homomorphism. It extends to a contractive vector lattice homomorphism $\fac{\fac{\facmor}}\colon\FBLCpobs\to\bltwo$. It is a consequence of the construction that $\fac{\fac{\facmor}}\circ(q\circ j_{\po})=\mor$. Furthermore, since $j_{\po}(\bs)$ generates $\FVLpo{\bs}$ as a vector lattice, $(q\circ j_{\po})(\bs)$ generates $\FBLCpobs$ as a Banach lattice. Hence $\fac{\fac{\facmor}}$ is the only contractive vector lattice homomorphism (by automatic continuity: even the only vector lattice homomorphism) such that $\fac{\fac{\facmor}}\circ(q\circ j_{\po})=\mor$.

The construction is summarised in the following commutative diagram:
\begin{equation*}
	\begin{tikzcd}
		\bs\arrow[r, "j_{\po}"]\arrow[ddr,"\mor"]& \FVLpo{\bs}\arrow[r,"q"]\arrow[dd,"\facmor"]&\FVLpo{\bs}/\ker\rho_a\arrow[ddl,"\fac{\facmor}"]\arrow[r,hookrightarrow]&\FBLCpobs\arrow[ddll,"\fac{\fac{\facmor}}"]\\
		\\
		&\bltwo&&
	\end{tikzcd}
\end{equation*}

It remains to verify that $\FBLCpobs$ is $\ct$-convex. For this, take $\xi \in\FVLpo{\bs}$. Then
\[
\norm{q(\xi )}=\rho(\xi )=\sup_\mor\norm{\facmor(\xi )}=\sup_\mor\Bignorm{\fac{\facmor}(q(\xi ))}=\sup_\mor\Bignorm{\fac{\fac{\facmor}}(q(\xi ))}.
\]
Since $q(\FVLpo{\bs})$ is dense in $\FBLCpobs$, \cref{res:norming} yields that
\[
\norm{\freeelt}=\sup_\mor\Bignorm{\fac{\fac{\facmor}}(\freeelt)}
\]
for $\freeelt\in\FBLCpobs$. By \cref{res:convexity}, $\FBLCpobs$ is $\ct$-convex since all codomains of the $\fac{\fac{\facmor}}$ are.
\end{proof}

We shall now establish some further properties of $(j,\FBLCpobs)$. With the exception of \cref{ex:not_an_isometry}, they are all essentially direct consequences of its universal property, just as \cref{res:norm_formula} and \cref{res:FVL_embeds}. The concrete construction of it in the proof of \cref{res:existence_of_free_vector_lattice} plays no role.

A positive contractive $\mor$ in diagram \eqref{dia:free_C-convex_banach_lattice} has a factoring vector lattice homomorphism $\facmor$ that is from the definitions only known to be contractive. However, more is true.

 \begin{lemma}\label{res:equality_of_norms}
 	Let $\ct$ be a convexity type, and let $(j,\FBLCpobs)$ be a free $\ct$-convex Banach lattice over the pre-ordered Banach space $\pobs$. If $\mor\colon\bs\to\bltwo$ is a positive contraction into a $\ct$-convex Banach lattice, then $\norm{\mor}=\norm{\facmor}$.
 	Furthermore, if $T\colon\bs\to\vltwo$ is an operator, then there exists a unique vector lattice homomorphism $\fac{T}\colon\FBLCpobs\to\vltwo$ making diagram \eqref{dia:free_C-convex_banach_lattice} commutative. Moreover, $\norm{\fac{T}}=\norm{T}$.
 \end{lemma}

 \begin{proof}
 	We may suppose that $\mor\neq 0$. As $\mor=\facmor\circ j$, we have $\norm{\mor}\leq\norm{\facmor}$. For the reverse inequality, we observe that $\facmor$ is even the unique vector lattice homomorphism that makes diagram \eqref{dia:free_C-convex_banach_lattice} commutative. To see this, we note that any such vector lattice homomorphism $\facmor$ is uniquely determined on $j(\bs)$ and then also on the vector sublattice generated by $j(\bs)$. As this is dense and $\facmor$ is automatically continuous, $\facmor$ is indeed unique.\footnote{This argument is also used in the proof of  \cite[Corollary~3.5]{jardon-sanchez_laustsen_taylor_tradacete_troitsky:2022}.} Now $\mor/\norm{\mor}$ is also a positive contraction, so it has an accompanying contractive vector lattice homomorphism $\fac{(\mor/\norm{\mor})}$ such that $\mor/\norm{\mor}=\fac{(\mor/\norm{\mor})}\circ j$. Since the vector lattice homomorphism $\facmor/\norm{\mor}$ also satisfies $\mor/\norm{\mor}=\left(\facmor/\norm{\mor}\right)\circ j$, we conclude that $\fac{(\mor/\norm{\mor})}=\facmor/\norm{\mor}$. As $\fac{(\mor/\norm{\mor})}$ is contractive, we have $\norm{\facmor}\leq\norm{\mor}$, as desired. The statements for general positive $T$ follow by scaling from the uniqueness argument above and the equality of norms in the contractive case.
 \end{proof}

We now concentrate on the properties of the positive contraction $j$. Its being the zero map or not is obviously correspond to  $\FBLCpobs$ being the zero lattice or not, but one can say a little more.

\begin{proposition}\label{res:degeneracy} Let $\ct$ be a convexity type, and let $(j,\FBLCpobs)$ be a free $\ct$-convex Banach lattice over the pre-ordered Banach space $\pobs$. Then the following are equivalent:
		\begin{enumerate}
			\item $\FBLCpobs=\zerospace$;
			\item $\ballbsdualpos= \zerospace$;
			\item $\overline{\bspos}= \bs$;
			\item $j=0$.
		\end{enumerate}
	If $\FBLCpobs\neq \zerospace$, then $\norm{j}=1$.
\end{proposition}

\begin{proof} If $\FBLCpobs=\zerospace$, then $\ballbsdualpos=\zerospace$ by diagram \eqref{dia:free_C-convex_banach_lattice}. The latter implies that  $\overline{\bspos}=\bs$ by \cref{res:free_vector_lattice_over_pre-ordered_banach_space_as_lattice_of_functions}. If $\overline{\bspos}=\bs$, then clearly $j=0$. If $j=0$, then $\FBLCpobs=\zerospace$ as it is generated by $j(\bs)$.
	
	 Suppose that $\FBLCpobs\neq\zerospace $. By what has already been established, there exists a non-zero $x^*\in\ballbsdualpos$. Using \cref{res:equality_of_norms}, we see that $\norm{x^*}=\norm{\fac{x^*}\circ j}\leq\norm{\fac{x^*}}\norm{j}=\norm{x^*}\norm{j}$. Hence $\norm{j}\geq 1$, so that $\norm{j}=1$.
\end{proof}

It is clear from \cref{res:FVL_embeds} that $j$ is injective if and only if $\overline{\bspos}$ is a cone, that it is bipositive if and only if $\bspos$ is closed, and that it is a bipositive injection if and only if $\bspos$ is a closed cone. The next result clarifies when it is an embedding of $\bs$ as a pre-ordered Banach space.
We recall that the positive cone $\bspos$ of a pre-ordered Banach $\pobs$ space is \emph{normal} if there exists a $c>0$ such that $\norm{x}\leq c\norm{y}$ for all $x,y\in\bs$ such that $0\leq x\leq y$.

\begin{theorem}\label{res:embedding_as_an_ordered_banach_space} Let $\ct$ be a convexity type, and let $(j,\FBLCpobs)$ be a free $\ct$-convex Banach lattice over the pre-ordered Banach space $\pobs$. Then $j$ is a bipositive injection such that $j(\bs)$ is closed if and only if $\bspos$ is a closed normal cone.
	
	When this is the case, there exists a $K>0$ such that, for each $x^*$ in $\bsdual$, there exist $x_1^*,x_2^*\in \bsdualpos$ with $x^*=x_1^*-x_2^*$  and $\norm{x_1^*}+\norm{x_2^*}\leq K\norm{x^*}$. For any such $K$   we have $1/K\norm{x}\leq\norm{j(x)}\leq\norm{x}$ for $x\in\bs$.
\end{theorem}

\begin{proof}
For statement in the first paragraph, it is, in view of the remarks preceding the theorem,  sufficient to suppose that $\bspos$ is a closed cone and then, knowing that $j$ is a bipositive injection, show that $j(\bs)$ being closed is equivalent to the normality of $\bspos$. Suppose that $j(\bs)$ is closed. Then $x\mapsto\norm{j(x)}$ is an equivalent norm on $\bs$, in which case the positivity of $j$ and the normality of the cone of the Banach lattice $\FBLCpobs$ are easily seen to imply the normality of $\bspos$. Conversely, suppose that $\bspos$ is normal. Then $\bsdualpos$ is generating in $\bsdual$ by \cite[Theorem~2.40]{aliprantis_tourky_CONES_AND_DUALITY:2007}. It follows from this that there exists a $K>0$ such that, for each $x^*$ in $\bsdual$, there exist $x_1^*,x_2^*\in \bsdualpos$ with $x^*=x_1^*-x_2^*$ and $\norm{x_1^*}+\norm{x_2^*}\leq K\norm{x^*}$;  see \cite[Theorem~2.37]{aliprantis_tourky_CONES_AND_DUALITY:2007} or, more generally, \cite[Theorem~4.1]{de_jeu_messerschmidt:2014}. Take $x\in\bs$ and then $x^*\in\ballbsdual$ such that $\abs{x^*(x)}=\norm{x}$. Choose $x_1^*,x_2^*\in \bsdualpos$ such that $x^*=x^*_1-x^*_2$ and $\norm{x_1^*}+ \norm{x_2^*}\leq K$. With $\fac{x_1^*}$, $\fac{x_2^*}$ as in \cref{res:equality_of_norms}, we then see that
\begin{equation*}
	\begin{split}
	\norm{x}&=\abs{x^*_1(x)-x^*_2(x)}\\
	&=\abs{\fac{x^*_1}(j(x))-\fac{x^*_2}(j(x))}\\
	&\leq \left(\norm{\fac{x^*_1}}+\norm{\fac{x^*_2}}\right)\norm{j(x)}\\
	&=\left(\norm{x^*_1}+\norm{x^*_2}\right)\norm{j(x)}\\
	&\leq K \norm{j(x)}.
	\end{split}
\end{equation*}
Hence $j$ is bounded below, so that $j(\bs)$ is closed.

The statements in the second paragraph have now also been established.
\end{proof}

When $\bs$ is a Banach lattice, $j$ may be a vector lattice homomorphism or not. This happens only in two trivial cases.

\begin{proposition}\label{res:image_sublattice}
	Let $\ct$ be a convexity type, and let $(j,\FBLCpobs)$ be a free $\ct$-convex Banach lattice over the pre-ordered Banach space $\pobs$, where $\bs$ is a Banach lattice.  Then the following are equivalent:
	\begin{enumerate}
		\item\label{part:image_sublattice_1} $j$ is a vector lattice homomorphism;
		\item\label{part:image_sublattice_2} $j(\bs)$ is a Banach sublattice of $\FBLCpobs$;
		\item\label{part:image_sublattice_3} $\bs$ is zero- or one-dimensional;
		\item\label{part:image_sublattice_4} $(\id,\bs)$ is a free $\ct$-convex Banach lattice over $\pobs$, where $\id$ is the identity operator on $\bs$.
	\end{enumerate}
\end{proposition}

\begin{proof}
	The equivalence of the parts~\ref{part:image_sublattice_1} and~\ref{part:image_sublattice_2} follows from \cref{res:embedding_as_an_ordered_banach_space}. 
	
	Suppose that $j$ is a vector lattice homomorphism. Then every positive contraction $\mor\colon \bs\to\bltwo$ into a Banach lattice is a vector lattice homomorphism; the same is then true for every positive operator. We claim that this implies that $\bs$ is zero- or one-dimensional. To see this, we may suppose that $\bs\neq\zerospace$. Take a non-zero $x\in\bs$ and then an $x^*\in\bsdualpos$ such that $x^*(x)\neq 0$. As $x^*(x^+)\wedge x^*(x^-)=0$, either $x^*(x^+)>0$ and $x^*(x^-)=0$, or $x^*(x^+)=0$ and $x^*(x^-)>0$. In the first case, take $y^*\in \bsdualpos$. Then  $(x^*+y^*)(x^+)\wedge (x^*+y^*)(x^-)=0$. As $(x^*+y^*)(x^+)\geq x^*(x^+)>0$, we have $(x^*+y^*)(x^-)=0$. Hence $y^*(x^-)=0$. Since $y^*$ was arbitrary, we conclude that $x^-$=0. In the second case, it follows similarly that $x^+=0$. We thus see that $\bs$ is linearly ordered, so that it is one-dimensional. Hence part~\ref{part:image_sublattice_1} implies part~\ref{part:image_sublattice_3}. It is easy to see that part~\ref{part:image_sublattice_3} implies part~\ref{part:image_sublattice_4}, and clearly the latter implies part~\ref{part:image_sublattice_1}.
\end{proof}

If $\bspos=\zerospace$, then $\bsdualpos=\bsdual$, so that one can take $K=1$ in \cref{res:embedding_as_an_ordered_banach_space}. This gives the following.

\begin{corollary}\label{res:isometry_when_zero_cone} Let $\ct$ be a convexity type, and let $(j,\FBL^\ct[(\bs,\zerospace)])$ be a free $\ct$-convex Banach lattice over the pre-ordered Banach space $(\bs,\zerospace)$. Then $j$ is an isometry.
\end{corollary}

It appears to be a non-trivial open question to give a general convenient criterion for $j$ to be isometric when $\bspos\neq\zerospace$.
There is, however, one natural situation where this is easily seen to be the case. Indeed, if $\bs$ is a Banach lattice that is itself $\ct$-convex, then the identity operator on $\bs$ occurs in \eqref{eq:norm_formula_special}, implying that $j$ is isometric. In general, we have the following pointwise results.

\begin{lemma}\label{res:norm_of_image}
Let $\ct$ be a convexity type, and let $(j,\FBLCpobs)$ be a free $\ct$-convex Banach lattice over the pre-ordered Banach space $\pobs$. Then:
\begin{enumerate}
	\item\label{part:norm_of_image_1} $\norm{j(x)}\geq \sup_{\mor\in\ballbsdualpos}\abs{\mor(x)}$ for $x\in \bs$;
	\item\label{part:norm_of_image_2} $\norm{j(x)}=\max_{\mor\in\ballbsdualpos}\mor(x)$ for $x\in \bspos$.
\end{enumerate}
\end{lemma}

\begin{proof}
	Take $x\in\bs$. Then $\abs{\mor(x)}=\abs{(\facmor\circ j)(x)}\leq\norm{j(x)}$ for $\mor\in\ballbsdualpos$, establishing the first part. For the second part, take $x\in\bspos$ and then a positive contractive $\psi$ in the dual of $\FBLCpobs$ such that $(\psi\circ j)(x)=\psi(j(x))=\norm{j(x)}$, which is possible as $j(x)$ is positive. The second part follows from this and the first part.
\end{proof}

Supplementing \cref{res:image_sublattice}, we collect a number of results when $\bs$ is a Banach lattice in the following.

\begin{theorem}\label{res:summary_for_banach_lattices}
	Let $\ct$ be a convexity type, and let $(j,\FBLCpobs)$ be a free $\ct$-convex Banach lattice over the pre-ordered Banach space $\pobs$, where $\bs$ is a Banach lattice. Then:
	\begin{enumerate}
		\item\label{part:summary_for_banach_lattices_1} $j$ is a bipositive embedding such that
		\begin{enumerate}
			\item\label{part:summary_for_banach_lattices_1_1} $1/2 \norm{x}\leq\norm{j(x)}\leq\norm{x}$ for $x\in \bs$;
			\item\label{part:summary_for_banach_lattices_1_2} $\norm{j(x)}=\norm{x}$ for $x\in\bspos\cup -\bspos$;
		\end{enumerate}
		\item\label{part:summary_for_banach_lattices_2} $j$ is isometric when $\bs$ is $\ct$-convex, in which case   $(j\circ\fac{\id})\colon\FBLCpobs\to\FBLCpobs$, where $\id\colon\bs\to\bs$ is the identity map, is a positive projection onto $j(\bs)$ that has norm one when $\bs\neq\zerospace$;		
	\end{enumerate}
\end{theorem}

\begin{proof}
	Part~\ref{part:summary_for_banach_lattices_1_1} follows from \cref{res:embedding_as_an_ordered_banach_space}, where one can take $K=2$.
	
	Part~\ref{part:summary_for_banach_lattices_1_2} follows from part~\ref{part:norm_of_image_2} of \cref{res:norm_of_image}.
	
	In part~\ref{part:summary_for_banach_lattices_2}, the fact that $j$ is isometric when $\bs$ is $\ct$-convex was observed prior to \cref{res:norm_of_image}. The statement that $j\circ\fac{\id}$ is a projection onto $j(\bs)$ is the analogue of \cite[Corollary~2.6]{aviles_rodriguez_tradacete:2018}; it is clearly positive.  Since it is contractive, it has norm one when it is non-zero.	
\end{proof}

Thus, perhaps somewhat remarkably, when the Banach lattice $\bs$ is $\ct$-convex but not zero- or one-dimensional,  $\bs$ sits complementedly and isometrically inside $\FBLCpobs$ as an ordered Banach space, but not as a vector sublattice.

If $\pobs$ is a general pre-ordered Banach space, then, by  \cref{res:isometry_when_zero_cone}, it is sufficient that $\bspos=\zerospace$ for $j$ to be isometric. This is, however, not necessary, as is shown by \cref{res:summary_for_banach_lattices}. By \cref{res:norm_of_image}, for $j$ to be isometric it is necessary that $\ballbsdualpos$ be norming for $\bspos$, but this is not sufficient. Indeed, as the following example shows, even for Banach lattices, where it \emph{is} norming, the bipositive embedding $j$ need \emph{not} be isometric. It also shows that the factor $1/2$ in \cref{res:summary_for_banach_lattices} is sharp.

\begin{example}\label{ex:not_an_isometry}
	Take a convexity type $\ct$ such that the $\ct$-convex Banach lattices are the $\infty$-convex Banach lattices with $\infty$-constant 1. We write $(j,\FBLinfpobs)$ for the free $\ct$-convex Banach lattice over the pre-ordered Banach space $\pobs$. We shall show in \cref{subsec:case_infty} that
	\[
	\norm{\freeelt}=\sup_{\mor\in\ballbsdualpos}\abs{\facmor(\freeelt)}
	\]
	for $\freeelt\in\FBLinfpobs$. Accepting this for the moment, we thus have that
	\[
	\norm{j(x)}=\sup_{\mor\in\ballbsdualpos}\abs{\mor(x)}
	\]
	for $x\in\bs$. For $\bs$, we take the Banach lattice $\ell_1^2$. The positive contractive functionals on $\bs$ are the $\mor_{\alpha_1,\alpha_2}$ for $0\leq\alpha_1,\alpha_2\leq 1$, defined by setting $\mor_{\alpha_1,\alpha_2}(x)\coloneqq \alpha_1 x_1+\alpha_2 x_2$ for $(x_1,x_2)\in\bs$.
	It is then easy to see that, for  $x=(x_1,x_2)\in\bs$,
	\begin{equation*}
		\norm{j(x)}=
		\begin{cases}
			\norm{x}&\text{ when }x\in\bspos\cup-\bspos;\\
			\max(\abs{x_1},\abs{x_2})&\text{otherwise}.
		\end{cases}
	\end{equation*}
On taking $x=(1,-1)$, we see that $j$ is not isometric and that the factor $1/2$ in \cref{res:summary_for_banach_lattices} is sharp.
\end{example}

\begin{remark}
In view of \cref{res:summary_for_banach_lattices}, \cref{ex:not_an_isometry} can only work because $\ell_1^2$ is not $\infty$-convex with $\infty$-convexity constant 1. It is open whether, for a Banach lattice $\bs$, the fact that $j\colon\bs\to\FBLCpobs$ is an isometry implies that $\bs$ is $\ct$-convex.
\end{remark}

To conclude, we consider the question whether $\FBLCpobs$ can be realised as a function lattice. As point evaluations are then automatically continuous, a moment's thought shows that this is equivalent to requiring that the $\facmor$ for $\mor\in\ballbsdualpos$ separate its points; this was already observed in \cite[Remark~6.2]{jardon-sanchez_laustsen_taylor_tradacete_troitsky:2022}. This motivates the following.

For $\freeelt\in\FBLCpobs$, define $\widehat\freeelt\colon\ballbsdualpos\to\RR$ by setting
\begin{equation}\label{eq:transport}
	\widehat\freeelt(\mor)\coloneqq \facmor(\freeelt)
\end{equation}
for $\mor\in\ballbsdualpos$. The map $\freeelt\mapsto\widehat\freeelt$ is then a vector lattice homomorphism from $\FBLCpobs$ into the vector lattice of all positively homogeneous functions on $\ballbsdualpos$, and such that $\widehat{j(x)}=\delta_x$ for $x\in\bs$. It follows from \eqref{eq:norm_formula} that the convergence of a sequence in $\FBLCpobs$ implies the uniform convergence of its image. As the dense vector sublattice of $\FBLCpobs$ that is generated by $j(\bs)$ clearly maps into $\contballposh$, this is true for $\FBLCpobs$ itself.

\begin{proposition}\label{res:injectivity_of_canonical_map}
	Let $\ct$ be a convexity type, and let $(j,\FBLCpobs)$ be a free $\ct$-convex Banach lattice over the pre-ordered Banach space $\pobs$. Then the following are equivalent:
	\begin{enumerate}
		\item\label{part:injectivity_of_canonical_map_1}
		the \hskip -.5pt vector lattice homomorphism \hskip -.5 pt$\freeelt\!\mapsto\!\widehat\freeelt$ in \hskip-1pt \eqref{eq:transport} from $\FBLCpobs$ into \hskip -.5pt $\contballposh$ is injective;
		\item\label{part:injectivity_of_canonical_map_2} the contractive vector lattice homomorphisms $\facmor\colon\FBLCpobs\to\RR$ for $\mor\in\ballbsdualpos$ separate the points of $\FBLCpobs$;
		\item\label{part:injectivity_of_canonical_map_3} the contractive vector lattice homomorphisms $\facmor\colon\FBLCpobs\to\bltwo_\mor$ separate the points of $\FBLCpobs$, as $\mor$ runs over all positive contractions $\mor\colon\bs\to\bltwo_\mor$ into finite dimensional  $\ct$-convex Banach lattices $\bltwo_\mor$.
	\end{enumerate}
\end{proposition}

\begin{proof}
	 As observed earlier, it is clear that the parts~\ref{part:injectivity_of_canonical_map_1} and~\ref{part:injectivity_of_canonical_map_2} are equivalent. Obviously, part~\ref{part:injectivity_of_canonical_map_2} implies part~\ref{part:injectivity_of_canonical_map_3}. We show that part~\ref{part:injectivity_of_canonical_map_3} implies part~\ref{part:injectivity_of_canonical_map_1}. Suppose that $\freeelt\in\FBLCpobs$ and that $\widehat\freeelt=0$. Take a $\mor$ as in part~\ref{part:injectivity_of_canonical_map_3}. Since $\bltwo_\mor$ can be identified as a vector lattice with $\RR^n$ for some $n$, there are, after such an identification, $\mor_1,\ldots, \mor_n\in\ballbsdualpos$ and $\lambda_1,\dotsc, \lambda_n\geq 0$ such that $\lambda_1 \mor_1,\cdots,\lambda_n \mor_n$ are the coordinate components of $\mor$. By the uniqueness of the factoring morphism, the coordinate components of $\facmor$ are  $\lambda_1 \fac{\mor_1},\cdots,\lambda_n \fac{\mor_n}$. Hence $\facmor(\freeelt)=0$. As $\mor$ was arbitrary, we have $\freeelt=0$.
\end{proof}

\begin{remark}\label{rem:comparison_with_JFA-definition}
With the results in the current section available, a meaningful comparison of the definitions of free Banach lattices in \cite[p.2956]{aviles_rodriguez_tradacete:2018} and in our  \cref{def:free_C-convex_banach_lattice_over_a_pre-ordered_banach_space} is now possible.\footnote{There appears to be no definition as such of what are called  free Banach lattices under convexity conditions in \cite{jardon-sanchez_laustsen_taylor_tradacete_troitsky:2022}.} Let $\bs$ be a Banach space.

In \cite{aviles_rodriguez_tradacete:2018}, a free Banach lattice over $\bs$ is a pair $(\phi_\bs,\FBL[\bs])$, where $\FBL[\bs]$ is a Banach lattice and $\phi_\bs\colon\bs\to\FBL[\bs]$ is an isometry with the property that, for every operator $T\colon\bs\to\bltwo$ into a Banach lattice $\bltwo$, there exists a unique vector lattice homomorphism $\fac{T}\colon\FBL[\bs]\to\bltwo$ with $\norm{\fac{T}}=\norm{T}$ making the diagram
\begin{equation*}\label{dia:free_banach_lattice}
	\begin{tikzcd}
		\bs\arrow[r, "\phi_E"]\arrow[dr, "T", swap]&  \FBL[\bs] \arrow[d, "\fac{T}"]
		\\ & \bltwo
	\end{tikzcd}
\end{equation*}
commutative. Via scaling it is obviously equivalent to require this for all contractions $T$.

According to \cref{def:free_C-convex_banach_lattice_over_a_pre-ordered_banach_space},  a free $\emptyset$-convex Banach lattice over the pre-ordered Banach $(\bs,\zerospace)$ is a pair $(j,\FBL^\emptyset[(\bs,\zerospace)])$, where
$\FBL^\emptyset[(\bs,\zerospace)]$ is a Banach lattice and $j\colon\bs\to\FBL^\emptyset[(\bs,\zerospace)]$ is a contraction with the property that, for every contraction $\mor\colon\bs\to\bltwo$ into a Banach lattice $\bltwo$,  there exists a unique contractive vector lattice homomorphism $\facmor\colon\FBL^\emptyset[(\bs,\zerospace)]\to\bltwo$ making the diagram
\begin{equation}\label{dia:free_empty_set-convex_banach_lattice}
	\begin{tikzcd}
		\bs\arrow[r, "j"]\arrow[dr, "\mor", swap]& \FBL^\emptyset[(\bs,\zerospace)] \arrow[d, "\facmor"]
		\\ & \vltwo
	\end{tikzcd}
\end{equation}
commutative. We know from \cref{res:isometry_when_zero_cone} and \cref{res:equality_of_norms}, however, that it is automatic that $j$ in diagram~\eqref{dia:free_empty_set-convex_banach_lattice} is an isometry, and that $\norm{\mor}=\norm{\facmor}$. Hence $\FBL^\emptyset[(\bs,\zerospace)]$ and $\FBL[\bs]$ coincide.

We thus see that there is some redundancy in \cite{aviles_rodriguez_tradacete:2018},
where the definition of a free Banach lattice is intuitive rather than motivated from category theory. The redundancy is harmless, but it would be wrong to adapt the obvious analogous intuitive definition also in the general case of a free $\ct$-convex Banach lattice over a pre-ordered Banach space $\pobs$. This would entail that, in diagram~\eqref{dia:free_C-convex_banach_lattice}, it is required that $j$ be a positive isometry, and that $\norm{\mor}=\norm{\facmor}$. Clearly, if it exists, this alternate free object coincides with our $\FBLCpobs$, which we know to exist and of which we know a few basic properties. It is always automatic that $\norm{\facmor}=\norm{\mor}$ by \cref{res:equality_of_norms}, and $j$ is still automatically isometric when $\bspos=\zerospace$ by \cref{res:isometry_when_zero_cone}. The difficulties arise (or, rather, new phenomena occur) when $\bspos\neq \zerospace$. Then the free object can fail to exist when the alternate definition is used. Obviously, it then fails to exist when $\FBLCpobs=\zerospace$ as in \cref{res:degeneracy} but $\bs\neq\zerospace$. Slightly more interestingly, \cref{res:FVL_embeds} shows that it fails to exist when $\overline{\bspos}$ is not a cone because then $j$ is also not even injective. Much more interestingly,  \cref{res:summary_for_banach_lattices} and \cref{ex:not_an_isometry} show that the free object can then even fail to exist in cases where $\bs$ is a Banach lattice and the map $j$ in \cref{def:free_C-convex_banach_lattice_over_a_pre-ordered_banach_space} is a bipositive contraction with closed range, which happens to be non-isometric.

It appears that is preferable to follow the general category theoretical definition of a free object that led to   \cref{def:free_C-convex_banach_lattice_over_a_pre-ordered_banach_space}.
\end{remark}

\section{Realisation of the free $p$-convex Banach lattice with $p$-convexity constant 1 over a pre-ordered Banach space as a function lattice}\label{sec:realisation}

\noindent Let $1\leq p\leq\infty$ and let $\pobs$ be a pre-ordered Banach space. Take a convexity type $\ct_p$ such that the $\ct_p$-convex Banach lattices are the $p$-convex Banach lattices with $p$-convexity constant 1. We know from \cref{sec:free_banach_lattices_over_pre-ordered_banach_spaces}
that $(j,\FBL^{{\ct}_p}[\pobs])$, the free $p$-convex Banach lattice with $p$-convexity constant 1 over $\pobs$ exists.  We denote it by $(j,\FBLppobs)$; the free Banach lattice $\FBLp[\bs]$ as in \cite{jardon-sanchez_laustsen_taylor_tradacete_troitsky:2022} is then our $\FBLp[\bs,\zerospace ]$. In this section, we shall give a realisation of $\FBLppobs$ as a Banach lattice of functions. For $\FBLp[\bs,\zerospace ]$, such a realisation is known: for $p=1$ this can be found in \cite[Theorem~2.5]{aviles_rodriguez_tradacete:2018}, for $1<p<\infty$ in \cite[Theorem~6.1]{jardon-sanchez_laustsen_taylor_tradacete_troitsky:2022}, and for $p=\infty$ in \cite[Theorem~5.3]{jardon-sanchez_laustsen_taylor_tradacete_troitsky:2022}. When looking at the pertinent realisations as function lattices on $\ballbsdual$, there is a natural approach to find realisations as function lattices on $\ballbsdualpos$ in the general case. As we shall see, this approach is successful.

Although our proof of this fact benefits from some of the ingenious arguments in the proof of \cite[Theorem~~6.1]{jardon-sanchez_laustsen_taylor_tradacete_troitsky:2022}, our method is different from that in \cite{jardon-sanchez_laustsen_taylor_tradacete_troitsky:2022} or \cite{aviles_rodriguez_tradacete:2018}. In the proof of \cite[Theorem~~6.1]{jardon-sanchez_laustsen_taylor_tradacete_troitsky:2022}, the existence of $\FBLp[\bs,\zerospace ]$, which is at that stage already known from \cite[Theorem~~3.3]{jardon-sanchez_laustsen_taylor_tradacete_troitsky:2022}, is established once more by showing that a concrete function lattice has the desired universal property. The existence of  $\FBL^{(1)}[(\bs,\zerospace)])$ in the proof of \cite[Theorem~2.5]{aviles_rodriguez_tradacete:2018} is similarly intertwined with its realisation as a function lattice. In the current section, however, we start with $\FBLppobs$, which we already have shown to exist, essentially by building on an existence result from universal algebra. We then analyse its norm to the extent where it becomes clear that it can be realised as a function lattice on $\ballbsdualpos$ as in \cref{res:injectivity_of_canonical_map}. We shall make additional comments on the difference in \cref{rem:completeness_not_needed_infinity,rem:completeness_not_needed_p}.

\begin{remark}
	Before proceeding, the following should be noted, covering a more general situation than positive contractions and $p$-convexity constant 1, as in \cite[Theorem~ 6.1]{jardon-sanchez_laustsen_taylor_tradacete_troitsky:2022}): when $1\leq p<\infty$, $\pobs$ is a pre-ordered Banach space, and $T\colon\bs\to\bltwo$ is a positive operator into a $p$-convex Banach lattice $\bltwo$ with $p$-convexity constant $M^{(p)}(\bltwo)$, there exists a unique vector lattice homomorphism $\fac{T}\colon\FBLppobs\to\bltwo$ such that $T\circ j=\mor$. Moreover,  $\norm{\fac{T}}\leq M^{(p)}(\bltwo)\norm{T}$. This follows from \cref{res:equality_of_norms} and the values of the constants in \cref{rem:renorming} when renorming $\bltwo$ to have $p$-convexity constant 1, after which the universal property of $\FBLppobs$ can be used.
\end{remark}

In our analysis of the norm of $\FBLppobs$, we shall distinguish between finite and infinite $p$. The following preparatory result will be needed in both cases.

\begin{lemma}\label{res:composition}
	Let $\ct$ be a convexity type, and let $(j,\FBLCpobs)$ be a free $\ct$-convex Banach lattice over the pre-ordered Banach space $\pobs$.
 Let $\bltwo$ and $\bltwo^\prime$ be $\ct$-convex Banach lattices, let $\mor\colon\bs\to\bltwo$ be a positive contraction, and let $\psi\colon\bltwo\to\bltwo^\prime$ be a contractive vector lattice homomorphism. Then $\fac{\psi\circ\mor}=\psi\circ\facmor$.
\end{lemma}

\begin{proof}
	We have $(\psi\circ\facmor)\circ j=\psi\circ\mor$. As $\psi\circ\facmor$ is a contractive vector lattice homomorphism, it must be $\fac{\psi\circ\mor}$.
\end{proof}

\subsection{The case $p=\infty$}\label{subsec:case_infty}

This is the easy case. Take $\freeelt\in\FBLinfpobs$. We know from  \cref{res:norm_formula} that
\begin{equation*}
	\norm{\freeelt}=\max_\mor\norm{\facmor(\freeelt)},
\end{equation*}
where $\mor$ runs over all positive contractions $\mor\colon\bs\to\bltwo_\mor$ into $\infty$-convex Banach lattice with $\infty$-convexity constant 1. Take such a $\mor$. By \cref{res:characterisation_infinite_p}, there exists a contractive vector lattice homomorphism $\psi\colon\bltwo_\mor\to\RR$ such that $\norm{\facmor(\freeelt)}=\abs{\psi(\facmor(\freeelt))}$. \cref{res:composition} implies that $\norm{\facmor(\freeelt)}=\abs{\fac{\psi\circ\mor}(\freeelt)}$. It is now clear that
\begin{equation}\label{eq:iso_AM-spaces}
	\norm{\freeelt}=\max_{x^*\in\ballbsdualpos}\abs{\fac{x^*}(\freeelt)}.
\end{equation}

Using \eqref{eq:iso_AM-spaces}, we can now give a realisation of $\FBLinfpobs$ as a vector sublattice of $\contballposh$ as in \cref{res:injectivity_of_canonical_map}. For $\freeelt\in\FBLinfpobs$, define $\widehat{\freeelt}\colon\ballbsdualpos\to\RR$ by setting
\[
\widehat{\freeelt}(x^*)=\fac{x^*}(\freeelt).
\]

Then $\widehat{j(x)}=\delta_x$ for $x\in\bs$. Recalling that $\FBLinfpobs$ is generated by $j(\bs)$, we have the following result for free AM-spaces.

\begin{theorem}\label{res:AM-realisation}
	Let $\pobs$ be a pre-ordered Banach space. Supply $\contballposh$ with the maximum norm, and let $\bl$ be the closed vector sublattice of the Banach lattice $\contballposh$ that is generated by $\{\delta_x:x\in\bs\}$. Define $j\colon\bs\to\bl$ by setting $j(x)\coloneqq\delta_x$. Then $(j,\bl)$ is a free $\infty$-convex Banach lattice with $\infty$-convexity constant 1 over $\pobs$.
\end{theorem}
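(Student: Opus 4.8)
The plan is to transport the universal property from the abstract free object—which \cref{res:existence_of_FBLC} guarantees exists—onto the concrete function lattice $\bl$ by means of the canonical map $\freeelt\mapsto\widehat\freeelt$ of \eqref{eq:transport}, which the discussion preceding the theorem has already shown to land in $\contballposh$ and to be isometric. Write $\iota\colon\bs\to\FBLinfpobs$ for the structure map of the free $\infty$-convex Banach lattice with $\infty$-convexity constant $1$ supplied by \cref{res:existence_of_FBLC}, and let $\Phi\colon\FBLinfpobs\to\contballposh$ denote the vector lattice homomorphism $\freeelt\mapsto\widehat\freeelt$. I would first record that, by \eqref{eq:iso_AM-spaces} together with the norm formula of \cref{res:norm_formula}, $\Phi$ is isometric once $\contballposh$ carries the maximum norm; in particular $\Phi$ is injective and, its domain being complete, its range is a complete—hence closed—vector sublattice of $\contballposh$.

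Next I would identify $\Phi$ on generators. For $x\in\bs$ and $x^*\in\ballbsdualpos$, the factoring identity $\fac{x^*}\circ\iota=x^*$ together with \eqref{eq:delta_definition} gives
\[
\widehat{\iota(x)}(x^*)=\fac{x^*}(\iota(x))=x^*(x)=\delta_x(x^*),
\]
so that $\Phi(\iota(x))=\delta_x=j(x)$. Since $\iota(\bs)$ generates $\FBLinfpobs$ as a Banach lattice and $\Phi$ is an isometric vector lattice homomorphism, its closed range equals the closed vector sublattice of $\contballposh$ generated by $\{\delta_x:x\in\bs\}$, which is precisely $\bl$. Consequently $\Phi\colon\FBLinfpobs\to\bl$ is a surjective isometric vector lattice isomorphism with $\Phi\circ\iota=j$, and $\Phi^{-1}$ is again a contractive vector lattice homomorphism.

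To conclude, I would invoke the transport of the universal property along an isomorphism (equivalently, the uniqueness-up-to-isomorphism of free objects noted after \cref{def:free_object}): given a positive contraction $\mor\colon\bs\to\bltwo$ into an $\infty$-convex Banach lattice with $\infty$-convexity constant $1$, the unique factoring $\facmor\colon\FBLinfpobs\to\bltwo$ through $(\iota,\FBLinfpobs)$ yields the contractive vector lattice homomorphism $\facmor\circ\Phi^{-1}\colon\bl\to\bltwo$, which satisfies $(\facmor\circ\Phi^{-1})\circ j=\mor$ and is the unique such map. This makes $(j,\bl)$ a free $\infty$-convex Banach lattice with $\infty$-convexity constant $1$ over $\pobs$. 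The result is in essence a bookkeeping corollary of the preceding material; the only points needing care are that isometry forces the range to be complete (so that it is the closed generated sublattice $\bl$, rather than merely a dense part of it) and that $\bl$ is genuinely $\infty$-convex with constant $1$—which is automatic from the isometric isomorphism with $\FBLinfpobs$, or directly from the fact that any closed sublattice of $\contballposh$ under the maximum norm is an AM-space. I anticipate no genuine obstacle beyond these verifications, the substantive content being already contained in \eqref{eq:iso_AM-spaces} and \cref{res:injectivity_of_canonical_map}.
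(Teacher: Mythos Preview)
Your proposal is correct and follows essentially the same route as the paper: the paper derives \eqref{eq:iso_AM-spaces}, notes via \cref{res:injectivity_of_canonical_map} that $\freeelt\mapsto\widehat\freeelt$ is injective (hence isometric into $\contballposh$), and then simply states the theorem. Your write-up just makes explicit the bookkeeping the paper leaves implicit---identifying $\Phi(\iota(x))=\delta_x$, observing that isometry forces the range to be closed, and spelling out the transport of the universal property---but the substantive idea is identical. The reference to \cref{res:norm_formula} is harmless but redundant, since \eqref{eq:iso_AM-spaces} already \emph{is} the reduced norm formula.
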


\begin{remark}\label{rem:completeness_not_needed_infinity}\quad
	\begin{enumerate}
		\item We recall that  \cref{res:free_vector_lattice_over_pre-ordered_banach_space_as_lattice_of_functions} and \cref{res:FVL_embeds} show that the vector sublattice of $\bl$ that is generated by $\{\delta_x:x\in\bs\}$ is a free vector lattice over the pre-ordered vector space $\pobsclosure$.
		\item To further illustrate the difference between the present approach and that in \cite{aviles_rodriguez_tradacete:2018} and \cite{jardon-sanchez_laustsen_taylor_tradacete_troitsky:2022}, we note that  it is not necessary to observe in \cref{res:AM-realisation} that $\contballposh$ is complete in the maximum norm, in order to know that $\bl$ is complete. Being an isometric image of $\FBLinfpobs$, it {\emph is} complete. In the approach as in \cite{aviles_rodriguez_tradacete:2018} and \cite{jardon-sanchez_laustsen_taylor_tradacete_troitsky:2022} for $\bspos=\zerospace$, however, it is essential that the construction takes place in what is known from the outset to be a complete function lattice. Without this, there is no guarantee that the completion that is taken in the construction yields a function lattice again.
		\end{enumerate}
\end{remark}

\subsection{The case $1\leq p<\infty$} We now turn to the remaining case $1\leq p<\infty$, which is more demanding.\footnote{We also include the case $p=1$. This is excluded in \cite[Section~6]{jardon-sanchez_laustsen_taylor_tradacete_troitsky:2022}, perhaps because \cite{aviles_rodriguez_tradacete:2018} then already provides the answer with an easier argument, but for the proof as such this is not necessary.} Our aim is to show that, for $\freeelt\in\FBLppobs$,
\begin{equation}\label{eq:reduction_to_finite_dimensional_l_p}
\norm{\freeelt}=\sup_\mor\norm{\facmor(\freeelt)},
\end{equation}
where $\mor$ runs over the positive contractions from $\bs$ into $\ell_p^n$ for $n=1,2,\ldots$, and then proceed as in \cref{res:injectivity_of_canonical_map}.

The first step towards \eqref{eq:reduction_to_finite_dimensional_l_p} is to note that, for $\freeelt\in\FBLppobs$,
\begin{equation*}\label{eq:reduction_to_L_p}
\norm{\freeelt}=\sup_\mor\norm{\facmor(\freeelt)}
\end{equation*}
where $\mor$ runs over the positive contractions from $\bs$ into $\Ell_p(\mu_\mor)$-spaces for probability measures $\mu_\mor$. This is an immediate consequence of \cref{res:characterisation_all_p} and \cref{res:composition}.

We now start the next step, which is the passage from spaces $\Ell_p(\mu)$ as codomains to spaces $\ell_p^n$.

For $1\leq p\leq \infty$, we let $q$ denote its conjugate exponent. For $g\in L_q(\mu)$, where $\mu$ is not necessarily finite, we write $g^*$ for the corresponding element of $L_p(\mu)^*$. The following preparatory observation from \cite{jardon-sanchez_laustsen_taylor_tradacete_troitsky:2022} is at the same time elementary and crucial.

\begin{lemma}\label{res:trivial_and_crucial}
Let $1\leq p\leq \infty$, and let $f_1,\dotsc,f_k\in\Ell_p(\mu)$ for a \uppars{not necessarily finite} measure $\mu$ on a set $\pset$. Suppose that $S$ is a measurable subset of $\pset$ with the property that there exists $1\leq i^\prime\leq k$ such that $\bigvee_{i=i}^k f_i(\pt)=f_{i^\prime}(\pt)$ for all $\pt\in S$. If $g\in\Ell_q(\mu)$ is positive on $S$  and zero outside $S$, then
\[
g^*\left(\bigvee_{i=i}^k f_i\right)=\bigvee_{i=1}^k g^* (f_i).
\]
Similarly for the infimum.
\end{lemma}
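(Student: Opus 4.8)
The plan is to unwind the definition of the functional $g^\ast$, namely $g^\ast(f) = \int_\pset f\,g\,\mathrm{d}\mu$ for $f \in \Ell_p(\mu)$, and then exploit two features of the hypotheses separately: that $g$ is supported on $S$, and that on $S$ the lattice supremum $\bigvee_{i=1}^k f_i$ (represented pointwise a.e.\ by $\pt \mapsto \max_i f_i(\pt)$) is realised by the \emph{single} function $f_{i'}$. Writing $h \coloneqq \bigvee_{i=1}^k f_i$, the first step is to note that, since $g$ vanishes off $S$, every integral against $g$ reduces to an integral over $S$ only, and on $S$ one may substitute $h = f_{i'}$:
\[
g^\ast(h) = \int_S h\,g\,\mathrm{d}\mu = \int_S f_{i'}\,g\,\mathrm{d}\mu = g^\ast(f_{i'}).
\]
This is the one place where it matters that a \emph{single} index $i'$ works throughout $S$, for it lets me replace $h$ by a bona fide member of the family.

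The second step supplies the reverse estimate from positivity of $g$. Because $g \geq 0$ on $\pset$ (it is positive on $S$ and zero elsewhere, hence a positive functional) and $f_i \leq h$ pointwise for every $i$, monotonicity of the integral gives $g^\ast(f_i) \leq g^\ast(h)$ for each $i$, whence $\bigvee_{i=1}^k g^\ast(f_i) \leq g^\ast(h)$. Combining this with $g^\ast(h) = g^\ast(f_{i'}) \leq \bigvee_{i=1}^k g^\ast(f_i)$, valid since $f_{i'}$ is one of the $f_i$, forces equality throughout and yields the asserted identity $g^\ast(h) = \bigvee_{i=1}^k g^\ast(f_i)$.

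For the infimum one runs the identical argument, the single index now realising the pointwise minimum on $S$ and all inequalities reversed: $g^\ast(\bigwedge_{i} f_i) = g^\ast(f_{i'})$ by the support of $g$, while $g \geq 0$ together with $\bigwedge_{i} f_i \leq f_i$ gives $g^\ast(\bigwedge_{i} f_i) \leq g^\ast(f_i)$ for each $i$, so that $g^\ast(\bigwedge_{i} f_i) \leq \bigwedge_{i} g^\ast(f_i) \leq g^\ast(f_{i'}) = g^\ast(\bigwedge_{i} f_i)$. As the authors already signal by calling the result at once trivial and crucial, there is no genuine obstacle in the proof itself; the only point requiring any care is recording that $g^\ast$ is a \emph{positive} functional, so that $f_i \leq h$ passes to $g^\ast(f_i) \leq g^\ast(h)$. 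This is precisely why $g$ is assumed positive on $S$ rather than merely supported there.
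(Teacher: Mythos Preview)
Your proof is correct. The paper does not supply a proof of this lemma at all, stating only that it is ``at the same time trivial and crucial'' and citing it as an observation from \cite{jardon-sanchez_laustsen_taylor_tradacete_troitsky:2022}; your argument fills in exactly the details one would expect, and there is nothing to add.
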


We shall also use the following, which was implicitly already observed in \cite{jardon-sanchez_laustsen_taylor_tradacete_troitsky:2022}.

\begin{lemma}\label{res:disjoint_functions_yield_contraction}
	Let $1\leq p\leq\infty$.
	Suppose that $\Gamma=\{g_1,\dotsc,g_k\}\subset \Ell_q(\mu)$ for a \uppars{not necessarily finite} measure $\mu$ is such that $g_i g_j=0$ for $1\leq i<j\leq k$ and $\norm{g_i}_q\leq 1$ for $i=1,\dotsc,k$.
	For $f\in\Ell_p(\mu)$, set
	\[
	\psi_\Gamma(f)\coloneqq (g_1^*(f),\dotsc,g_k^*(f))\in\ell_p^k.
	\]
	Then $\psi_\Gamma\colon \Ell_p(\mu)\to\ell_p^k$ is a contraction. If all $g_i$ are positive, then $\psi_\Gamma$ is positive.
\end{lemma}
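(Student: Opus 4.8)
The plan is to dispatch the three claims—linearity, positivity, and contractivity—in that order, with only the last requiring any actual work. Linearity of $\psi_\Gamma$ is immediate, each coordinate $f\mapsto g_i^*(f)$ being a linear functional. For positivity, if every $g_i$ is positive and $f\in\Ell_p(\mu)$ satisfies $f\geq 0$, then $g_i^*(f)=\int g_i f\,\mathrm{d}\mu\geq 0$ for every $i$, so $\psi_\Gamma(f)$ has nonnegative coordinates and hence lies in the positive cone of $\ell_p^k$. The entire content of the lemma is therefore the contractivity estimate, which I expect to follow directly from Hölder's inequality together with the disjointness hypothesis.

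For contractivity I would proceed in two steps. The structural input is that $g_ig_j=0$ for $i\neq j$ means, after fixing representatives, that the supports $S_i\coloneqq\{\pt:g_i(\pt)\neq 0\}$ are pairwise disjoint up to a $\mu$-null set. First, I would bound each coordinate in isolation: since $g_i$ vanishes off $S_i$, Hölder's inequality applied on $S_i$ together with the normalisation $\norm{g_i}_q\leq 1$ gives $\abs{g_i^*(f)}=\bigabs{\int_{S_i}g_if\,\mathrm{d}\mu}\leq\norm{g_i}_q\bigpars{\int_{S_i}\abs{f}^p\,\mathrm{d}\mu}^{1/p}\leq\bigpars{\int_{S_i}\abs{f}^p\,\mathrm{d}\mu}^{1/p}$ when $1\leq p<\infty$, and $\abs{g_i^*(f)}\leq\norm{f}_\infty$ when $p=\infty$. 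Second, I would assemble these bounds: for $1\leq p<\infty$, raising to the $p$-th power and summing over $i$, the pairwise disjointness of the $S_i$ collapses $\sum_{i=1}^k\int_{S_i}\abs{f}^p\,\mathrm{d}\mu$ into the single integral $\int_{\bigcup_i S_i}\abs{f}^p\,\mathrm{d}\mu\leq\norm{f}_p^p$; taking $p$-th roots yields $\norm{\psi_\Gamma(f)}_{\ell_p^k}\leq\norm{f}_p$. For $p=\infty$ one simply takes the maximum of the coordinate bounds, so disjointness is not even needed there.

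I do not anticipate a genuine obstacle. The single point demanding care is to route the disjointness hypothesis correctly, namely to phrase Hölder on each support $S_i$ so that, upon summing, disjointness is what turns $\sum_i\int_{S_i}\abs{f}^p\,\mathrm{d}\mu$ into $\int_{\bigcup_i S_i}\abs{f}^p\,\mathrm{d}\mu$ rather than into a quantity that could overshoot $\norm{f}_p^p$. The endpoint cases $p\in\{1,\infty\}$ warrant only a brief separate remark and introduce nothing new.
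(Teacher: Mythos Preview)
Your proof is correct but follows a different route from the paper's. You localise H\"older to each support $S_i$, obtaining $\abs{g_i^*(f)}^p\leq\int_{S_i}\abs{f}^p\,\mathrm{d}\mu$, and then sum using the disjointness of the $S_i$ to collapse the right-hand side into a single integral bounded by $\norm{f}_p^p$; the case $p=\infty$ is handled separately. The paper instead computes $\norm{\psi_\Gamma(f)}_{\ell_p^k}$ via duality with $\ell_q^k$: it writes $\norm{\psi_\Gamma(f)}_{\ell_p^k}=\max_{(a_1,\dots,a_k)\in\ball_{\ell_q^k}}\bigl(\sum_i a_i g_i\bigr)^{\!*}(f)$, observes that disjointness forces $\norm{\sum_i a_i g_i}_q\leq 1$ for every $(a_1,\dots,a_k)\in\ball_{\ell_q^k}$, and then applies a single global H\"older inequality. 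Your approach is more hands-on and avoids any appeal to duality, at the cost of a separate remark for $p=\infty$; the paper's argument is uniform in $p$ and packages the disjointness into a single norm computation on $\sum_i a_i g_i$. Both are short and neither is clearly superior.
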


\begin{proof}
	Using that the unit ball of $\ell_q^k$ is norming for $\ell_p^k$ in the second equality, we have
	\begin{align*}
		\norm{\psi_\Gamma(f)}&=\left(\sum_{i=1}^k\abs{g_i^*(f)}^p\right)^{1/p}\\
		&=\max_{(a_1,\dotsc,a_k)\in\ball_{\ell_q^k}}\sum_{i=1}^k a_ig_i^*(f)\\
		&=\max_{(a_1,\dotsc,a_k)\in\ball_{\ell_q^k}}\left(\sum_{i=1}^k a_ig_i\right)^*(f).
	\end{align*}
	The disjointness of the $g_i$ is easily seen to imply that $\norm{\sum_{i=1}^ka_ig_i}_q\leq 1$ whenever $(a_1,\dotsc,a_k)\in\ball_{\ell_q^k}$. An application of the H\"older inequality concludes the proof.
\end{proof}

We now come to the proof proper for $1\leq p<\infty$. We use $\chi_A$ for the characteristic function of a set $A$.

Let $\freeelt$ be an element of the vector sublattice of $\FBLppobs$ that is generated by $j(\bs)$.  By \cite[Exercise~4.1.8]{aliprantis_burkinshaw_POSITIVE_OPERATORS_SPRINGER_REPRINT:2006},  there exist $m,n$ and $x_{kl}\in\bs$ for $1\leq k\leq m$ and $1\leq l\leq n$ such that
\[
\freeelt=\bigvee_{k=1}^m\bigwedge_{l=1}^n j(x_{kl}).
\]
Let $\mor\colon\bs\to\Ell_p(\mu)$ be a positive contraction for a (not necessarily finite) measure $\mu$ on a set $\pset$; we need only the case where $\mu$ is a probability measure, but the arguments below are valid in general. Then
\[
\facmor(\freeelt)=\bigvee_{k=1}^m\bigwedge_{l=1}^n \mor(x_{kl}).
\]
For $k^\prime=1,\dotsc,m$, set
\[
\widetilde A_{k^\prime}\coloneqq\left\{\pt\in\pset: \bigvee_{k=1}^m\bigwedge_{l=1}^n [\mor(x_{kl})](\pt)=\bigwedge_{l=1}^n [\mor(x_{k^\prime l})](\pt)\right\}.
\]
Then $\pset=\bigcup_{k^\prime=1}^m \widetilde A_{k^\prime}$. For $k^\prime=1,\dotsc,m$, set $A_{k^\prime}=\widetilde A_{k^\prime}\setminus\bigcup_{k<k^\prime}\widetilde A_{k}$, so that $\pset=\bigcup_{k^\prime=1}^m A_k^\prime$ as a disjoint union and
\[
A_{k^\prime}\subseteq\left\{\pt\in\pset: \bigvee_{k=1}^m\bigwedge_{l=1}^n [\mor(x_{kl})](\pt)=\bigwedge_{l=1}^n [\mor(x_{k^\prime l})](\pt)\right\}.
\]
Proceeding similarly for $\bigwedge_{l=1}^n \mor(x_{k^\prime l})$ on $A_{k^\prime}$, we obtain a disjoint union $A_{k^\prime}=\bigcup_{l^\prime=1}^n B_{k^\prime l^\prime}$ such that
\[
B_{k^\prime l^\prime}\subseteq\left\{\pt\in\pset: \bigwedge_{l=1}^n [\mor(x_{k^\prime l})](\pt)= [\mor(x_{k^\prime l^\prime})](\pt)\right\}.
\]
Finally, set
\[
B_{k^\prime l^\prime}^+=\left\{\pt\in B_{k^\prime l^\prime}: \bigvee_{k=1}^m\bigwedge_{l=1}^n [\mor(x_{kl})](\pt)\geq 0\right\}
\]
and
\[
B_{k^\prime l^\prime}^-=\left\{\pt\in B_{k^\prime l^\prime}: \bigvee_{k=1}^m\bigwedge_{l=1}^n [\mor(x_{kl})](\pt)< 0\right\},
\]
so that $\pset$ is the disjoint union of the $B_{k^\prime l^\prime}^+$ and $B_{k^\prime l^\prime}^-$. For all $k^\prime$ and $l^\prime$, choose a positive $g_{k^\prime l^\prime}^+\in\Ell_q(\mu)$ with $\norm{g_{k^\prime l^\prime}^+}\leq 1$ which is zero outside $B_{k^\prime l^\prime}^+$ and such that
\[
\left(g_{k^\prime l^\prime}^+\right)^*(\facmor(\freeelt))=\lrnorm{\chi_{B_{k^\prime l^\prime}^+}\facmor(\freeelt)}_{\Ell_p(\mu)},
\]
and a positive
$g_{k^\prime l^\prime}^-\in\Ell_q(\mu)$ with $\norm{g_{k^\prime l^\prime}^-}\leq 1$ which is zero outside $B_{k^\prime l^\prime}^-$ and such that
\[
\lrabs{\left(g_{k^\prime l^\prime}^-\right)^*(\facmor(\freeelt))}=\lrnorm{\chi_{B_{k^\prime l^\prime}^-}\facmor(\freeelt)}_{\Ell_p(\mu)}.
\]
From the disjointness of the union, we have
\begin{align}\label{eq:norm_splits}
\norm{\facmor(\freeelt)}_{\Ell_p(\mu)}^p&=\sum_{k^\prime=1}^m\sum_{l^\prime=1}^n \left(\lrnorm{\chi_{B_{k^\prime l^\prime}^+}\facmor(\freeelt)}_{\Ell_p(\mu)}^p + \lrnorm{\chi_{B_{k^\prime l^\prime}^-}\facmor(\freeelt)}_{\Ell_p(\mu)}^p\right)\notag\\
&=\sum_{k^\prime=1}^m\sum_{l^\prime=1}^n\left(\lrabs{\left(g_{k^\prime l^\prime}^+\right)^*(\facmor(\freeelt))}^p+\lrabs{\left(g_{k^\prime l^\prime}^-\right)^*(\facmor(\freeelt))}^p\right).
\end{align}
The point of all this is that, by a double application of \cref{res:trivial_and_crucial} in the second equality,
\begin{align*}
\left(g_{k^\prime l^\prime}^\pm\right)^*(\facmor(\freeelt))&=\left(g_{k^\prime l^\prime}^\pm\right)^*\left(\bigvee_{k=1}^m\bigwedge_{l=1}^n \mor(x_{kl})\right)\\
&= \bigvee_{k=1}^m\bigwedge_{l=1}^n \left(g_{k^\prime l^\prime}^\pm\right)^* [\mor(x_{kl})]\\
&= \bigvee_{k=1}^m\bigwedge_{l=1}^n \left(\left(g_{k^\prime l^\prime}^\pm\right)^*\circ\mor\right) (x_{kl})\\
&= \bigvee_{k=1}^m\bigwedge_{l=1}^n \fac{\left(\left(g_{k^\prime l^\prime}^\pm\right)^*\circ\mor\right)} (j(x_{kl}))\\
&= \fac{\left(\left(g_{k^\prime l^\prime}^\pm\right)^*\circ\mor\right)} \left(\bigvee_{k=1}^m\bigwedge_{l=1}^n j(x_{kl})\right)\\
&=\fac{\left(\left(g_{k^\prime l^\prime}^\pm\right)^*\circ\mor\right)}(\freeelt).
\end{align*}
Thus \eqref{eq:norm_splits} gives that
\begin{equation}\label{eq:interpretable_expression}
\norm{\facmor(\freeelt)}_{\Ell_p(\mu)}^p=\sum_{k^\prime=1}^m\sum_{l^\prime=1}^n\left(\lrabs{\fac{\left(\left(g_{k^\prime l^\prime}^+\right)^*\circ\mor\right)}(\freeelt)}^p+\lrabs{\fac{\left(\left(g_{k^\prime l^\prime}^-\right)^*\circ\mor\right)}(\freeelt)}^p\right).
\end{equation}
We shall now interpret this. In view of \cref{res:disjoint_functions_yield_contraction}, the set
\[
\Gamma\coloneqq\left\{\left(g_{k^\prime l^\prime}^\pm\right)^*:1\leq k^\prime\leq m,1\leq l^\prime\leq n\right\}
\]
yields a positive contraction $\psi_\Gamma$ from $\Ell_p(\mu)$ into $\ell_p^{2mn}$. Then $\psi_\Gamma\circ\mor\colon\bs\to\ell_p^{2mn}$ is a positive contraction, the coordinate components of which are the  $\left(g_{k^\prime l^\prime}^\pm\right)^*\circ\mor$. By the uniqueness of the factor morphism, the coordinate components of $\fac{\psi_\Gamma\circ\mor}$ are the $\fac{\left(g_{k^\prime l^\prime}^\pm\right)^*\circ\mor}$. Thus \eqref{eq:interpretable_expression} states that \[
\norm{\facmor(\freeelt)}_{\Ell_p(\mu)}^p=\norm{\fac{\left(\psi_\Gamma\circ\mor\right)}(\freeelt)}_{\ell_p^{2mn}}^p.
\]

Combining this with \eqref{eq:reduction_to_L_p}, we have established the following.\footnote{For elements $\freeelt$ of the form $\bigvee_{k=1}^m j(x_j)-\bigvee_{l=1}^n j(y_l)$ as in \cite{jardon-sanchez_laustsen_taylor_tradacete_troitsky:2022}, the norm is obtained from the positive contractions into $\ell_p^{2mn}$.}

\begin{proposition}\label{res:norm_on_FVL_specifici}
For $\freeelt=\bigvee_{k=1}^m\bigwedge_{l=1}^n j(x_{kl})$ in $\FBLppobs$, its norm is given by
\[
\norm{\freeelt}=\sup_{\mor}\norm{\facmor(\freeelt)},
\]
where $\mor$ runs over the positive contractions from $\bs$ into $\ell_p^{2mn}$.
\end{proposition}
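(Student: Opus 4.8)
The plan is to prove the two inequalities separately, writing $K\coloneqq\sup_\mor\norm{\facmor(\freeelt)}$ for the right-hand supremum over positive contractions $\mor\colon\bs\to\ell_p^{2mn}$.

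First I would dispose of $K\leq\norm{\freeelt}$. Each $\ell_p^{2mn}$ is an $\Ell_p$-space over a finite set, hence $p$-convex with $p$-convexity constant $1$, so it is $\ct_p$-convex. The positive contractions into $\ell_p^{2mn}$ thus form a subclass of those over which the general norm formula \eqref{eq:norm_formula_general} of \cref{res:existence_of_FBLC} is computed; restricting that supremum to a smaller class can only decrease it, which gives $K\leq\norm{\freeelt}$ immediately.

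The substance is the reverse inequality $\norm{\freeelt}\leq K$, and here I would begin from \eqref{eq:reduction_to_L_p}, which already expresses $\norm{\freeelt}$ as a supremum of $\norm{\facmor(\freeelt)}$ over positive contractions $\mor\colon\bs\to\Ell_p(\mu)$ for probability measures $\mu$. Fixing one such $\mor$, the goal is to manufacture from it a single positive contraction into $\ell_p^{2mn}$ that attains the \emph{same} value, and this is precisely what the construction preceding the statement accomplishes: one partitions $\pset$ according to which branch of $\bigvee_k\bigwedge_l[\mor(x_{kl})]$ is attained and according to its sign, producing the disjoint pieces $B_{k^\prime l^\prime}^\pm$, and then chooses norming functionals $g_{k^\prime l^\prime}^\pm\in\Ell_q(\mu)$ supported on them. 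On each piece the sup-inf collapses to a single term, so a double application of \cref{res:trivial_and_crucial} lets every functional commute with $\bigvee\bigwedge$, giving $\bigl(g_{k^\prime l^\prime}^\pm\bigr)^*(\facmor(\freeelt))=\fac{\bigl(\bigl(g_{k^\prime l^\prime}^\pm\bigr)^*\circ\mor\bigr)}(\freeelt)$. Collecting these functionals into $\Gamma$ and invoking \cref{res:disjoint_functions_yield_contraction} assembles them into one positive contraction $\psi_\Gamma\circ\mor\colon\bs\to\ell_p^{2mn}$ with $\norm{\facmor(\freeelt)}_{\Ell_p(\mu)}=\norm{\fac{(\psi_\Gamma\circ\mor)}(\freeelt)}_{\ell_p^{2mn}}\leq K$. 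Taking the supremum over $\mor$ and using \eqref{eq:reduction_to_L_p} then yields $\norm{\freeelt}\leq K$, and the two inequalities together give the claim.

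I expect the only genuine obstacle to reside in this reverse inequality, namely in the observation that $\norm{\facmor(\freeelt)}$ for an infinite-dimensional $\Ell_p(\mu)$-contraction can be recovered \emph{exactly}, not just approximately, from a mere $2mn$ scalar functionals. Everything turns on the disjointness engineered by the partition $\{B_{k^\prime l^\prime}^\pm\}$ in combination with \cref{res:trivial_and_crucial}, which is what permits each functional to be pulled through the finite sup-inf. Once that commutation is in hand, the assembly into a single contraction via \cref{res:disjoint_functions_yield_contraction} and the final passage to the supremum are routine.
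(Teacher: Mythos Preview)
Your proposal is correct and follows essentially the same route as the paper: the proposition is stated immediately after the construction you outline, and the paper's proof is precisely that construction combined with \eqref{eq:reduction_to_L_p}. The only cosmetic difference is that you split the argument explicitly into the two inequalities $K\leq\norm{\freeelt}$ and $\norm{\freeelt}\leq K$, whereas the paper leaves the first tacit.
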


Now the work has been done. Evidently, if $\freeelt$ is any element of the vector sublattice of $\FBLppobs$ that is generated by $j(\bs)$, then
\[
\norm{\freeelt}=\sup_{\mor}\norm{\facmor(\freeelt)}
\]
as $\mor$ runs over the positive contractions from $\bs$ into $\ell_p^n$ for $n\geq 1$. On invoking \cref{res:norming}, we have the following.

\begin{theorem}\label{res:norm_p-convex}
Let $1\leq p<\infty$, and let $\pobs$ be a pre-ordered Banach space. For $\freeelt\in\FBLppobs$, its norm is given by
\begin{equation}\label{eq:norm_p-convex}	
\norm{\freeelt}=\sup_{\mor}\norm{\facmor(\freeelt)},
\end{equation}
where $\mor$ runs over the positive contractions from $\bs$ into $\ell_p^n$ for $n\geq 1$.	
\end{theorem}

Using \eqref{eq:norm_p-convex}, we shall now realise $\FBLppobs$ as a function lattice on $\ballbsdualpos$ as in \cref{res:injectivity_of_canonical_map}.

The positive contractions $\mor\colon\bs\to\ell_p^n$ correspond to subsets $\{x_1^*,\dotsc,x_n^*\}$ of $\ballbsdualpos$ such that
\[
\sup_{x\in\ballbs}\sum_{i=1}^n\abs{x_i^*(x)}^p\leq 1,
\]
in which case the $\fac\mor$ are easily seen to be given by $\{\fac{x_1^*},\dotsc,x\fac{_n^*}\}$, as a consequence of the uniqueness of a factoring morphism. Hence

\begin{equation*}\label{eq:norm_formula_p}
\norm{\freeelt}\!=\!\sup\left\{\!\left(\sum_{i=1}^n\lrabs{\fac{ x_i^*}(\freeelt)}^p\!\right)^{1/p}\!\!\!\!\!\!: n\geq 1, x_1^*,\dotsc,x_n^*\in\ballbsdualpos \! \text{ s.t.\ }\!\!\sup_{x\in\ballbs}\!\sum_{i=1}^n\abs{x_i^*(x)}^p\leq 1 \!\right\}\!.
\end{equation*}
 For $f\in\contballposh$, define $\norm{f}_p\in[0,\infty]$ by setting
\[
\norm{f}_p\!\!\coloneqq\sup\!\left\{\!\!\left(\sum_{i=1}^n\lrabs{f(x_i^*)}^p\!\!\right)^{1/p}\!\!\!\!\!\!\!\!\!\!: n\geq 1, x_1^*,\dotsc,x_n^*\in\ballbsdualpos\!\!  \text{ s.t.\ }\!\!\sup_{x\in\ballbs}\!\sum_{i=1}^n\abs{x_i^*(x)}^p\!\leq \!1\! \right\}\!.
\]
Set
\[
\contballposhp\coloneqq\left\{f\in\contballposh:\norm{f}_p<\infty\right\}.
\]
This is a vector lattice, and it is routine to verify that it is a Banach lattice when supplied with the vector lattice norm $\norm{\,\cdot\,}_p$.

For $\freeelt\in\FBLppobs$, define $\widehat\freeelt\colon\ballbsdualpos\to\RR$ by setting
\[
\widehat \freeelt (x^*)\coloneqq \fac{x^*}(\freeelt).
\]
Then $\widehat{j(x)}=\delta_x$ for $x\in\bs$. Recalling that $\FBLinfpobs$ is generated by $j(\bs)$, \cref{res:norm_p-convex} yields the following.

\begin{theorem}\label{res:p-realisation}
	Let $\pobs$ be a pre-ordered Banach space. Let $E$ be the closed vector sublattice of the Banach lattice $\contballposhp$ that is generated by $\{\delta_x:x\in\bs\}$. Define $j\colon\bs\to\bl$ by setting $j(x)\coloneqq\delta_x$. Then $(j,\bl)$ is a free $p$-convex Banach lattice with $p$-convexity constant 1 over $\pobs$.
\end{theorem}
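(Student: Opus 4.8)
The plan is to exhibit the canonical map $\freeelt\mapsto\widehat\freeelt$ of \eqref{eq:transport} as an isometric vector lattice isomorphism from the abstractly existing free object $\FBLppobs$ (which we have by \cref{res:existence_of_FBLC}, taking $\ct=\ct_p$) onto $E$, carrying its canonical positive contraction to the map $j\colon x\mapsto\delta_x$. Once this is in place, $(j,E)$ inherits the universal property by transport of structure, and is therefore itself free.

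First I would verify the hypothesis in part~\ref{part:injectivity_of_canonical_map_2} of \cref{res:injectivity_of_canonical_map}. By \cref{res:norm_p-convex}, for every $\freeelt\in\FBLppobs$ we have $\norm{\freeelt}=\sup_\mor\norm{\facmor(\freeelt)}$ as $\mor$ runs over the positive contractions $\bs\to\ell_p^n$ with $n\geq 1$. The spaces $\ell_p^n$ are finite dimensional, and a norming family of factoring morphisms is in particular separating; hence \cref{res:injectivity_of_canonical_map} applies and $\freeelt\mapsto\widehat\freeelt$ is an injective vector lattice homomorphism from $\FBLppobs$ into $\contballposh$.

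Next I would pin down the norm. Since $\widehat\freeelt(x^*)=\fac{x^*}(\freeelt)$, the formula \eqref{eq:norm_formula_p} for $\norm{\freeelt}$ is, verbatim, the definition of $\norm{\widehat\freeelt}_p$; thus $\norm{\freeelt}=\norm{\widehat\freeelt}_p$ for all $\freeelt$. In particular $\widehat\freeelt\in\contballposhp$, so the map takes values in $\contballposhp$ and is isometric there. Because $\fac{x^*}(j(x))=x^*(x)=\delta_x(x^*)$ we have $\widehat{j(x)}=\delta_x$, and since $j(\bs)$ generates $\FBLppobs$ as a Banach lattice (again by \cref{res:existence_of_FBLC}), the image of this isometry is the closed sublattice of $\contballposhp$ generated by the $\delta_x$, that is, $E$. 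The image is closed because it is the isometric image of the complete lattice $\FBLppobs$; this is the point at which completeness of $\FBLppobs$, rather than of $\contballposhp$, does the work, in the spirit of \cref{rem:completeness_not_needed}.

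Finally I would conclude that $\freeelt\mapsto\widehat\freeelt$ is an isometric vector lattice isomorphism of $\FBLppobs$ onto $E$ intertwining the canonical embedding of $\bs$ with $j$, so that $(j,E)$ is a free $p$-convex Banach lattice with $p$-convexity constant $1$ over $\pobs$. I expect no genuine obstacle here, the substance having been discharged in \cref{res:norm_p-convex}; the only points needing care are the bookkeeping match between \eqref{eq:norm_formula_p} and the definition of $\norm{\,\cdot\,}_p$, and the verification that the image is \emph{exactly} the closed sublattice $E$, which follows from the density of the sublattice generated by the $\delta_x$ together with the isometry.
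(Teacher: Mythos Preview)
Your proposal is correct and follows essentially the same route as the paper: the text preceding the theorem establishes \cref{res:norm_p-convex} and the norm formula \eqref{eq:norm_formula_p}, observes that this matches the definition of $\norm{\,\cdot\,}_p$ on $\contballposhp$, and then invokes \cref{res:injectivity_of_canonical_map} to conclude. You have simply made explicit the bookkeeping (injectivity, isometry, and identification of the image with $E$) that the paper leaves to the reader in the single sentence ``Then \cref{res:injectivity_of_canonical_map} yields the following.''
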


\begin{remark}\label{rem:completeness_not_needed_p}Analogously to \cref{rem:completeness_not_needed_p}, the vector sublattice of $\bl$ that is generated by $\{\delta_x:x\in\bs\}$ is a free vector lattice over the pre-ordered vector space $\pobsclosure$, and the completeness of $\contballposhp$ is not needed to know that $\bl$ is complete.
\end{remark}


\subsection*{Acknowledgements}
The authors are indebted to Jochen Gl\"{u}ck for pointing out the relevance of the normality of the positive cone in \cref{sec:free_banach_lattices_over_pre-ordered_banach_spaces}, and to the anonymous referees for their careful reading of the manuscript. This research was supported by the Erasmus+ ICM programme, the National Natural Science Foundation of China (Grant No.\ 12201439), and the Natural Science Foundation of Sichuan Province (Grant No.\ 2024NSFSC1339).


\bibliographystyle{plain}
\urlstyle{same}

\bibliography{general_bibliography}

@preamble{
	"\def\cprime{$'$} "
# 	"\def\lfhook#1{\setbox0=\hbox{#1}{\ooalign{\hidewidth\lower1.5ex\hbox{'}\hidewidth\crcr\unhbox0}}}"
}

@article {adamek_herrlich_strecker_ABSTRACT_AND_CONCRETE_CATEGORIES_THE_JOY_OF_CATS:2006,
	AUTHOR = {Ad\'{a}mek, J. and Herrlich, H. and Strecker, G.E.},
	TITLE = {Abstract and concrete categories: the joy of cats},
	NOTE = {Reprint of the 1990 original [Wiley, New York]},
	JOURNAL = {Repr. Theory Appl. Categ.},
	FJOURNAL = {Reprints in Theory and Applications of Categories},
	NUMBER = {17},
	YEAR = {2006},
	PAGES = {1--507},
	MRCLASS = {18-02 (08C05 18A20 18B30 54B30)},
	MRNUMBER = {2240597},
}

@book {albiac_kalton_TOPICS_IN_BANACH_SPACE_THEORY:2006,
    AUTHOR = {Albiac, F. and Kalton, N.J.},
     TITLE = {Topics in {B}anach space theory},
    SERIES = {Graduate Texts in Mathematics},
    VOLUME = {233},
 PUBLISHER = {Springer, New York},
      YEAR = {2006},
     PAGES = {xii+373},
      ISBN = {978-0387-28141-4; 0-387-28141-X},
   MRCLASS = {46B20 (46-01)},
  MRNUMBER = {2192298},
MRREVIEWER = {Gilles Godefroy},
}

@book {aliprantis_burkinshaw_POSITIVE_OPERATORS_SPRINGER_REPRINT:2006,
    AUTHOR = {Aliprantis, C.D. and Burkinshaw, O.},
     TITLE = {Positive operators},
      NOTE = {Reprint of the 1985 original},
 PUBLISHER = {Springer, Dordrecht},
      YEAR = {2006},
     PAGES = {xx+376},
      ISBN = {978-1-4020-5007-7; 1-4020-5007-0},
   MRCLASS = {47B60 (46-01 46A40 46B42 47-01 47B65)},
  MRNUMBER = {2262133},
       DOI = {10.1007/978-1-4020-5008-4},
       URL = {http://dx.doi.org/10.1007/978-1-4020-5008-4},
}

@book {aliprantis_tourky_CONES_AND_DUALITY:2007,
    AUTHOR = {Aliprantis, C.D. and Tourky, R.},
     TITLE = {Cones and duality},
    SERIES = {Graduate Studies in Mathematics},
    VOLUME = {84},
 PUBLISHER = {American Mathematical Society, Providence, RI},
      YEAR = {2007},
     PAGES = {xiv+279},
      ISBN = {978-0-8218-4146-4},
   MRCLASS = {46A40 (46B40 46N10 47B60)},
  MRNUMBER = {2317344},
MRREVIEWER = {Nicolae Popovici},
       DOI = {10.1090/gsm/084},
       URL = {http://dx.doi.org/10.1090/gsm/084},
}

@article {aviles_rodriguez_tradacete:2018,
	AUTHOR = {Avil\'{e}s, A. and Rodr\'{\i}guez, J. and Tradacete, P.},
	TITLE = {The free {B}anach lattice generated by a {B}anach space},
	JOURNAL = {J. Funct. Anal.},
	FJOURNAL = {Journal of Functional Analysis},
	VOLUME = {274},
	YEAR = {2018},
	NUMBER = {10},
	PAGES = {2955--2977},
	ISSN = {0022-1236},
	MRCLASS = {46B42 (46B50)},
	MRNUMBER = {3777636},
	MRREVIEWER = {Mohammad Sal Moslehian},
	DOI = {10.1016/j.jfa.2018.03.001},
	URL = {https://doi-org.ezproxy.leidenuniv.nl:2443/10.1016/j.jfa.2018.03.001},
}

@article {baker:1968,
	AUTHOR = {Baker, K.A.},
	TITLE = {Free vector lattices},
	JOURNAL = {Canadian J. Math.},
	FJOURNAL = {Canadian Journal of Mathematics. Journal Canadien de
	Math\'{e}matiques},
	VOLUME = {20},
	YEAR = {1968},
	PAGES = {58--66},
	ISSN = {0008-414X},
	MRCLASS = {06.80 (46.00)},
	MRNUMBER = {224524},
	MRREVIEWER = {W. C. Holland},
	DOI = {10.4153/CJM-1968-008-x},
	URL = {https://doi-org.ezproxy.leidenuniv.nl:2443/10.4153/CJM-1968-008-x},
}

@article {birkhoff:1942,
	AUTHOR = {Birkhoff, G.},
	TITLE = {Lattice-ordered groups},
	JOURNAL = {Ann. of Math. (2)},
	FJOURNAL = {Annals of Mathematics. Second Series},
	VOLUME = {43},
	YEAR = {1942},
	PAGES = {298--331},
	ISSN = {0003-486X},
	MRCLASS = {20.0X},
	MRNUMBER = {6550},
	MRREVIEWER = {H. Wallman},
	DOI = {10.2307/1968871},
	URL = {https://doi-org.ezproxy.leidenuniv.nl:2443/10.2307/1968871},
}

@article {bleier:1973,
	AUTHOR = {Bleier, R.D.},
	TITLE = {Free vector lattices},
	JOURNAL = {Trans. Amer. Math. Soc.},
	FJOURNAL = {Transactions of the American Mathematical Society},
	VOLUME = {176},
	YEAR = {1973},
	PAGES = {73--87},
	ISSN = {0002-9947},
	MRCLASS = {06A65 (06A60 46A40)},
	MRNUMBER = {311541},
	MRREVIEWER = {P. F. Conrad},
	DOI = {10.2307/1996197},
	URL = {https://doi-org.ezproxy.leidenuniv.nl:2443/10.2307/1996197},
}

@unpublished{de_jeu_UNPUBLISHED:2020b,
	author = {de Jeu, M.},
	title = {Free vector lattices over vector spaces as function lattices},
	NOTE = {Preprint, 2020. To appear in Birkhauser’s Trends in Mathematics series in ``Ordered Structures with Applications in Economics and Finance'' (M.A. Ben Amor and B.A. Watson, Eds.); proceedings of the
	online ``Conference on Ordered Structures with Applications in Economy and Finance'', May 3–7, 2021.
	Available at \url{https://arxiv.org/pdf/2005.01978.pdf}.},
	YEAR = { },
}

@incollection{de_jeu:2021,
	AUTHOR = {de Jeu, M.},
	TITLE = {Free vector lattices and free vector lattice algebras},
	BOOKTITLE = {Positivity and its applications, {B}irkh\"{a}user/{S}pringer, {C}ham},
	EDITOR = {Kikianty, E. and Mokhwetha, M. and Messerschmidt, M. and van der Walt, J.H. and Wortel, M.},
	NOTE = {Proceedings of the Positivity X conference, 8--12 July 2019, Pretoria, South Africa},
	SERIES = {Trends Math.},
	PAGES = {103--139},
	PUBLISHER = {Birkh\"{a}user/Springer, Cham},
	YEAR = {2021},
}

@article {de_jeu_messerschmidt:2014,
    AUTHOR = {de Jeu, M. and Messerschmidt, M.},
     TITLE = {A strong open mapping theorem for surjections from cones onto
              {B}anach spaces},
   JOURNAL = {Adv. Math.},
  FJOURNAL = {Advances in Mathematics},
    VOLUME = {259},
      YEAR = {2014},
     PAGES = {43--66},
      ISSN = {0001-8708},
   MRCLASS = {46A30 (15B48 46B20 49J53)},
  MRNUMBER = {3197651},
MRREVIEWER = {Rodica-Mihaela D{\u{a}}ne{\c{t}}},
       DOI = {10.1016/j.aim.2014.03.008},
       URL = {http://dx.doi.org/10.1016/j.aim.2014.03.008},
}

@article {de_pagter_wickstead:2015,
    AUTHOR = {de Pagter, B. and Wickstead, A.W.},
     TITLE = {Free and projective {B}anach lattices},
   JOURNAL = {Proc. Roy. Soc. Edinburgh Sect. A},
  FJOURNAL = {Proceedings of the Royal Society of Edinburgh. Section A.
              Mathematics},
    VOLUME = {145},
      YEAR = {2015},
    NUMBER = {1},
     PAGES = {105--143},
      ISSN = {0308-2105},
   MRCLASS = {46B42},
  MRNUMBER = {3304578},
MRREVIEWER = {Rados{\l}aw Kaczmarek},
       DOI = {10.1017/S0308210512001709},
       URL = {http://dx.doi.org/10.1017/S0308210512001709},
}

@book {dixmier_C-STAR-ALGEBRAS_ENGLISH_NORTH_HOLLAND_EDITION:1977,
    AUTHOR = {Dixmier, J.},
     TITLE = {{${\btfs C}\sp{\ast}$}\!-algebras},
      SERIES = {North-Holland Mathematical Library},
    VOLUME = {15},
 PUBLISHER = {North-Holland Publishing Co., Amsterdam-New York-Oxford},
      YEAR = {1977},
     PAGES = {xiii+492},
      ISBN = {0-7204-0762-1},
   MRCLASS = {46L05},
  MRNUMBER = {0458185 (56 \#16388)},
}

@book {jacobson_BASIC_ALGEBRA_II_SECOND_EDITION:1989,
	AUTHOR = {Jacobson, N.},
	TITLE = {Basic algebra. {II}},
	EDITION = {Second edition},
	PUBLISHER = {W.H. Freeman and Company, New York},
	YEAR = {1989},
	PAGES = {xviii+686},
	ISBN = {0-7167-1933-9},
	MRCLASS = {00A05 (12-01 15-01 16-01)},
	MRNUMBER = {1009787},
	MRREVIEWER = {P. M. Cohn},
}

@article {jardon-sanchez_laustsen_taylor_tradacete_troitsky:2022,
	AUTHOR = {Jard\'{o}n-S\'{a}nchez, H. and Laustsen, N.J. and Taylor, M.A. and Tradacete, P. and Troitsky, V.G.},
	TITLE = {Free {B}anach lattices under convexity conditions},
	JOURNAL = {Rev. R. Acad. Cienc. Exactas F\'{\i}s. Nat. Ser. A Mat. RACSAM},
	FJOURNAL = {Revista de la Real Academia de Ciencias Exactas, F\'{\i}sicas y
	Naturales. Serie A. Matematicas. RACSAM},
	VOLUME = {116},
	YEAR = {2022},
	NUMBER = {1},
	PAGES = {Paper No. 15},
	ISSN = {1578-7303},
	MRCLASS = {46B42 (06B25 46A40 47B60)},
	MRNUMBER = {4326041},
	DOI = {10.1007/s13398-021-01155-8},
	URL = {https://doi-org.ezproxy.leidenuniv.nl/10.1007/s13398-021-01155-8},
}

@article {lausten_troitsky:2020,
	AUTHOR = {Laustsen, N.J. and Troitsky, V.G.},
	TITLE = {Vector lattices admitting a positively homogeneous continuous
	function calculus},
	JOURNAL = {Q. J. Math.},
	FJOURNAL = {The Quarterly Journal of Mathematics},
	VOLUME = {71},
	YEAR = {2020},
	NUMBER = {1},
	PAGES = {281--294},
	ISSN = {0033-5606,1464-3847},
	MRCLASS = {46B42 (06F20)},
	MRNUMBER = {4077195},
	MRREVIEWER = {Rodica-Mihaela\ D\u ane\c t},
	DOI = {10.1093/qmathj/haz031},
	URL = {https://doi.org/10.1093/qmathj/haz031},
}

@book {lindenstrauss_tzafriri_CLASSICAL_BANACH_SPACES_VOLUME_II_FUNCTION_SPACES:1979,
    AUTHOR = {Lindenstrauss, J. and Tzafriri, L.},
     TITLE = {Classical {B}anach spaces. {II}. {F}unction spaces},
    SERIES = {Ergebnisse der Mathematik und ihrer Grenzgebiete},
    VOLUME = {97},
 PUBLISHER = {Springer-Verlag, Berlin-New York},
      YEAR = {1979},
     PAGES = {x+243},
      ISBN = {3-540-08888-1},
   MRCLASS = {46-02 (46B30 46E30 46M05)},
  MRNUMBER = {540367},
MRREVIEWER = {A. C. Zaanen},
}

@article{oikhberg_taylor_tradacete_troitsky:2024,
	author = {Oikhberg, T. and Taylor, M.A. and Tradacete, P. and Troitsky, V.G.},
	title = {Free {B}anach lattices},
	JOURNAL = {J. Eur. Math. Soc. (2024), published online first},
	year={ },
}

@book {rudin_FUNCTIONAL_ANALYSIS_SECOND_EDITION:1991,
    AUTHOR = {Rudin, W.},
     TITLE = {Functional analysis},
    SERIES = {International Series in Pure and Applied Mathematics},
   EDITION = {Second edition},
 PUBLISHER = {McGraw-Hill, Inc., New York},
      YEAR = {1991},
     PAGES = {xviii+424},
      ISBN = {0-07-054236-8},
   MRCLASS = {46-01 (47-01)},
  MRNUMBER = {1157815},
}

@article {troitsky:2019,
	AUTHOR = {Troitsky, V.G.},
	TITLE = {Simple constructions of {$\btfs{FBL}(A)$} and {$\btfs{FBL}[E]$}},
	JOURNAL = {Positivity},
	FJOURNAL = {Positivity. An International Mathematics Journal Devoted to
	Theory and Applications of Positivity},
	VOLUME = {23},
	YEAR = {2019},
	NUMBER = {5},
	PAGES = {1173--1178},
	ISSN = {1385-1292},
	MRCLASS = {46B42 (46A40)},
	MRNUMBER = {4011244},
	DOI = {10.1007/s11117-019-00657-5},
	URL = {https://doi-org.ezproxy.leidenuniv.nl:2443/10.1007/s11117-019-00657-5},
}

@phdthesis{van_amstel_THESIS:2023,
	AUTHOR = {van Amstel, W.},
	TITLE = {Applications of direct and inverse limits in analysis},
	SCHOOL = {University of Pretoria},
	YEAR = {2023},
	ADDRESS = {Pretoria}
}

@article {weinberg:1963,
	AUTHOR = {Weinberg, E.C.},
	TITLE = {Free lattice-ordered abelian groups},
	JOURNAL = {Math. Ann.},
	FJOURNAL = {Mathematische Annalen},
	VOLUME = {151},
	YEAR = {1963},
	PAGES = {187--199},
	ISSN = {0025-5831},
	MRCLASS = {06.78 (20.00)},
	MRNUMBER = {153759},
	MRREVIEWER = {P. F. Conrad},
	DOI = {10.1007/BF01398232},
	URL = {https://doi-org.ezproxy.leidenuniv.nl/10.1007/BF01398232},
}



\end{document}